\theoremstyle{plain}
\newtheorem{theorem}{Theorem}[section]
\newtheorem{proposition}[theorem]{Proposition}
\newtheorem{lemma}[theorem]{Lemma}
\newtheorem{corollary}[theorem]{Corollary}
\theoremstyle{break}
\newtheorem{remark}[theorem]{Remark}
\newtheorem{example}{Example}
\newtheorem{proof}{Proof}
\newcommand{\myemail}[1]{\indent \emph{E-mail:} {\tt #1}}
\newcommand{\myaddress}[1]{\indent {\sc #1}\par}
\title{On the $\eta$-inverted sphere}
\author{Oliver R\"ondigs}
\date{January 8, 2018}
\begin{document}
\maketitle

\begin{abstract}
The first and second homotopy groups
of the $\eta$-inverted sphere spectrum over a field
of characteristic not two are zero.
A cell presentation of higher Witt theory is given as well, at
least over the complex numbers.
\end{abstract}
\section{Introduction}
\label{section:introduction}

Let $\eta\colon \mathbf{A}^2\smallsetminus\{0\}\to \mathbf{P}^1$ denote the first
algebraic Hopf map given by sending a nonzero pair $(x,y)$ to the line it generates in the plane. It induces an element of the same name
$\eta\in \pi_{1,1}\mathbf{1}$ in the motivic stable
homotopy groups of spheres. Hence $\eta$ acts on the
motivic stable homotopy groups of the sphere spectrum, in fact
of any motivic spectrum. Inverting this
action produces trivial 
groups in degrees one and two for the sphere spectrum.

\begin{theorem}\label{thm:introduction}
  Let $F$ be a field of characteristic not two. Then
  \[ \pi_{n+1,n}\mathbf{1}[\tfrac{1}{\eta}] = \pi_{n+2,n}\mathbf{1}[\tfrac{1}{\eta}] = 0\] for every integer $n$.
\end{theorem}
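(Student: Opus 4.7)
The plan is to use a slice-type spectral sequence applied to the $\eta$-inverted sphere spectrum $\mathbf{1}[\tfrac{1}{\eta}]$. As an anchor, Morel's computation of the zeroth Milnor--Witt stem already gives $\pi_{n,n}\mathbf{1}[\tfrac{1}{\eta}] \cong W(F)$, the Witt ring of $F$. The theorem asks for vanishing one and two degrees above this diagonal, so the task is to show that no further contributions survive in those specific bidegrees $(n+1,n)$ and $(n+2,n)$.

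First I would set up a convergent filtration on $\mathbf{1}[\tfrac{1}{\eta}]$ whose associated graded is accessible. Naively applying $\eta$-inversion to Voevodsky's slice filtration is problematic, because the classical slices $s_q\mathbf{1}$ (by the theorem of Levine and R\"ondigs--\O stv\ae r) are built from $H\mathbb{Z}$-module spectra, and $\eta$ acts as zero on $H\mathbb{Z}$. The remedy is an $\eta$-adapted filtration (for instance the very effective slice filtration, or an $\eta$-Bockstein construction) whose graded pieces are Eilenberg--MacLane-type spectra with coefficients in the fundamental ideal sheaves $I^q \subset W$, so that the $E_2$-page of the resulting spectral sequence is expressed in terms of the Witt cohomology $H^p(F; I^q)$ of the base field.

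Second, I would read off the $E_2$-page in the two bidegrees of interest. The entries are sheaf cohomology groups $H^p(F; I^q)$ with $(p,q)$ constrained by the motivic bidegree; weight considerations eliminate most potential contributions for formal reasons, leaving a short list of explicit candidates. One then checks, using the Milnor primitive or motivic Steenrod-type operations acting on Witt cohomology, that any surviving class in the $(n+1,n)$- or $(n+2,n)$-column is hit by (or is the source of) a differential, so that $E_\infty = 0$ there.

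The hard part will be two pieces of geometric input: identifying the $\eta$-inverted graded pieces precisely in terms of $I^{\ast}$-cohomology, and pinning down the first differentials well enough to kill the potentially nontrivial entries at the $(n{+}1,n)$- and $(n{+}2,n)$-positions. Convergence of the spectral sequence after inverting $\eta$ also requires care over a general field of characteristic not two, and is most naturally handled through the standard connectivity estimates for the slice filtration combined with the fact that $\eta$-localization commutes with the filtered homotopy colimits at hand.
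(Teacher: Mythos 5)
Your plan gestures at the right geography---working one and two degrees above Morel's Witt-ring diagonal---but the filtration you propose is never actually constructed, and the paper deliberately does \emph{not} run a slice-type spectral sequence on $\mathbf{1}[\tfrac{1}{\eta}]$. The obstacle you raise (that $\eta$ acts trivially on $H\mathbb{Z}$-modules) is real, but your proposed remedies do not resolve it: the very effective slice filtration lives on effective spectra, while $\mathbf{1}[\tfrac{1}{\eta}]$ is maximally non-effective, so convergence of any slice-flavored tower for $\mathbf{1}[\tfrac{1}{\eta}]$ is exactly the issue that cannot be waved away by ``standard connectivity estimates.'' An ``$\eta$-Bockstein'' construction is not specified, and the claim that the graded pieces would be Eilenberg--MacLane spectra for the ideals $I^q$ is asserted rather than established. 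In short, there is no $E_2$-page to read off, no differentials to compute, and no convergence theorem to invoke; the parts you flag as ``the hard part'' are the entire content of the theorem.

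The paper's argument is of a genuinely different nature. It builds the $\mathsf{cKW}$-based Adams resolution $\mathbf{1}[\tfrac{1}{\eta}]\to\mathsf{cKW}\to\mathsf{cKW}\wedge\mathsf{cKW}$, where $\mathsf{cKW}$ is the $0$-connective cover of Witt theory, and uses Morel's theorem only to establish that the fiber $\mathsf{C}$ of the unit is $1$-connective. Two short diagram chases (Propositions~\ref{prop:coop-1} and~\ref{prop:1implies2}) then translate the desired vanishing into the $3$-connectivity of $\mathsf{cKW}\to\mathsf{cKW}\wedge\mathsf{cKW}$. That connectivity is \emph{not} proved by internal motivic considerations: after inverting $2$ and reducing via real \'etale descent to real closed fields, the paper transfers the question through real Betti realization (using Ayoub's model) to the classical splitting of $\mathsf{ko}\wedge\mathsf{ko}$ due to Mahowald, Kane, and Lellmann. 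This topological input is indispensable and entirely absent from your sketch. Finally, passing from $\mathbf{1}[\tfrac{1}{\eta},\tfrac{1}{2}]$ back to $\mathbf{1}[\tfrac{1}{\eta}]$ integrally requires a separate $2$-primary argument (Lemma~\ref{lem:unit-inv-2}), which does use a slice spectral sequence, but for $\mathbf{1}$ itself rather than for $\mathbf{1}[\tfrac{1}{\eta}]$, together with the Hu--Kriz--Ormsby convergence theorem; you do not address this $2$-adic step at all. What you have written is a research plan with the key structural, topological, and arithmetic inputs still missing, not a proof.
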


This vanishing result enters the computation of the first line
\[ \bigoplus_{n\in \mathbb{Z}} \pi_{n+1,n} \mathbf{1} \]
of motivic stable homotopy groups of spheres given in \cite{rso.sphere}.
Fabien Morel
identified the zero line of motivic stable homotopy groups of
spheres, that is, the direct sum
\[ \bigoplus_{n\in \mathbb{Z}} \pi_{n,n} \mathbf{1} \]
with the graded Milnor-Witt $K$-theory
of the base field -- see \cite[Theorem 6.2.1]{morel.pi0},
and the corresponding unstable
statement \cite[Theorem 4.9]{MorelICM2006}, documented in detail
for perfect infinite fields in \cite{morel.field}. This identification
implies that $\pi_{n,n}\mathbf{1}[\tfrac{1}{\eta}]$ is isomorphic
to the Witt ring of the base field for every integer $n$.
Impressive work of Guillou and Isaksen on the motivic Adams spectral
sequence at the prime two computes 
$\pi_{n+k,n}\mathbf{1}^\wedge_2[\tfrac{1}{\eta}]$ for all integers $n$ and
$k$ over the complex and real numbers \cite{guillou-isaksen}, 
\cite{guillou-isaksen.real}. 
It implies that $\pi_{n+3,n}\mathbf{1}^\wedge_2$ is
nontrivial, at least over the real and the complex numbers.
The next task
is to determine
$\pi_{n+3,n}\mathbf{1}[\tfrac{1}{\eta}]$ for an arbitrary field of
characteristic not two, perhaps via the 
resolution 
of $\mathbf{1}[\tfrac{1}{\eta}]$ via connective Witt theory
$\mathsf{cKW}$
employed in Section~\ref{sec:an-adams-resolution}. Only the
first two maps
\begin{equation}\label{eq:3} 
  \mathbf{1}[\tfrac{1}{\eta}] \to \mathsf{cKW} \to \mathsf{cKW}\wedge
  \mathsf{cKW} 
\end{equation}
of this resolution
enter the proof of Theorem~\ref{thm:introduction}. The zero line computation
of Morel cited above allows to deduce the vanishing 
in Theorem~\ref{thm:introduction} from the connectivity of the second map
in~(\ref{eq:3}), as Section~\ref{sec:an-adams-resolution} explains.
This connectivity is obtained in Section~\ref{sec:oper-high-witt}, at
least after inverting 2, by transferring results from topology
on connective real $K$-theory to connective Witt theory. Proving
that this transfer works proceeds by a real version of Joseph Ayoub's 
model for the Betti realization, to be explained in 
Section~\ref{sec:real-betti} and exploited in Section~\ref{sec:witt-theory}.
The argument sketched so far would provide Theorem~\ref{thm:introduction}
only after inverting 2\footnote{See
\cite[Proposition 36]{bachmann.real} for a different proof of
this vanishing statement after inverting 2.}, 
which annihilates Witt theory for fields which are
not formally real -- see Section~\ref{sec:fields-odd-char}. 
The integral computation requires knowledge
on 2-complete computations, supplied by results 
from \cite{rso.sphere}
with the help of the convergence result \cite[Theorem 1]{hu-kriz-ormsby}.
The passage from 2-inverted to integral results is explained 
in Section~\ref{sec:vanishing}. 

Theorem~\ref{thm:introduction} implies that a cell
presentation of connective Witt theory starts by attaching
4-cells via the generators 
\[ \Sigma^{3,0}\mathbf{1}[\tfrac{1}{\eta}] \to \mathbf{1}[\tfrac{1}{\eta}]\]
of $\pi_{3,0}\mathbf{1}[\tfrac{1}{\eta}]$. For the complex numbers,
where complete information is available by \cite{guillou-isaksen},
a very small and complete 
cell presentation of connective Witt theory is obtained
at the end of Section~\ref{sec:witt-theory} (see also \cite{hornbostel.nil}). 
This cell presentation
is not used in the proof of Theorem~\ref{thm:introduction}, but
included for comparison with
the impressive rational result \cite[Corollary 2.8]{alp}. Cell presentations and
connectivity are discussed in Section~\ref{sec:cell-presentations}.
The following notation is used throughout.
\vspace{0.05in}

\begin{tabular}{l|l}
$F$, $S$ & field, finite dimensional separated Noetherian base scheme \\
$\mathbf{Sm}_{S}$ & smooth schemes of finite type over $S$ \\
$\mathbf{SH}(S)$  & the motivic stable homotopy category of $S$\\ 
$\mathsf{E}$, $\mathbf{1}=S^{0,0}$ & generic motivic spectrum, the motivic sphere spectrum  \\
$S^{s,t}$, $\Sigma^{s,t}$ & motivic $(s,t)$-sphere, $(s,t)$-suspension  \\
$\pi_{s,t}\mathsf{E} = [S^{s,t},\mathsf{E}]$ & motivic stable homotopy groups of $\mathsf{E}$\\
$\underline{\pi}_{s,t}\mathsf{E}$ & sheaves of motivic stable homotopy groups of $\mathsf{E}$\\
$\mathsf{KQ}$, $\mathsf{KW}$ & hermitian $K$-theory, Witt-theory 
\end{tabular}
\vspace{0.05in}

\noindent
The suspension convention 
is such that $\mathbf{P}^1\simeq S^{2,1}$ and 
$\mathbf{A}^{1}\smallsetminus \{0\}\simeq S^{1,1}$.

\bigskip

A previous version of this work was included in a preliminary
version of \cite{rso.sphere}, but separated for the sake of
presentation. Theorem~\ref{thm:introduction} enters 
only \cite[Section 5]{rso.sphere}, and its proof requires
results from \cite[Sections 2--4]{rso.sphere} -- see Section~\ref{sec:vanishing} for details. I thank the RCN program
{\em Topology in Norway\/}, and in particular the University of Oslo 
and Paul Arne {\O}stv{\ae}r, as well as
the DFG priority program {\em Homotopy theory and algebraic geometry\/}
for support. I thank Joseph Ayoub
for his input regarding a real version of his
model of complex Betti realization. Tom Bachmann, Jeremiah Heller, and an
anonymous referee
receive my thanks for helpful comments.

\section{Cellularity and connectivity}
\label{sec:cell-presentations}

The following definitions regarding cells
are modelled on \cite{dugger-isaksen.cell}
and \cite{hu-kriz-ormsby}.
{\em Attaching a cell\/} to a motivic spectrum $\mathsf{E}$
refers to the process of forming the pushout of
\begin{equation}\label{eq:cell} 
D^{s+1,t}\mathbf{1} \hookleftarrow \Sigma^{s,t} \mathbf{1} \xrightarrow{f} \mathsf{E} 
\end{equation}
for some
map $f$ in the category of motivic spectra. The arrow pointing
to the left in~(\ref{eq:cell}) denotes the canonical
inclusion into
$D^{s+1,t}\mathbf{1}$, the simplicial mapping cylinder of
the map $\Sigma^{s,t}\mathbf{1} \to \ast$. The pushout $\mathsf{D}$ then consists of
$\mathsf{E}$, together with a cell of 
{\em dimension\/} $(s+1,t)$ and {\em weight\/} $t$. 
More generally, one may attach a collection of cells
indexed by some set $I$ by forming the pushout 
of a diagram
\[ \bigvee_{i\in I}D^{s_i,t_i} \mathbf{1}
\hookleftarrow
\bigvee_{i\in I}\Sigma^{s_i,t_i} \mathbf{1} \xrightarrow{f} \mathsf{E} \]
in the category of motivic spectra.
A {\em cell presentation\/} of a map $f\colon \mathsf{D}\to \mathsf{E}$ of motivic spectra 
consists of 
a sequence of motivic spectra
\begin{equation}\label{eq:cell-presentation} \mathsf{D}=\mathsf{D}_{-1}\xrightarrow{d_0} \mathsf{D}_0\xrightarrow{\sim} \mathsf{D}^{\prime}_0\xrightarrow{d_1} \mathsf{D}_1\xrightarrow{\sim} \mathsf{D}^{\prime}_1\to \dotsm \xrightarrow{d_{n}} \mathsf{D}_{n} 
\xrightarrow{\sim} \mathsf{D}^{\prime}_n\xrightarrow{d_{n+1}} \dotsm 
\end{equation}
with canonical map to the colimit $c\colon \mathsf{D}\to \mathsf{D}_\infty$, attaching maps
\[ \bigvee_{i\in I_n}\Sigma^{s_i,t_i} \mathbf{1} \xrightarrow{\alpha_n} 
\mathsf{D}^{\prime}_{n-1} \]
for every natural number $n$, such that
$\mathsf{D}_{n}$ is obtained by attaching cells
to $\mathsf{D}^{\prime}_{n-1}$ along $\alpha_n$, and
a weak equivalence $w\colon \mathsf{D}_\infty\xrightarrow{\sim} \mathsf{E}$ with
$w\circ c =f$. 
Furthermore, all arrows in diagram~(\ref{eq:cell-presentation})
labelled with $\sim$ denote acyclic cofibrations.

\begin{remark}\label{rem:relative-cell-pres}
  The most important instance is the 
  absolute case, that is, a cell presentation
  of a map $\ast\to \mathsf{E}$. Then one speaks of
  a cell presentation of $\mathsf{E}$. 
  The distinction between a motivic spectrum
  $\mathsf{E}$ and a cell presentation $\mathsf{D}_\infty$ of it might be 
  neglected if no confusion can arise.
\end{remark}

\begin{example}\label{ex:bounded}
  The suspension spectrum $\Sigma^\infty \mathbf{P}^\infty$
  admits a cell presentation, having precisely one cell of
  dimension $(2n,n)$ for every natural number $n$.
\end{example}

Let $\mathbf{SH}(S)^{\mathrm{cell}}$ denote the full subcategory of $\mathbf{SH}(S)$ 
of motivic spectra
admitting a cell presentation. It can be identified with
the homotopy category of {\em cellular\/} motivic spectra, that is, the
smallest full localizing subcategory of $\mathbf{SH}(S)$
containing the spheres $S^{s,t}$, as \cite[Remark 7.4]{dugger-isaksen.cell}
and the following statement
show. 

\begin{lemma}\label{lem:cellular-presentation}
  Let $\mathsf{E}$ be a motivic spectrum. If it admits a
  cell presentation, it is cellular. Conversely,
  if it is cellular, it admits a cell presentation.
\end{lemma}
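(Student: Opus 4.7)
The plan is to prove both implications of the equivalence separately.

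\emph{From cell presentation to cellular.} Given a cell presentation as in~(\ref{eq:cell-presentation}), I would proceed by induction on $n$. Each pushout defining $\mathsf{D}_n$ from $\mathsf{D}_{n-1}^{\prime}$ is taken along a cofibration with contractible target $\bigvee_i D^{s_i+1,t_i}\mathbf{1}$, so it produces a cofiber sequence
\[ \bigvee_{i\in I_n}\Sigma^{s_i,t_i}\mathbf{1} \xrightarrow{\alpha_n} \mathsf{D}_{n-1}^{\prime} \to \mathsf{D}_n. \]
Starting from $\mathsf{D}_{-1}=\ast$, closure of the localizing subcategory generated by the spheres under triangles and arbitrary coproducts inductively places every $\mathsf{D}_n$ in $\mathbf{SH}(S)^{\mathrm{cell}}$. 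The colimit $\mathsf{D}_\infty$ then fits into the Milnor cofiber sequence $\bigvee_{n} \mathsf{D}_n \xrightarrow{1-\mathrm{shift}} \bigvee_{n} \mathsf{D}_n \to \mathsf{D}_\infty$, forcing $\mathsf{E}\simeq \mathsf{D}_\infty$ to be cellular as well.

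\emph{From cellular to cell presentation.} Let $\mathscr{P}$ denote the class of motivic spectra admitting a cell presentation. The goal is to verify that $\mathscr{P}$ is a localizing subcategory containing the spheres, which then forces $\mathbf{SH}(S)^{\mathrm{cell}}\subseteq \mathscr{P}$. Each $S^{s,t}$ lies in $\mathscr{P}$ via the one-cell presentation obtained by attaching a single cell of dimension $(s,t)$ to $\ast$. Closure under arbitrary coproducts is immediate, since the wedge of cell presentations is a cell presentation with the attaching set $I_n$ replaced by the disjoint union over the wedge summands. The essential step is closure under cofibers: given a map $f\colon \mathsf{A}\to \mathsf{B}$ with $\mathsf{A},\mathsf{B}\in \mathscr{P}$, I would first replace $f$, up to weak equivalence, by a cellular map, that is, one carrying each stage $\mathsf{A}_n^{\prime}$ into some stage $\mathsf{B}_{m(n)}^{\prime}$. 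Compactness of the attaching spheres $\Sigma^{s,t}\mathbf{1}$ in $\mathbf{SH}(S)$ ensures that each attaching map of $\mathsf{A}$ factors, up to homotopy, through a finite stage of $\mathsf{B}$, so such a cellular replacement can be built inductively. The cofiber of a cellular map then inherits a canonical cell presentation whose cells are those of $\mathsf{B}$ together with the cells of $\mathsf{A}$ shifted by one in the $s$-coordinate (coming from the cone construction $\mathsf{B}\cup_\mathsf{A} C\mathsf{A}$).

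The technical heart of the argument is the cellular approximation step. A clean alternative, aligned with \cite{dugger-isaksen.cell} as already cited above, is to appeal to the underlying cofibrantly generated model structure on motivic spectra, whose generating cofibrations are precisely the sphere-disk inclusions $\Sigma^{s,t}\mathbf{1}\hookrightarrow D^{s+1,t}\mathbf{1}$ appearing in~(\ref{eq:cell}); Quillen's small object argument then supplies functorial cellular replacements, and makes all three closure properties of $\mathscr{P}$ transparent.
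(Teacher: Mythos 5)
The forward implication is fine (the paper treats it as immediate from the definitions). For the converse you take a genuinely different route from the paper, which constructs a cell presentation of a cellular $\mathsf{E}$ directly by a kill-homotopy-groups argument: begin with a wedge of spheres surjecting onto $\pi_{\ast,\ast}\mathsf{E}$, at each stage attach cells along lifts of generators of the kernel on $\pi_{\ast,\ast}$, and conclude using compactness of $\Sigma^{s,t}\mathbf{1}$ together with \cite[Cor.~7.2]{dugger-isaksen.cell}, which says a $\pi_{\ast,\ast}$-isomorphism between \emph{cellular} spectra is a weak equivalence --- this is precisely where the cellularity hypothesis on $\mathsf{E}$ enters. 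Your strategy of proving that the class $\mathscr{P}$ of spectra admitting a cell presentation is a localizing subcategory is sound in outline, but it defers rather than discharges the work: closure under cofibers amounts to a cellular approximation theorem, and you do not supply one. In this stable setting the natural proof of cellular approximation is itself a kill-homotopy-groups argument of exactly the paper's shape, so nothing is saved. There is also a small confusion in the step you do sketch: compactness gives that the composites $\Sigma^{s_i,t_i}\mathbf{1}\to \mathsf{A}\to \mathsf{B}$ factor through a finite stage of $\mathsf{B}$ up to homotopy, but the attaching maps of $\mathsf{A}$ land in stages of $\mathsf{A}$, not $\mathsf{B}$, and to produce a cellular map homotopic to $f$ you must choose these deformations coherently over the stages of $\mathsf{A}$; that coherent inductive choice is the nontrivial content.

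The proposed shortcut via Quillen's small object argument does not work as stated. The maps $\Sigma^{s,t}\mathbf{1}\hookrightarrow D^{s+1,t}\mathbf{1}$ are cofibrations, but they are not the generating cofibrations of the standard cofibrantly generated model structure on motivic (symmetric) spectra; the actual generators involve suspension spectra of arbitrary smooth $S$-schemes together with levelwise simplicial data, not just motivic spheres. Running the small object argument with those generators produces relative cell complexes whose cells are indexed by all smooth schemes, which is not a cell presentation in the paper's sense and whose cofibrant objects need not be cellular. Obtaining a model structure whose cofibrant objects are exactly the cell presentations requires a cellularization at the spheres, which is nontrivial additional machinery rather than an off-the-shelf input, and is in effect what \cite{dugger-isaksen.cell} is about.
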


\begin{proof}
  The first statement follows from the definitions.
  For the second statement, let $\mathsf{E}$ be a cellular
  motivic spectrum which is fibrant. One constructs
  inductively a suitable sequence of motivic spectra.
  Start with
  \[ \mathsf{D}_0:= \bigvee_{\alpha\in \pi_{s,t}\mathsf{E},s,t} \Sigma^{s,t} \mathbf{1} 
  \xrightarrow{\vee\alpha} \mathsf{E} \]
  and factor the canonical map $\mathsf{D}_0\to \mathsf{E}$ 
  as an acyclic cofibration $\mathsf{D}_0\xrightarrow{\sim} \mathsf{D}_0^\prime$
  followed by a fibration $\mathsf{D}_0^\prime \to \mathsf{E}$.
  The latter induces a surjection
  on $\pi_{s,t}$ for all $s,t$ by construction. For every $s,t$, choose 
  lifts of generators of the
  kernel of $\pi_{s,t}(\mathsf{D}_0^\prime\to \mathsf{E})$ and use these to attach cells
  to  $\mathsf{D}_0^\prime$, leading to a map 
  $\mathsf{D}_1\to \mathsf{E}$. As before, factor it as an acyclic cofibration,
  followed by a fibration $\mathsf{D}_1^\prime \to \mathsf{E}$.
  This fibration still
  induces a surjection on $\pi_{s,t}$ for all $s,t$. Iterating
  this procedure leads to a map
  \[ \mathsf{D}_\infty = \operatornamewithlimits{colim}_n{\mathsf{D}_n}  \to \mathsf{E} \]
  which induces a surjective and injective map on $\pi_{s,t}$
  for all $s,t$. The statement on injectivity requires that
  $\Sigma^{s,t}\mathbf{1}$ is compact, whence any element in the kernel
  lifts to a finite stage and hence is killed in the next stage.
  Since both $\mathsf{D}_\infty$ and $\mathsf{E}$ are cellular, the map
  $\mathsf{D}_\infty\to\mathsf{E}$ is even a weak 
  equivalence \cite[Cor.~7.2]{dugger-isaksen.cell}.
\end{proof}

The motivic stable homotopy category $\mathbf{SH}(S)$ is
equipped with the {\em homotopy t-structure\/} 
\[ \mathbf{SH}(S)_{\geq n} \hookrightarrow \mathbf{SH}(S) \hookleftarrow \mathbf{SH}(S)_{\leq n} \]
where $\mathbf{SH}(S)_{\geq n} = \langle \Sigma^{s,t}\Sigma^\infty X_+ \vert X\in \mathbf{Sm}_S,
s-t\geq n\rangle$ is the full subcategory 
generated under homotopy
colimits and extensions by the shifted motivic suspension spectra 
of smooth $S$-schemes of connectivity at least $n$. See 
\cite[Section 2.1]{hoyois} for details, and in particular
\cite[Theorem 2.3]{hoyois} for the identification with
Morel's original definition via Nisnevich sheaves of
motivic stable homotopy groups in case $S$ is the spectrum of a field.

\section{Real realization}\label{sec:real-betti}

Real Betti realization
will be employed in order to give a topological
interpretation of connective Witt theory over the real numbers. 
It is defined as the homotopy-colimit preserving functor
$\mathbf{SH}(\mathbb{R})\to \mathbf{SH}$ from the motivic stable homotopy category
of the real numbers to the classical motivic stable
homotopy category which is determined by sending the
suspension spectrum of a smooth variety $X$ over $\mathbb{R}$
to the suspension spectrum of the topological space of
real points $X(\mathbb{R})$, equipped with the real analytic topology.
Another viewpoint on the real Betti realization is given by
equivariant stable homotopy theory with respect to the absolute
Galois group $C_2$ of $\mathbb{R}$: 
The real Betti realization is the composition
of geometric fixed points and the complex Betti realization; 
see \cite[Section 4.4]{ho}
for details. Real Betti realization is given
already unstably on the level of presheaves on $\mathbf{Sm}_\mathbb{R}$ with
values in simplicial sets, and as such it is a simplicial functor. 
There is an additive variant
for presheaves with values in simplicial abelian groups, or, equivalently, 
complexes of abelian groups.

\begin{theorem}\label{thm:real-points}
  Real Betti realization
  $\mathbb{R}^\ast\colon \mathbf{SH}(\mathbb{R})\to \mathbf{SH}$
  is strict symmetric monoidal and
  has a right adjoint $\mathbb{R}_\ast$. Moreover, the canonical
  map 
  \begin{equation}\label{eq:real-points}
    \mathsf{D}\wedge \mathbb{R}_\ast(\mathsf{E}) \to \mathbb{R}_\ast \bigl(\mathbb{R}^\ast(\mathsf{D})\wedge \mathsf{E}\bigr)
  \end{equation}
  is an equivalence for all motivic spectra 
  $\mathsf{D}\in \mathbf{SH}(\mathbb{R})$.
\end{theorem}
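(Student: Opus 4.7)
The plan is to construct $\mathbb{R}^\ast$ as a strict symmetric monoidal left Quillen functor, obtain $\mathbb{R}_\ast$ as its right adjoint, and then verify the projection formula~(\ref{eq:real-points}) by reducing to dualizable generators. First, I would left Kan extend the functor $X\mapsto X(\mathbb{R})_+$ from $\mathbf{Sm}_\mathbb{R}$ to pointed real analytic topological spaces, working on a symmetric-spectrum model of $\mathbf{SH}(\mathbb{R})$ so that monoidality is genuinely strict via the equality $(X\times_\mathbb{R} Y)(\mathbb{R}) = X(\mathbb{R})\times Y(\mathbb{R})$. Compatibility with Nisnevich descent should follow from the behaviour of covers in the real analytic topology, $\mathbf{A}^1$-invariance from contractibility of $\mathbb{R}$, and $\mathbf{P}^1$-spectrification from $\mathbf{P}^1(\mathbb{R})\simeq S^1$. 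The right adjoint $\mathbb{R}_\ast$ is then produced as a right Quillen functor, or equivalently by the adjoint functor theorem between presentable $\infty$-categories.

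For the projection formula, my next step is to show that both sides of~(\ref{eq:real-points}) preserve arbitrary homotopy colimits in $\mathsf{D}$. The left-hand side does so because smashing with a fixed object preserves colimits. For the right-hand side it suffices that $\mathbb{R}_\ast$ preserves homotopy colimits. Here $\mathbb{R}^\ast$ sends the compact generators $\Sigma^\infty X_+$ with $X\in \mathbf{Sm}_\mathbb{R}$ to suspension spectra of the real loci $X(\mathbb{R})$, which have the homotopy type of finite CW complexes by triangulability of semi-algebraic sets; thus $\mathbb{R}^\ast$ preserves compact objects and $\mathbb{R}_\ast$ preserves filtered colimits. Being a right adjoint between stable presentable $\infty$-categories, $\mathbb{R}_\ast$ is moreover exact and preserves products, hence preserves arbitrary homotopy colimits.

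Having reduced to a generating set, I would use resolution of singularities (available since $\mathrm{char}(\mathbb{R})=0$) to see that $\mathbf{SH}(\mathbb{R})$ is generated under shifts and homotopy colimits by $\Sigma^\infty X_+$ for $X$ smooth and projective over $\mathbb{R}$. Such objects are dualizable by Spanier--Whitehead duality in $\mathbf{SH}(\mathbb{R})$, and since $\mathbb{R}^\ast$ is strict symmetric monoidal it preserves duality data: $\mathbb{R}^\ast(\mathsf{D}^\vee)\simeq(\mathbb{R}^\ast\mathsf{D})^\vee$. For dualizable $\mathsf{D}$ and any test object $T\in\mathbf{SH}(\mathbb{R})$, the adjunction $\mathbb{R}^\ast\dashv \mathbb{R}_\ast$ gives a chain of natural identifications
\[
[T,\mathsf{D}\wedge\mathbb{R}_\ast\mathsf{E}] \simeq [T\wedge\mathsf{D}^\vee,\mathbb{R}_\ast\mathsf{E}] \simeq [\mathbb{R}^\ast T\wedge\mathbb{R}^\ast\mathsf{D}^\vee,\mathsf{E}] \simeq [\mathbb{R}^\ast T,\mathbb{R}^\ast\mathsf{D}\wedge\mathsf{E}] \simeq [T,\mathbb{R}_\ast(\mathbb{R}^\ast\mathsf{D}\wedge\mathsf{E})],
\]
which Yoneda converts into the required equivalence, and unwinding identifies it with the canonical map~(\ref{eq:real-points}).

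The main obstacle I expect is constructing a genuinely strict symmetric monoidal Quillen model that descends through all three stages of localization (Nisnevich, $\mathbf{A}^1$, $\mathbf{P}^1$-spectrification) while respecting the real analytic topology; this is precisely where Ayoub's model-categorical machinery for the complex Betti realization has to be adapted to the real setting. The remaining ingredients -- semi-algebraic triangulation for compactness, generation by smooth projective schemes via resolution of singularities, and the Yoneda/dualizability calculation -- are comparatively formal once that setup is in place.
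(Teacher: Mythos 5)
Your proposal is essentially the same as the paper's argument, which is very terse: the paper cites Morel--Voevodsky (\S3.3 of \cite{mv}) and Heller--Ormsby (\cite[Prop.~4.8]{ho}) for strict monoidality and existence of the right adjoint, cites \cite[Prop.~3.2]{fhm} for the projection formula in the strongly dualizable case, and then simply observes that $\mathbf{SH}(\mathbb{R})$ is generated by strongly dualizable objects. Your proof unwinds those citations: the Yoneda/duality chain you write out is exactly the content of the Fausk--Hu--May proposition, and your resolution-of-singularities argument supplies the unstated reason why $\mathbf{SH}(\mathbb{R})$ is generated by dualizable objects in characteristic zero. The one place where you genuinely add something the paper leaves implicit is the step verifying that both sides of~(\ref{eq:real-points}) preserve homotopy colimits in $\mathsf{D}$ (so that the dualizable case extends to all of $\mathbf{SH}(\mathbb{R})$): your observation that $\mathbb{R}^\ast$ carries the compact generators $\Sigma^\infty X_+$ to finite spectra by semi-algebraic triangulation, hence that $\mathbb{R}_\ast$ preserves filtered colimits (and, being exact, all colimits), is exactly what is needed and is not spelled out in the paper. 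Two small caveats: your remark that $\mathbb{R}_\ast$ ``preserves products'' is a red herring (what you actually use is that it is exact and preserves filtered colimits), and your hand-wave about Nisnevich descent descending to the real analytic topology is the genuinely delicate point that the cited references handle carefully; but neither affects the correctness of the overall strategy.
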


\begin{proof}
  The first statement 
  follows from \cite[Section 3.3]{mv}, and also from
  \cite[Proposition 4.8]{ho}. The second statement follows
  from \cite[Proposition 3.2]{fhm} for strongly dualizable
  motivic spectra. Since $\mathbf{SH}(\mathbb{R})$ is
  generated by strongly dualizable motivic spectra,
  the result follows.
\end{proof}

Identifying the real Betti realization 
of motivic spectra which are not suspension
spectra of smooth varieties is not immediate. However, Joseph Ayoub's
beautiful model for the complex Betti realization 
given in \cite[Th\'eor\`eme 2.67]{ayoub.galois-1} 
translates to a model for the real Betti realization. In order
to define this model, let $\mathcal{I}^n$ denote the pro-real analytic
manifold obtained by open neighborhoods of the compact unit
cube $[0,1]^n\subset \mathbb{R}^n$. Letting $n$ vary defines, in the standard
way, a cocubical pro-real analytic manifold $\mathcal{I}^\bullet$ as
defined in \cite[Def.~A.1]{ayoub.galois-1}. One observes that
$\mathcal{I}^\bullet$ is in fact a pseudo-monoidal $\Sigma$-enriched
cocubical pro-real analytic manifold 
\cite[Def.~A.29]{ayoub.galois-1}. 
Let $\mathcal{R}_n$ denote the $\mathbb{R}[t_1,\dotsc,t_n]$-algebra
of real analytic functions defined on an open 
neighborhood of $[0,1]^n$. 

\begin{proposition}[Ayoub]\label{prop:ayoub-popescu}
  The $\mathbb{R}[t_1,\dotsc,t_n]$-algebra
  $\mathcal{R}_n$ is Noetherian and regular.
\end{proposition}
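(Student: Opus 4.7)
The plan is to realize $\mathcal{R}_n$ as a ring of germs of real analytic functions on the compactum $[0,1]^n$ and then to reduce both assertions to classical results on Stein compacta via complexification. Unwinding definitions, $\mathcal{R}_n = \varinjlim_U \mathcal{O}^{\omega}(U)$, where $U$ ranges over open neighborhoods of $[0,1]^n$ in $\mathbb{R}^n$. Every real analytic function extends uniquely to a holomorphic function on some complex neighborhood of its domain, so the complexification $\mathcal{R}_n \otimes_{\mathbb{R}} \mathbb{C}$ identifies with the ring of germs of holomorphic functions on $[0,1]^n$ viewed as a compact subset of $\mathbb{C}^n$.

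For Noetherianity, the key observation is that $[0,1]^n \subset \mathbb{C}^n$ is a Stein compactum, since it admits a fundamental system of Stein open neighborhoods, for instance the product neighborhoods $\prod_{i=1}^n \{z \in \mathbb{C} : \mathrm{dist}(z,[0,1]) < \varepsilon\}$, which are products of open subsets of $\mathbb{C}$ and hence Stein. Frisch's theorem on the Noetherianity of germs of holomorphic functions on a Stein compactum then gives that $\mathcal{R}_n \otimes_{\mathbb{R}} \mathbb{C}$ is Noetherian. Noetherianity descends along the faithfully flat extension $\mathcal{R}_n \to \mathcal{R}_n \otimes_{\mathbb{R}} \mathbb{C}$ by the standard argument that any strictly ascending chain of ideals in $\mathcal{R}_n$ would remain strictly ascending after base change.

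For regularity, since $\mathcal{R}_n$ is Noetherian it suffices to verify that each localization at a maximal ideal is a regular local ring. I would first classify the maximal ideals of $\mathcal{R}_n$ as those of the form $\mathfrak{m}_p = \{f \in \mathcal{R}_n : f(p) = 0\}$ for some $p \in [0,1]^n$. Granting this, the canonical map $(\mathcal{R}_n)_{\mathfrak{m}_p} \to \mathbb{R}\{t_1 - p_1, \dotsc, t_n - p_n\}$ to the ring of convergent real power series at $p$ induces an isomorphism of $\mathfrak{m}_p$-adic completions, both sides being the formal power series ring $\mathbb{R}\llbracket t_1 - p_1, \dotsc, t_n - p_n \rrbracket$, which is regular of Krull dimension $n$. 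Since regularity of a Noetherian local ring is detected on completions, $(\mathcal{R}_n)_{\mathfrak{m}_p}$ is regular.

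The main obstacle is the classification of maximal ideals: one must rule out the existence of maximal ideals other than the evaluation ideals $\mathfrak{m}_p$. This is a Nullstellensatz-type statement for real analytic germs on the compactum $[0,1]^n$, most conveniently deduced from the analogous statement for holomorphic germs on the Stein compactum $[0,1]^n \subset \mathbb{C}^n$ (a consequence of Cartan's Theorem B combined with the compactness of $[0,1]^n$, which ensures that finitely many functions outside a proposed non-evaluation maximal ideal cover $[0,1]^n$ by their nonvanishing loci), and then descended to $\mathcal{R}_n$ via the faithfully flat complexification used in the Noetherianity step.
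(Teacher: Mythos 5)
Your overall strategy matches the paper's: complexify, invoke the known Noetherianity and regularity of the ring of germs of holomorphic functions on the Stein compactum $[0,1]^n\subset\mathbb{C}^n$, and descend along $\mathbb{R}\hookrightarrow\mathbb{C}$. For Noetherianity your faithfully-flat-descent argument is exactly what ``Galois descent'' amounts to in the paper's one-line proof, so there you and the paper coincide.

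Where you diverge is regularity. The paper again just says ``Galois descent,'' relying on the fact that regularity descends along faithfully flat maps of Noetherian rings (if $A\to B$ is faithfully flat and $B$ is regular then $A$ is regular, e.g.\ via finiteness of Tor-dimension or the local criterion with Serre's theorem). You instead give a direct argument on the real side: classify the maximal ideals of $\mathcal{R}_n$ as evaluation ideals $\mathfrak{m}_p$ via a Nullstellensatz for germs on the Stein compactum, then show the $\mathfrak{m}_p$-adic completion of the localization is a formal power series ring. This is correct but considerably longer; it needs a Hadamard-type lemma to see that $\mathfrak{m}_p$ is generated by $t_1-p_1,\dotsc,t_n-p_n$ in $\mathcal{R}_n$ (so that the map to convergent power series really does induce an isomorphism on all quotients by powers of the maximal ideal), and it needs the Nullstellensatz statement you flag as ``the main obstacle.'' All of these are available, and your sketch of the Nullstellensatz via compactness plus Cartan's Theorem B is the standard one, so there is no gap --- but you are re-deriving on the real side something that descent gives for free. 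The payoff of your route is that it makes the geometric content visible (completions at real points are power series rings); the cost is that you must establish the maximal ideal classification, which the paper's descent argument sidesteps entirely.
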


\begin{proof}
  The $\mathbb{C}[t_1,\dotsc,t_n]$-algebra $\mathcal{R}_n\otimes_\mathbb{R} \mathbb{C}$
  is the algebra of complex analytic functions defined on an open
  neighborhood of $[0,1]^n$, which is Noetherian and regular.
  The required statement follows by Galois descent.
\end{proof}

Popescu's theorem \cite[Theorem 10.1]{spivakovsky.smoothing}
then implies that 
$\mathcal{R}_n$ is a filtered colimit of smooth 
$\mathbb{R}[t_1,\dotsc,t_n]$-algebras, and thus can be considered
as an affine pro-smooth $\mathbb{R}$-variety $\operatorname{Spec}(\mathcal{R}_n)$.
Hence any presheaf $K$ on $\mathbf{Sm}_\mathbb{R}$ admits a value
$K(\operatorname{Spec}(\mathcal{R}_n))$. Letting $n$ vary defines
a cubical object $K(\operatorname{Spec}(\mathcal{R}_\bullet))$.

\begin{theorem}[Ayoub]\label{thm:ayoub-real-betti}
  Let $C\colon \mathbf{Sm}_\mathbb{R}\to \mathbf{Cx}$ 
  be a presheaf of complexes of abelian groups.
  The real Betti realization of $C$ is quasi-isomorphic to
  the total complex $C(\operatorname{Spec}(\mathcal{R}_\bullet))$.
\end{theorem}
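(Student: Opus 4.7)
The plan is to follow Joseph Ayoub's argument for the complex Betti realization in \cite[Th\'eor\`eme~2.67]{ayoub.galois-1} and adapt each step to the real analytic setting. Both the real Betti realization $\mathbb{R}^\ast$ and the functor $T(C):=\operatorname{Tot}C(\operatorname{Spec}(\mathcal{R}_\bullet))$ extend to derived functors from presheaves of complexes of abelian groups to complexes of abelian groups that preserve homotopy colimits: the former by its construction as a left adjoint on the level of presheaves of simplicial sets, and the latter since $\operatorname{Spec}(\mathcal{R}_n)$ is a fixed pro-scheme and the total complex preserves termwise quasi-isomorphisms of cubical objects. Hence it suffices to produce a natural quasi-isomorphism $\mathbb{R}^\ast\mathbb{Z}[X]\simeq T(\mathbb{Z}[X])$ for every smooth $\mathbb{R}$-variety $X$.

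For such an $X$, Proposition~\ref{prop:ayoub-popescu} combined with Popescu's theorem writes $\mathcal{R}_n$ as a filtered colimit of smooth $\mathbb{R}[t_1,\dotsc,t_n]$-algebras, and so identifies $X(\operatorname{Spec}(\mathcal{R}_n))$ with the set of real analytic maps from some open neighborhood of the compact cube $[0,1]^n\subset \mathbb{R}^n$ into the real analytic manifold $X(\mathbb{R})$. Letting $n$ vary, the pseudo-monoidal cocubical structure on $\mathcal{I}^\bullet$ assembles these sets into a cubical subset $\operatorname{Sing}^\omega_\bullet X(\mathbb{R})\subset \operatorname{Sing}_\bullet X(\mathbb{R})$ of real analytic cubes inside the ordinary singular cubical set of $X(\mathbb{R})$; the associated normalized chain complex is $T(\mathbb{Z}[X])$ by inspection.

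On the other hand, $\mathbb{R}^\ast \mathbb{Z}[X]$ is quasi-isomorphic to the topological singular chain complex of $X(\mathbb{R})$, which is also computed by the normalized complex of $\operatorname{Sing}_\bullet X(\mathbb{R})$. The theorem therefore reduces to showing that the inclusion $\operatorname{Sing}^\omega_\bullet X(\mathbb{R})\hookrightarrow \operatorname{Sing}_\bullet X(\mathbb{R})$ is a quasi-isomorphism after normalization. This will follow from Whitney's real analytic approximation theorem, applied both to continuous cubes $[0,1]^n\to X(\mathbb{R})$ and to homotopies between two such approximations, combined with a standard acyclic models argument.

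The main obstacle is this last comparison. The analogous statement for smooth singular chains is classical, but the real analytic version must be arranged so that the chosen approximations respect the cubical face and degeneracy maps, which amounts to performing Whitney approximation relative to the boundary of the cube and, simultaneously, for all faces compatibly. Once this technical issue is addressed, the proof runs in direct parallel to Ayoub's complex argument, with real analytic approximation replacing the holomorphic tools used in \cite{ayoub.galois-1}, and Galois descent from $\mathcal{R}_n\otimes_\mathbb{R}\mathbb{C}$ provides a sanity check against the complex case already settled there.
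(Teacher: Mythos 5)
The paper provides no proof of this theorem. It is recorded as due to Ayoub, with the only justification being the sentence preceding the statement, which asserts that Ayoub's complex model (\cite[Th\'eor\`eme 2.67]{ayoub.galois-1}) ``translates to a model for the real Betti realization,'' together with the acknowledgement of Ayoub's input. So there is no paper proof to compare against directly; what I can do is assess your sketch on its own terms.

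Your outline has the right general shape (reduce to representables, identify the cubical evaluation with real analytic cubes, compare to singular chains), but two points deserve scrutiny beyond the obstacle you flag. First, the reduction to representables rests on the claim that $T(C)=\operatorname{Tot}C(\operatorname{Spec}(\mathcal{R}_\bullet))$ ``preserves homotopy colimits'' because ``the total complex preserves termwise quasi-isomorphisms of cubical objects.'' That justifies invariance under objectwise quasi-isomorphism, but the theorem is about the \emph{derived} real Betti realization, the left derived functor with respect to the motivic ($\mathbf{A}^1$-Nisnevich-local) model structure. The actual content of Ayoub's theorem is that the na\"ive, underived evaluation $T$ already computes this derived functor: one must show $T$ sends Nisnevich-local equivalences and $\mathbf{A}^1$-homotopy equivalences to quasi-isomorphisms, which hinges on the specific algebraic properties of $\mathcal{R}_n$ (filtered colimit of smooth $\mathbb{R}[t_1,\dotsc,t_n]$-algebras via Popescu, the cocubical structure of $\mathcal{I}^\bullet$ giving contracting homotopies, and so on). Your proposal does not engage with that descent/invariance step, and it is the heart of the matter.

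Second, the gap you do flag --- making Whitney's real analytic approximation compatible with the cubical face and degeneracy maps, relative to the boundary --- is genuine and not a routine application of Whitney's theorem; the analogous smooth-vs-continuous singular comparison is classical, but the real analytic version relative to the boundary of the cube needs a careful argument. It is also worth noting that Ayoub's proof of the complex statement does not proceed by holomorphic approximation of continuous cubes; it goes through a chain of reductions exploiting properties of $\operatorname{Spec}(\mathcal{R}_n)$ and its \'etale sub-pro-scheme $\operatorname{Spec}(\mathcal{R}_n^{et})$, so what the paper calls ``translating'' Ayoub's argument is closer to re-running that machinery over $\mathbb{R}$ than to the approximation-theoretic route you sketch. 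Your alternative route might work, but it is not a verified translation, and the two unresolved points above mean the proposal does not yet constitute a proof.
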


In fact, as in the complex case given in 
\cite[Th\'eor\`eme 2.61]{ayoub.galois-1}
it is possible to replace $\operatorname{Spec}(\mathcal{R}_n)$ in
Theorem~\ref{thm:ayoub-real-betti} by 
$\operatorname{Spec}(\mathcal{R}^{et}_n)$ where $\mathcal{R}^{et}_n$
is, roughly speaking, the largest sub-algebra of $\mathcal{R}_n$
which is pro-\'etale over $\mathbb{R}[t_1,\dots, t_n]$.

\section{Witt theory}
\label{sec:witt-theory}

Let $\mathsf{KQ}$ denote the motivic spectrum
for hermitian $K$-theory of quadratic forms \cite{hornbostel.hermitian}.
It is defined over $\operatorname{Spec}(\mathbb{Z}[\tfrac{1}{2}])$, and
hence over any scheme in which $2$ is invertible.
Note the identification
\[  \mathsf{KQ}_{s,t} = \pi_{s,t} \mathsf{KQ} =  \mathbb{G}W_{s-2t}^{[-t]}(F)\]
with Schlichting's higher Grothendieck-Witt groups \cite[Definition 9.1]{schlichting} for any field of characteristic different from two. 
The periodicity element is denoted
$\alpha\colon S^{8,4}\to \mathsf{KQ}$.
It is denoted $\beta$ in \cite{ananyevskiy.coop}.

\begin{theorem}
\label{theorem:KQcellular}
Suppose $S$ is a scheme over $\operatorname{Spec}(\mathbb{Z}[\tfrac{1}{2}])$. 
Then $\mathsf{KQ}$ is a cellular commutative motivic ring 
spectrum in $\mathbf{SH}(S)$ which is
preserved under base change.
\end{theorem}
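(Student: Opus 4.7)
The plan is to derive all three assertions from a geometric model for $\mathsf{KQ}$. Following Panin and Walter, $\mathsf{KQ}$ admits a representation in $\mathbf{SH}(\operatorname{Spec}(\mathbb{Z}[\tfrac{1}{2}]))$ by a motivic $(8,4)$-periodic $\Omega$-spectrum whose underlying simplicial presheaf is assembled from the quaternionic (or symplectic) Grassmannians $\operatorname{HGr}_{r,n}$, together with the Bott element $\alpha\colon S^{8,4}\to \mathsf{KQ}$. Orthogonal sum and tensor product of symplectic vector bundles define a commutative monoid structure already at the presheaf level. Pulling this model back along $S\to \operatorname{Spec}(\mathbb{Z}[\tfrac{1}{2}])$ yields the commutative motivic ring spectrum $\mathsf{KQ}$ in $\mathbf{SH}(S)$, taking care of the first assertion.

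Base change compatibility is then essentially tautological: the schemes $\operatorname{HGr}_{r,n}$, their stabilization maps, and the periodicity element $\alpha$ are all defined over $\operatorname{Spec}(\mathbb{Z}[\tfrac{1}{2}])$ and pulled back from there. Hence for any morphism $f\colon S'\to S$ of schemes over $\mathbb{Z}[\tfrac{1}{2}]$ the canonical comparison $Lf^\ast \mathsf{KQ}_S\to \mathsf{KQ}_{S'}$ is induced by the identity on each building block, and is therefore an equivalence.

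For cellularity, I would argue that each $\operatorname{HGr}_{r,n}$ admits a symplectic Schubert-type decomposition: filtering by incidence with a chosen maximal isotropic flag produces a filtration whose successive cofibers are Thom spaces of trivial vector bundles over affine-bundle strata, hence motivic spheres. This makes every $\operatorname{HGr}_{r,n}$ cellular in $\mathbf{SH}(S)$. Since cellular motivic spectra form a localizing subcategory, the colimit $\operatorname{colim}_{r,n}\operatorname{HGr}_{r,n}$ and the Bott telescope defining $\mathsf{KQ}$ are cellular, and Lemma~\ref{lem:cellular-presentation} supplies a cell presentation. The hard part is to carry out the symplectic Bruhat decomposition in a motivically robust way, i.e.\ to certify that each stratum really contributes exactly one motivic cell of predictable bidegree; this is the content of results of Panin--Walter and Ananyevskiy on the structure of $\operatorname{HGr}_{r,n}$, and once it is available the remaining assembly into a cell presentation of $\mathsf{KQ}$ is formal.
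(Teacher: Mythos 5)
Your proposal takes essentially the same route as the paper, which treats this theorem as a consequence of the Panin--Walter geometric model for $\mathsf{KQ}$: the ring structure and base-change compatibility are read off directly from that model, and cellularity is deduced from the structure of the quaternionic Grassmannians, with the details deferred to the cited references. One caveat on your cellularity sketch: the quaternionic Grassmannians do not in general admit a Bruhat-type stratification into affine cells with trivial normal bundles whose cofibers are motivic spheres; rather, the Panin--Walter decomposition is inductive, realizing the cofiber of $\operatorname{HGr}(r,n)\to \operatorname{HGr}(r,n+1)$ as a Thom space of a bundle over a smaller quaternionic Grassmannian, so cellularity of each cofiber is established by induction rather than directly. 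That said, the overall strategy -- cellularity of each building block, closure of cellular spectra under colimits and the Bott telescope, and deferring the geometric input to Panin--Walter and \cite{rso.kqcell} -- matches the paper's proof, which is itself just a pointer to those references.
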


\begin{proof}
  A ring structure is provided in~\cite{paninwalterBO}.
  Cellularity follows basically from the model given 
  in~\cite{paninwalterBO}; details may be found in \cite{rso.kqcell}.
  The statement regarding base change refers to the fact that, given
  a morphism $f\colon S\rightarrow S^\prime$ of schemes over 
  $\operatorname{Spec}(\mathbb{Z}[\tfrac{1}{2}])$, there is a canonical
  identification $f^\ast(\mathsf{KQ})\rightarrow
  \mathsf{KQ}$ of motivic spectra over $S$. It is induced by the 
  corresponding canonical identification of Grassmannians which serve
  to model $\mathsf{KQ}$ by~\cite{paninwalterBO}.
\end{proof}

Let $\mathsf{KW}$ be the motivic ring spectrum representing higher Balmer-Witt 
groups in the motivic stable homotopy category. It can be
described as $\mathsf{KW} = \mathsf{KQ}[\tfrac{1}{\eta}]$, where
$\eta\colon S^{1,1}\to
S^{0,0}$ is the first algebraic Hopf map.

\begin{corollary}\label{cor:kt-cellular}
Suppose $S$ is a scheme over $\operatorname{Spec}(\mathbb{Z}[\tfrac{1}{2}])$. 
Then $\mathsf{KW}$ is a cellular motivic ring 
spectrum in $\mathbf{SH}(S)$ which is
preserved under base change.
It is a commutative $\mathbf{1}[\tfrac{1}{\eta}]$-algebra.
\end{corollary}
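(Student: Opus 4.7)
The plan is to realize $\mathsf{KW}$ as a homotopy colimit built from $\mathsf{KQ}$ and then transport the three required properties (cellularity, base change invariance, and the algebra structure) along that colimit construction, using Theorem~\ref{theorem:KQcellular} as a black box.

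First, I would identify $\mathsf{KW} = \mathsf{KQ}[\tfrac{1}{\eta}]$ with the telescope
\[ \mathsf{KW} \simeq \operatornamewithlimits{colim}\bigl( \mathsf{KQ} \xrightarrow{\eta} \Sigma^{-1,-1}\mathsf{KQ}\xrightarrow{\eta}\Sigma^{-2,-2}\mathsf{KQ}\to\dotsm\bigr), \]
in $\mathbf{SH}(S)$. Because each term is a suspension of $\mathsf{KQ}$, each term is cellular by Theorem~\ref{theorem:KQcellular}; cellular motivic spectra are closed under arbitrary homotopy colimits (they form a localizing subcategory), so $\mathsf{KW}$ is cellular. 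The same telescope description combined with $\eta$ being central in $\pi_{\ast,\ast}\mathsf{KQ}$ (since $\mathsf{KQ}$ is commutative) endows $\mathsf{KW}$ with a commutative ring structure in $\mathbf{SH}(S)$: this is the standard fact that inverting a central element of the homotopy ring of a commutative ring spectrum yields a commutative ring spectrum, and the unit $\mathbf{1}\to\mathsf{KQ}\to\mathsf{KW}$ is a ring map.

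For base change, let $f\colon S\to S^\prime$ be a morphism of $\mathbb{Z}[\tfrac{1}{2}]$-schemes. The functor $f^\ast$ is symmetric monoidal and preserves homotopy colimits, and $f^\ast(\eta)=\eta$ by naturality of the Hopf map. Combined with the canonical identification $f^\ast\mathsf{KQ}\xrightarrow{\sim}\mathsf{KQ}$ provided by Theorem~\ref{theorem:KQcellular}, applying $f^\ast$ termwise to the telescope gives $f^\ast\mathsf{KW}\xrightarrow{\sim}\mathsf{KW}$ as ring spectra.

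Finally, for the $\mathbf{1}[\tfrac{1}{\eta}]$-algebra structure, note that the unit $\mathbf{1}\to\mathsf{KW}$ sends $\eta$ to a unit in $\pi_{\ast,\ast}\mathsf{KW}$ by construction of the telescope. By the universal property of localization at the multiplicative system generated by $\eta$, this unit map factors uniquely through a map of commutative ring spectra $\mathbf{1}[\tfrac{1}{\eta}]\to\mathsf{KW}$, giving the required algebra structure. None of the steps is genuinely difficult; the only point requiring a brief justification is that the telescope model of $\mathsf{KQ}[\tfrac{1}{\eta}]$ really carries a commutative ring structure refining the one on $\mathsf{KQ}$, and this is standard once centrality of $\eta$ is observed.
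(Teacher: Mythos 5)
Your proposal is correct and follows the route the paper intends: cellularity of $\mathsf{KQ}$ from Theorem~\ref{theorem:KQcellular}, the telescope description of $\mathsf{KQ}[\tfrac{1}{\eta}]$ together with closure of cellular objects under homotopy colimits, base change via $f^\ast$ being symmetric monoidal and colimit-preserving, and the universal property of localization at $\eta$ for the $\mathbf{1}[\tfrac{1}{\eta}]$-algebra structure. The paper leaves the corollary without an explicit proof, and the argument it relies on is the one you gave.

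One remark worth keeping in mind: centrality of $\eta$ in $\pi_{\ast,\ast}\mathsf{KQ}$ is not automatic from ordinary graded-commutativity, because in the motivic setting the Koszul sign rule involves $\langle -1\rangle$ rather than just $(-1)$; but Morel's theorem on $\pi_{\ast,\ast}\mathbf{1}$ guarantees $\eta$ is central in the homotopy of any motivic $E_\infty$-ring, so your argument goes through. Also, if one wants $\mathsf{KW}$ to be a commutative $\mathbf{1}[\tfrac{1}{\eta}]$-algebra in a strict (model-categorical or $\infty$-categorical) sense rather than merely up to homotopy, the telescope construction alone is not quite enough; the paper points to the machinery of \cite{grso} for this, and that reference is the appropriate substitute for your phrase ``this is standard.''
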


Over fields, one may describe the coefficients of $\mathsf{KW}$ as follows:
\begin{equation}\label{eq:kt-coeff}
  \mathsf{KW}_{\ast,\ast}\cong \mathsf{KW}_{0,0}[\eta,\eta^{-1},\alpha,\alpha^{-1}]
  \cong W[\eta,\eta^{-1},\alpha,\alpha^{-1}]
\end{equation}
Here $W$ is the Witt ring of the base field
and $\alpha$ is the following composition:
\[ S^{8,4}\xrightarrow{\alpha}\mathsf{KQ}\xrightarrow{\mathrm{can.}}\mathsf{KW} \]
The remainder of the section will be devoted to specific
descriptions of the motivic spectrum $\mathsf{KW}$, after suitable
modifications, or over specific fields.
See \cite[Theorem 2.6.4]{scharlau} for the following
statement.

\begin{theorem}[Pfister]\label{thm:witt-2-torsion}
  Let $S$ be the spectrum of a field.
  The additive group underlying the graded ring
  $\mathsf{KW}_{\ast,\ast}[\tfrac{1}{2}]$ is torsion-free. 
\end{theorem}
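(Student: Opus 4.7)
The plan is to reduce the statement via the coefficient computation in equation~(\ref{eq:kt-coeff}) and then invoke Pfister's classical result on the torsion in the Witt ring.

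First, using the isomorphism $\mathsf{KW}_{\ast,\ast}\cong W[\eta,\eta^{-1},\alpha,\alpha^{-1}]$ valid over a field, the underlying graded abelian group is a direct sum, indexed by bidegrees $(s,t)\in\mathbb{Z}^2$, of copies of the Witt ring $W$ of $F$. Inverting $2$ commutes with this direct sum decomposition, so the question reduces to showing that the abelian group $W[\tfrac{1}{2}]$ is torsion-free.

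Next, I would invoke Pfister's local-global theorem in the form recorded as \cite[Theorem 2.6.4]{scharlau}: the torsion subgroup of $W(F)$ consists entirely of elements of $2$-power order, i.e.\ $W(F)_{\mathrm{tors}}$ is $2$-primary. (Concretely, any torsion class is represented by a sum of Pfister forms annihilated by a power of $2$.) Once this is granted, inverting $2$ kills every torsion element of $W(F)$, hence $W(F)[\tfrac{1}{2}]$ is torsion-free.

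There is not really a hard step here beyond citing Pfister: the only thing to be careful about is the bookkeeping that $\mathsf{KW}_{\ast,\ast}[\tfrac{1}{2}]$ is, as a graded abelian group, a free $W[\tfrac{1}{2}]$-module on the Laurent monomials in $\eta$ and $\alpha$, so that torsion-freeness of $W[\tfrac{1}{2}]$ propagates to each bidegree. The main conceptual input — that $W(F)$ has only $2$-primary torsion — is exactly Pfister's theorem, and so the argument is effectively a reduction followed by a citation.
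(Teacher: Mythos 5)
Your argument is correct and matches what the paper intends: the paper gives no explicit proof, only the remark ``See \cite[Theorem 2.6.4]{scharlau} for the following statement,'' and you have correctly unpacked this citation together with the reduction via~(\ref{eq:kt-coeff}) to $W[\tfrac{1}{2}]$. The bookkeeping — that $\mathsf{KW}_{\ast,\ast}$ is a free $W$-module on Laurent monomials in $\eta$ and $\alpha$, so that torsion-freeness of $W[\tfrac{1}{2}]$ propagates bidegree-wise — is exactly the right observation and is all that is needed beyond Pfister's theorem that $W(F)_{\mathrm{tors}}$ is $2$-primary.
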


In order to state the next theorem, let $\mathsf{KO}$ denote the topological
spectrum representing real topological $K$-theory. Its 0-connective
cover is denoted $\mathsf{ko}$. The coefficients of
the topological spectrum $\mathsf{KO}[\tfrac{1}{2}]$ are particularly simple:
A Laurent polynomial ring on a single generator $\alpha_\mathsf{top}$ of
degree 4.

\begin{theorem}[Brumfiel]\label{thm:real-points-kt}
  There is an equivalence $\mathbb{R}^\ast \mathsf{KW}[\tfrac{1}{2}] \simeq \mathsf{KO}[\tfrac{1}{2}]$
  of topological spectra. The
  unit map $\mathsf{KW}[\tfrac{1}{2}]\to \mathbb{R}_\ast\mathbb{R}^\ast \mathsf{KW}[\tfrac{1}{2}]$ is an equivalence
  in $\mathbf{SH}(\mathbb{R})$.
\end{theorem}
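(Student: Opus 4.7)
The theorem asserts two things: the equivalence $\mathbb{R}^\ast\mathsf{KW}[\tfrac{1}{2}]\simeq\mathsf{KO}[\tfrac{1}{2}]$, and the equivalence of the unit map $\mathsf{KW}[\tfrac{1}{2}]\to\mathbb{R}_\ast\mathbb{R}^\ast\mathsf{KW}[\tfrac{1}{2}]$. The plan is to establish the first by combining Brumfiel's classical comparison with the Panin--Walter geometric model of $\mathsf{KQ}$, plus the observation that inverting $\eta$ is vacuous after real realisation; then to deduce the second from a bigraded coefficient comparison together with the projection formula~(\ref{eq:real-points}) and cellularity.

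\emph{For the first assertion}, the Panin--Walter model represents $\mathsf{KQ}$ by an ind-Grassmannian of non-degenerate symmetric forms, whose real points form the classifying ind-space for virtual real vector bundles. This supplies a natural ring map $\kappa\colon\mathbb{R}^\ast\mathsf{KQ}\to\mathsf{KO}$ sending the periodicity class $\alpha$ to $\alpha_{\mathrm{top}}$. Brumfiel's classical comparison, together with Pfister's torsion-freeness (Theorem~\ref{thm:witt-2-torsion}), identifies the bigraded coefficients of $\mathbb{R}^\ast\mathsf{KQ}[\tfrac{1}{2}]$ and $\mathsf{KO}[\tfrac{1}{2}]$ as matching Laurent polynomial rings over $\mathbb{Z}[\tfrac{1}{2}]$; cellularity of $\mathsf{KQ}[\tfrac{1}{2}]$ (Theorem~\ref{theorem:KQcellular}) then upgrades this coefficient-level isomorphism to a spectrum-level equivalence. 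To pass from $\mathsf{KQ}$ to $\mathsf{KW}$, note that the real realisation of the unstable Hopf map $\mathbf{A}^2\smallsetminus\{0\}\to\mathbf{P}^1$ is the antipodal double cover $S^1\to\mathbb{R}\mathbf{P}^1$, which stably represents $2\in\pi_0^{\mathrm{top}}\mathbf{1}$. Since $2$ is already a unit in $\mathsf{KO}[\tfrac{1}{2}]$, inverting $\eta$ has no effect after real realisation, yielding $\mathbb{R}^\ast\mathsf{KW}[\tfrac{1}{2}]\simeq\mathsf{KO}[\tfrac{1}{2}]$.

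\emph{For the second assertion}, (\ref{eq:kt-coeff}) together with $W(\mathbb{R})=\mathbb{Z}$ gives
\[
\pi_{s,t}\mathsf{KW}[\tfrac{1}{2}] \;=\; \mathbb{Z}[\tfrac{1}{2}]\cdot\eta^{2t-s}\alpha^{(s-t)/4}\quad\text{for}\ s\equiv t\pmod{4},
\]
and zero otherwise. By adjunction and $\mathbb{R}^\ast S^{s,t}\simeq S^{s-t}$, the target satisfies $\pi_{s,t}(\mathbb{R}_\ast\mathsf{KO}[\tfrac{1}{2}])\cong\pi^{\mathrm{top}}_{s-t}\mathsf{KO}[\tfrac{1}{2}]=\mathbb{Z}[\tfrac{1}{2}]\cdot\alpha_{\mathrm{top}}^{(s-t)/4}$ in matching bidegrees and zero elsewhere. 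The unit map, being a ring map, sends $\eta\mapsto 2$ and $\alpha\mapsto\alpha_{\mathrm{top}}$---both units after $[\tfrac{1}{2}]$---so it induces an isomorphism on bigraded homotopy. The projection formula~(\ref{eq:real-points}), applied with $\mathsf{D}=\mathsf{KW}[\tfrac{1}{2}]$ and $\mathsf{E}$ the topological sphere spectrum, identifies $\mathbb{R}_\ast\mathsf{KO}[\tfrac{1}{2}]$ as a $\mathsf{KW}[\tfrac{1}{2}]$-module built from cells $\Sigma^{s,t}\mathsf{KW}[\tfrac{1}{2}]$; combined with cellularity of the source (Corollary~\ref{cor:kt-cellular}), the coefficient isomorphism then promotes to an equivalence in $\mathbf{SH}(\mathbb{R})$.

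\emph{Main obstacle.} The hardest step will be the spectrum-level Brumfiel equivalence $\mathbb{R}^\ast\mathsf{KQ}[\tfrac{1}{2}]\simeq\mathsf{KO}[\tfrac{1}{2}]$: promoting Brumfiel's classical Witt-group statement to all of hermitian $K$-theory requires a careful identification of the real-analytic topology on the Panin--Walter representing spaces. Ayoub's real Betti model (Theorem~\ref{thm:ayoub-real-betti}) should provide the bridge between the algebraic representing spaces and their analytic counterparts, making the requisite coefficient calculation accessible.
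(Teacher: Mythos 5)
The paper's proof is much shorter and genuinely different: it invokes the Karoubi--Schlichting--Weibel theorem~\cite[Theorem 6.2]{ksw}, which gives a \emph{natural} equivalence of classical spectra $\mathsf{KW}_0[\tfrac{1}{2}](X)\to\mathsf{KO}[\tfrac{1}{2}](\mathbb{R}^\ast X)$ for \emph{every} finite type $\mathbb{R}$-scheme $X$, and then concludes both assertions by Yoneda and adjunction, since the objects $\Sigma^{s,t}\Sigma^\infty X_+$ generate $\mathbf{SH}(\mathbb{R})$. Your route is quite different, and unfortunately it has a genuine gap in the second step that the paper's naturality argument is specifically designed to avoid.

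The critical problem is the promotion of the coefficient isomorphism to an equivalence for the unit $\mathsf{KW}[\tfrac{1}{2}]\to\mathbb{R}_\ast\mathbb{R}^\ast\mathsf{KW}[\tfrac{1}{2}]$. A map out of a cellular spectrum inducing an isomorphism on all bigraded homotopy groups $\pi_{s,t}$ is an equivalence only if the \emph{target} is also cellular (Dugger--Isaksen); otherwise the map merely identifies the source with the cellularization of the target. The target here is $\mathbb{R}_\ast\mathsf{KO}[\tfrac{1}{2}]\simeq\mathsf{KW}[\tfrac{1}{2}]\wedge\mathbb{R}_\ast\mathbb{S}$, and $\mathbb{R}_\ast$ is a right adjoint: it has no reason to preserve cellularity, and $\mathbb{R}_\ast\mathbb{S}$ is a very large spectrum (it records derived real-analytic sections). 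The projection formula tells you $\mathbb{R}_\ast\mathsf{KO}[\tfrac{1}{2}]$ is a $\mathsf{KW}[\tfrac{1}{2}]$-module, but being a module is far weaker than being built from free cells; that step does not go through. The entire difficulty of the second assertion lives precisely in this non-cellular slack, and checking on $\pi_{s,t}$ alone cannot see it. This is why the paper's proof instead checks the map on all test objects $\Sigma^{s,t}\Sigma^\infty X_+$ simultaneously, which is exactly the content supplied by the naturality in $X$ of the KSW comparison.

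There is a secondary concern with the first step. The graded homotopy of $\mathbb{R}^\ast\mathsf{KQ}[\tfrac{1}{2}]$ is not obtained by collapsing the bigraded homotopy of $\mathsf{KQ}[\tfrac{1}{2}]$; real Betti realization is a colimit of the cell structure, not a degree-by-degree operation, and the Grothendieck--Witt groups of $\mathbb{R}$ are not a Laurent polynomial ring. Even granting your observation that $\mathbb{R}^\ast\eta=2$ (so that the fiber of $\mathsf{KQ}\to\mathsf{KW}$ realizes to something $2$-power-torsion, hence dies after inverting $2$), you still need to compute the realization, which in effect requires the KSW/Brumfiel input rather than providing an independent route around it. The framework you propose (Ayoub's model bridging algebraic and analytic representing spaces) is a reasonable direction, but the paper uses Ayoub's model only later, for Lemma~\ref{lem:real-points-hw}, and relies on KSW as a black box for this theorem.
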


\begin{proof}
  This follows from \cite[Theorem 6.2]{ksw} 
  which provides a natural 
  equivalence of classical spectra
  \[ 
  \mathsf{KW}_0[\tfrac{1}{2}](X) 
  \to \mathsf{KO}[\tfrac{1}{2}](\mathbb{R}^\ast X)  \]
  for every  
  finite type $\mathbb{R}$-scheme $X$.
  The Yoneda lemma then implies that the map
  induced by the functor $\mathbb{R}^\ast$ on internal
  simplicial sets of morphisms is an equivalence
  for all smooth $\mathbb{R}$-schemes, whose
  $\Sigma^{s,t}$-suspensions generate 
  $\mathbf{SH}(\mathbb{R})$. Adjointness
  then implies that the unit map
  $\mathsf{KW}[\tfrac{1}{2}]\to \mathbb{R}_\ast\mathbb{R}^\ast \mathsf{KW}[\tfrac{1}{2}]$ is an equivalence
  in $\mathbf{SH}(\mathbb{R})$.
\end{proof}

Due to the $\alpha$-periodicity of its target, the unit $\mathbf{1} \to \mathsf{KW}$
is not $k$-connective for any $k\in \mathbb{Z}$. Let
$\psi\colon \mathsf{cKW}:= \mathsf{KW}_{\geq 0}\to \mathsf{KW}$ denote the 
$0$-connective cover with respect to the homotopy $t$-structure.
Since $\mathbf{1}$ is $0$-connective,
the $\mathbf{1}[\tfrac{1}{\eta}]$-algebra homomorphism
$u\colon \mathbf{1}[\tfrac{1}{\eta}]\to \mathsf{KW}$ 
factors uniquely as a $\mathbf{1}[\tfrac{1}{\eta}]$-algebra homomorphism
$cu \colon \mathbf{1}[\tfrac{1}{\eta}] \to \mathsf{cKW}$, even in a strict
sense if one uses \cite{grso} whose general setup applies here.
The next aim is to provide an analog of Theorem~\ref{thm:real-points-kt}
for $\mathsf{cKW}$ instead of $\mathsf{KW}$.

\begin{theorem}\label{thm:ckt-base}
  Suppose $S$ is a scheme over $\operatorname{Spec}(\mathbb{Z}[\tfrac{1}{2}])$. 
  Then $\mathsf{cKW}$ is a commutative
  $\mathbf{1}[\tfrac{1}{\eta}]$-algebra which is 
  preserved under base change.
\end{theorem}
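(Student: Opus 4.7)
The statement has two parts: $\mathsf{cKW}$ carries a commutative $\mathbf{1}[\tfrac{1}{\eta}]$-algebra structure, and this structure commutes with base change along any morphism $f\colon S\to S'$ of $\mathbb{Z}[\tfrac{1}{2}]$-schemes in the sense that the canonical comparison $\phi\colon f^{\ast}\mathsf{cKW}_{S'}\to \mathsf{cKW}_S$ is an equivalence.

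For the algebra structure, the key ingredient is that the homotopy $t$-structure on $\mathbf{SH}(S)$ is compatible with the smash product: on generators, $\Sigma^{s,t}\Sigma^{\infty}X_+\wedge \Sigma^{s',t'}\Sigma^{\infty}Y_+\simeq \Sigma^{s+s',t+t'}\Sigma^{\infty}(X\times_S Y)_+$ has connectivity $(s-t)+(s'-t')$, so $\mathbf{SH}(S)_{\geq 0}$ is closed under $\wedge$ by closure of the generating set under smash, colimits, and extensions. Hence $\tau_{\geq 0}$ is lax symmetric monoidal; applied to the commutative algebra $\mathsf{KW}$ from Corollary~\ref{cor:kt-cellular}, it endows $\mathsf{cKW}$ with a commutative algebra structure, and the factorization $cu\colon \mathbf{1}[\tfrac{1}{\eta}]\to \mathsf{cKW}$ becomes an algebra map. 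To rigidify this to a strict commutative $\mathbf{1}[\tfrac{1}{\eta}]$-algebra one invokes the general framework of \cite{grso}, mentioned just above the theorem.

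For base change, the first observation is that $f^{\ast}$ is right $t$-exact: it sends a generator $\Sigma^{s,t}\Sigma^{\infty}X_+$ for $X/S'$ smooth to $\Sigma^{s,t}\Sigma^{\infty}(X\times_{S'}S)_+$ of the same connectivity, and it preserves colimits and cofiber sequences. Thus $f^{\ast}\mathsf{cKW}_{S'}$ is connective over $S$, and composing its map to $f^{\ast}\mathsf{KW}_{S'}\simeq \mathsf{KW}_S$ with the universal property of $\mathsf{cKW}_S$ produces $\phi$. To see $\phi$ is an equivalence, I would use that $\mathsf{KW}$ is $\alpha$-periodic with $\alpha\in \pi_{8,4}\mathsf{cKW}$ acting invertibly on $\mathsf{KW}$: one has $\mathsf{KW}\simeq \operatornamewithlimits{colim}\bigl(\mathsf{cKW}\xrightarrow{\alpha}\Sigma^{-8,-4}\mathsf{cKW}\xrightarrow{\alpha}\dotsm\bigr)$, so the cofiber $\tau_{<0}\mathsf{KW}=\mathsf{KW}/\mathsf{cKW}$ admits a presentation by cells in strictly negative $t$-degree, making it formally constructible from $\mathsf{cKW}$ by shifts $\Sigma^{-8k,-4k}$ for $k\geq 1$. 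Since $f^{\ast}$ commutes with colimits, preserves the canonical generator $\alpha$ (which is defined over $\mathbb{Z}[\tfrac{1}{2}]$ by Theorem~\ref{theorem:KQcellular}), and sends $\Sigma^{s,t}\mathbf{1}$ with $s-t<0$ to a spectrum in $\mathbf{SH}(S)_{<0}$, the pullback $f^{\ast}\tau_{<0}\mathsf{KW}_{S'}$ lies in $\mathbf{SH}(S)_{<0}$. The $t$-structure triangle $\mathsf{cKW}_{S'}\to \mathsf{KW}_{S'}\to \tau_{<0}\mathsf{KW}_{S'}$ is thus taken by $f^{\ast}$ to the truncation triangle for $\mathsf{KW}_S$, proving $\phi$ is an equivalence.

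The main obstacle is the final step, namely showing that $f^{\ast}$ carries the negative part $\tau_{<0}\mathsf{KW}$ back into $\mathbf{SH}(S)_{<0}$; right $t$-exactness of $f^{\ast}$ is formal, but left $t$-exactness is not automatic for general $f$, and must be established using the specific periodicity description above or, equivalently, a cell presentation of $\mathsf{cKW}$ in non-negative $t$-degree inherited from the cellularity of $\mathsf{KW}$.
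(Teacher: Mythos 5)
The algebra‐structure half of your argument agrees with what the paper does: the paper cites \cite{grso} for the multiplicative structure, and your unwinding via compatibility of the homotopy $t$-structure with $\wedge$, hence lax monoidality of $\tau_{\geq 0}$, plus \cite{grso} for rigidification, is the intended content.

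The base‐change half has a genuine gap. The paper disposes of it by citing \cite[Lemma 2.2]{hoyois}; your attempt to reprove it sets up the right reduction (construct $\phi\colon f^{\ast}\mathsf{cKW}_{S'}\to\mathsf{cKW}_S$ via right $t$-exactness, then argue $f^{\ast}\tau_{<0}\mathsf{KW}_{S'}\in\mathbf{SH}(S)_{<0}$), but the periodicity argument you offer for that last step does not close. The $\alpha$-filtration of $\tau_{<0}\mathsf{KW}$ has associated graded pieces $\Sigma^{-8j,-4j}(\mathsf{cKW}/\alpha)$ for $j\geq 1$, \emph{not} $\Sigma^{-8j,-4j}\mathsf{cKW}$; these are coconnective over $S'$ because $\mathsf{cKW}/\alpha$ is the $0$-truncation of $\mathsf{KW}$, but to conclude that $f^{\ast}(\mathsf{cKW}_{S'}/\alpha)$ is coconnective over $S$ you would again need $f^{\ast}$ to be left $t$-exact -- which is precisely the property you set out to avoid. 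So the $\alpha$-periodicity picture gives you nothing beyond right $t$-exactness, and the loop is not closed. The alternative you float in your final paragraph -- a cell presentation of $\mathsf{cKW}$ with cells of non-negative $t$-degree, preserved by $f^{\ast}$ -- would indeed suffice, but it is a nontrivial assertion that neither you nor the paper establishes: the paper only asserts cellularity for $\mathsf{KQ}$ and $\mathsf{KW}$ (Theorem~\ref{theorem:KQcellular} and Corollary~\ref{cor:kt-cellular}), not for $\mathsf{cKW}$, and cellularity together with connectivity does not automatically yield a cell structure whose cells have controlled $s-t$. So there is a real hole in your argument at exactly the step where the paper invokes \cite[Lemma 2.2]{hoyois}; you should either cite that result, as the paper does, or substitute a different argument (for example identifying $\mathsf{cKW}$ with the effective cover $\mathsf{f}_0\mathsf{KW}$ and proving that effective covers commute with $f^{\ast}$).
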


\begin{proof}
  The statement regarding the multiplicative structure
  follows from \cite{grso}. The base change argument
  for connective covers is given in \cite[Lemma 2.2]{hoyois}.
\end{proof}

\begin{lemma}\label{lem:ckt-kt}
  The canonical map $\mathsf{cKW}\to \mathsf{KW}$ coincides with
  the canonical map
  $\mathsf{cKW} \to \mathsf{cKW}[\alpha^{-1}]$ up to canonical 
  equivalence.
\end{lemma}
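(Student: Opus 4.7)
\emph{Proof plan.} The plan is to identify $\mathsf{cKW}[\alpha^{-1}]$ with $\mathsf{KW}$ via a canonical factorization of $\psi$. First I would construct an extension $\phi\colon \mathsf{cKW}[\alpha^{-1}]\to \mathsf{KW}$ of $\psi$ along the localization unit $u\colon \mathsf{cKW}\to \mathsf{cKW}[\alpha^{-1}]$, and then show that $\phi$ is a weak equivalence. The fact that $\phi\circ u = \psi$ holds by construction then realizes the coincidence of the two maps up to the canonical equivalence $\phi$.

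For the existence of $\phi$, the coefficient computation~(\ref{eq:kt-coeff}) shows that $\alpha$ acts invertibly on $\mathsf{KW}_{\ast,\ast}$, equivalently that multiplication by $\alpha$ yields an equivalence $\Sigma^{8,4}\mathsf{KW}\xrightarrow{\sim}\mathsf{KW}$. Hence $\mathsf{KW}$ is already $\alpha$-local, and the universal property of $\alpha$-localization (modelled by the mapping telescope along $\alpha$-multiplication) provides a unique $\mathbf{1}[\tfrac{1}{\eta}]$-algebra extension $\phi$ of $\psi$.

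To show $\phi$ is an equivalence, I would consider the cofiber sequence $\mathsf{cKW}\xrightarrow{\psi}\mathsf{KW}\to C$. By the very definition of the $0$-connective cover with respect to the homotopy $t$-structure, $C\in \mathbf{SH}(S)_{\leq -1}$, so $\pi_{s,t}C = 0$ whenever $s-t\geq 0$. Applying $(-)[\alpha^{-1}]$ to this cofiber sequence and using $\mathsf{KW}[\alpha^{-1}]\simeq \mathsf{KW}$ from the previous paragraph, it suffices to prove $C[\alpha^{-1}]\simeq 0$. Since $\alpha$ has bidegree $(8,4)$ with $s-t$-weight $4$, the telescope formula gives
\[ \pi_{s,t}C[\alpha^{-1}] \;=\; \operatornamewithlimits{colim}_n \pi_{s+8n,\,t+4n}C, \]
and for $n$ sufficiently large $(s-t)+4n\geq 0$, so every term in the colimit vanishes.

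The main obstacle is the $t$-structure bookkeeping. The containment $C\in \mathbf{SH}(S)_{\leq -1}$ is immediate from the construction of $\psi$ as the $0$-connective cover, but passing from vanishing of all bigraded homotopy groups of $C[\alpha^{-1}]$ to the conclusion $C[\alpha^{-1}]\simeq 0$ requires standard properties of the homotopy $t$-structure---most cleanly over a field via Morel's sheafified description of the stable homotopy groups, and for the general bases allowed by Theorem~\ref{thm:ckt-base} via the results of~\cite{hoyois} recalled in Section~\ref{sec:cell-presentations}.
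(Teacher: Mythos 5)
Your proposal is correct and follows essentially the same route as the paper: both proofs lift $\alpha$ to $\mathsf{cKW}$, use that $\alpha$ is already invertible on $\mathsf{KW}$ to obtain the comparison map $\mathsf{cKW}[\alpha^{-1}]\to\mathsf{KW}$, and conclude it is an equivalence by inspecting (sheaves of) homotopy groups. The paper simply asserts the isomorphism on homotopy sheaves, whereas you spell it out via the cofiber and the telescope-colimit computation $\underline{\pi}_{s,t}C[\alpha^{-1}]=\operatornamewithlimits{colim}_n\underline{\pi}_{s+8n,t+4n}C=0$; that is the same argument, just made explicit, and you correctly flag the sheaf-versus-group point needed to pass from vanishing homotopy to contractibility.
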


\begin{proof}
  By construction, $\alpha\colon S^{8,4} \to \mathsf{KW}$ is an
  invertible element. Moreover, it lifts to
  a map $S^{8,4}\to \mathsf{cKW}$ deserving the same notation. 
  Hence there is a canonical map $\mathsf{cKW}[\alpha^{-1}]\to \mathsf{KW}$
  which induces an isomorphism on sheaves of homotopy groups,
  whence the statement.
\end{proof}

Lemma~\ref{lem:ckt-kt} implies that the filtration
on $\mathsf{KW}$ given by the homotopy $t$-structure coincides with
multiplications by powers of $\alpha$ on suspensions of
$\mathsf{cKW}$. 
Thus the cone of multiplication by $\alpha$ on $\mathsf{cKW}$, 
henceforth
denoted $\mathsf{cKW}/\alpha$, coincides with the $0$-truncation
of $\mathsf{KW}$, or equivalently, with 
the motivic Eilenberg-MacLane
spectrum for the sheaf of unramified Witt groups.
In particular, over a field $F$ of characteristic not two,
it is a motivic spectrum whose
homotopy groups are concentrated in the zero line:
\[ \pi_{s,t}\mathsf{cKW}/\alpha \cong \begin{cases} W(F) & s=t \\ 0 & s\neq t
\end{cases}
\]
Moreover,
the canonical map $\mathsf{cKW}\to \mathsf{cKW}/\alpha$ is even 
a map of $\mathbf{1}[\tfrac{1}{\eta}]$-algebras.
The homotopy $t$-structure 
filtration of $\mathsf{KW}$ thus consists of $\mathsf{cKW}$-modules,
and its associated graded consists of (de)suspensions of $\mathsf{cKW}/\alpha$. 

\begin{lemma}\label{lem:real-points-hw}
  The canonical map $\mathbb{R}^\ast(\mathsf{cKW}/\alpha)\to H\mathbb{Z}[\tfrac{1}{2}]$
  is an equivalence of topological spectra.
\end{lemma}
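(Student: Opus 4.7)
The plan is to combine Joseph Ayoub's model for real Betti realization (Theorem~\ref{thm:ayoub-real-betti}) with the identification of $\mathsf{cKW}/\alpha$ with the motivic Eilenberg--MacLane spectrum $H\underline{W}$ on the unramified Witt sheaf, as recorded in the paragraph preceding the lemma. Modelling $H\underline{W}$ by a presheaf of complexes of abelian groups concentrated in degree zero, Ayoub's theorem identifies $\mathbb{R}^\ast(\mathsf{cKW}/\alpha)$ with the total complex of the cocubical abelian group $n\mapsto W(\mathcal{R}_n)$. The goal reduces to showing that, after inverting $2$, this cocubical object is equivalent to the constant one at $\mathbb{Z}[\tfrac{1}{2}]$, so that the resulting total complex represents $H\mathbb{Z}[\tfrac{1}{2}]$.

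The computational heart is to prove $W(\mathcal{R}_n)[\tfrac{1}{2}]\cong\mathbb{Z}[\tfrac{1}{2}]$. By Popescu, as already used in Proposition~\ref{prop:ayoub-popescu}, $\mathcal{R}_n$ is a filtered colimit of smooth $\mathbb{R}[t_1,\dots,t_n]$-algebras $A_i$. The natural equivalence $\mathsf{KW}_0[\tfrac{1}{2}](A_i)\simeq \mathsf{KO}[\tfrac{1}{2}](A_i(\mathbb{R}))$ from \cite[Theorem 6.2]{ksw} used in the proof of Theorem~\ref{thm:real-points-kt} specialises on $\pi_0$ to $W(A_i)[\tfrac{1}{2}]\cong \pi_0\mathsf{KO}[\tfrac{1}{2}](A_i(\mathbb{R}))$. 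Passing to the filtered colimit over $i$ and invoking continuity of topological $\mathsf{KO}$ together with the fact that the $A_i(\mathbb{R})$ form a cofinal system of open neighborhoods of the contractible cube $[0,1]^n$ yields
\[
W(\mathcal{R}_n)[\tfrac{1}{2}]\;\cong\;\pi_0\mathsf{KO}[\tfrac{1}{2}]([0,1]^n)\;\cong\;\mathbb{Z}[\tfrac{1}{2}].
\]

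Naturality in $n$ is automatic: the cofaces $\mathcal{R}_n\hookrightarrow\mathcal{R}_{n+1}$ realise on real points as maps between contractible cubes, and thus induce the identity on $\pi_0\mathsf{KO}[\tfrac{1}{2}]$. Consequently the cocubical abelian group $W(\mathcal{R}_\bullet)[\tfrac{1}{2}]$ is canonically the constant one at $\mathbb{Z}[\tfrac{1}{2}]$, its total complex is quasi-isomorphic to $\mathbb{Z}[\tfrac{1}{2}]$ concentrated in degree zero, and this provides the desired equivalence $\mathbb{R}^\ast(\mathsf{cKW}/\alpha)[\tfrac{1}{2}]\simeq H\mathbb{Z}[\tfrac{1}{2}]$; the canonical map arises from the $\pi_0$-signature.

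The main obstacle is the continuity step for Witt theory: one has to verify that the comparison map of \cite[Theorem 6.2]{ksw} is compatible with filtered colimits of smooth $\mathbb{R}$-algebras and extends naturally to the pro-smooth scheme $\operatorname{Spec}(\mathcal{R}_n)$ in a way that respects the cocubical structure $\mathcal{R}_\bullet$. Once that technical point is settled, the identification of the total complex with $H\mathbb{Z}[\tfrac{1}{2}]$ is essentially formal.
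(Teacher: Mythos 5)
Your overall strategy---invoking Ayoub's model (Theorem~\ref{thm:ayoub-real-betti}) to reduce to computing the cubical object $n\mapsto W(\mathcal{R}_n)$---matches the second half of the paper's proof, but the route you take for the Witt group computation is genuinely different and has a real gap, and you skip the first half of the paper's argument entirely.

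For the Witt group computation, the paper proves $W(\mathcal{R}_n)\cong\mathbb{Z}$ \emph{integrally} and elementarily in Lemma~\ref{lem:witt-groups-real-betti}: shrinking the cube defines a filtered system of copies of $\mathcal{R}_n$, any restriction map in this system is an isomorphism on Witt groups by a homotopy interpolating between cube diameters, and since $W$ commutes with filtered colimits one gets $W(\mathcal{R}_n)\cong W(\mathbb{R}[\![t_1,\dots,t_n]\!])\cong\mathbb{Z}$. You instead propose to feed the comparison equivalence of \cite[Theorem 6.2]{ksw}, which is stated for finite type $\mathbb{R}$-schemes, through a filtered colimit from the $A_i$ to $\mathcal{R}_n$ and then appeal to a continuity property of topological $\mathsf{KO}$. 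You flag this yourself as the main obstacle, and it is one: establishing continuity on both sides of the KSW comparison in a way compatible with the cocubical structure is not formal, and it is precisely the work the paper's Lemma~\ref{lem:witt-groups-real-betti} is engineered to avoid. Your route is heavier and only yields a $2$-inverted answer $W(\mathcal{R}_n)[\tfrac{1}{2}]\cong\mathbb{Z}[\tfrac{1}{2}]$, whereas the paper's integral computation is what makes the remark after the lemma possible.

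There is also a missing step concerning what Ayoub's theorem actually delivers. Theorem~\ref{thm:ayoub-real-betti} computes the real Betti realization of the \emph{presheaf of complexes} $W$, i.e.\ the $S^1$-level object; the complex the paper obtains is $\mathbb{Z}\xleftarrow{0}\mathbb{Z}\xleftarrow{\mathrm{id}}\mathbb{Z}\xleftarrow{0}\dotsm$, with homology $\mathbb{Z}$ (not $\mathbb{Z}[\tfrac{1}{2}]$) concentrated in degree zero. Passing from this to $\mathbb{R}^\ast$ of the motivic spectrum $\mathsf{cKW}/\alpha$ requires accounting for the spectrum structure maps, which is where $\tfrac{1}{2}$ enters. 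The paper handles the low-dimensional homotopy groups separately: Corollary~\ref{cor:morel} gives $1$-connectivity of $\mathbf{1}[\tfrac{1}{\eta}]\to\mathsf{cKW}$, hence of its realization $\mathbb{S}[\tfrac{1}{2}]\to\mathbb{R}^\ast\mathsf{cKW}$, and combined with the $4$-connectivity of $\mathsf{cKW}\to\mathsf{cKW}/\alpha$ this identifies $\pi_0\mathbb{R}^\ast(\mathsf{cKW}/\alpha)\cong\mathbb{Z}[\tfrac{1}{2}]$, $\pi_1=0$, and exhibits the canonical map of the lemma as the zeroth Postnikov truncation. Ayoub's theorem is then used only to kill $\pi_n$ for $n\geq 2$. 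Your proposal attributes the whole identification, including the appearance of $\mathbb{Z}[\tfrac{1}{2}]$ and the existence of the canonical map, to the cubical computation alone; this conflates the $S^1$- and $\mathbf{P}^1$-spectrum levels of realization, and the hand wave ``the canonical map arises from the $\pi_0$-signature'' does not substitute for the connectivity argument.
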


\begin{proof}
  The algebraic Hopf map $\eta$ induces the degree 2 map
  on the topological sphere spectrum after taking real points.
  Morel's Theorem, see~\ref{cor:morel}, implies 
  the map $\mathbf{1}[\tfrac{1}{\eta}]\to \mathsf{cKW}$
  is 1-connective and hence induces a 1-connective map 
  \[ \mathbb{S}[\tfrac{1}{2}] = \mathbb{R}^\ast(\mathbf{1}[\tfrac{1}{\eta}]) \to \mathbb{R}^\ast \mathsf{cKW} \]
  of topological spectra. This 
  identifies $\pi_0 \mathbb{R}^\ast \mathsf{cKW}$ as 
  $\mathbb{Z}[\tfrac{1}{2}]$ (and
  $\pi_1\mathbb{R}^\ast\mathsf{cKW}$ as the trivial group). 
  Since the canonical map $\mathsf{cKW} \to \mathsf{cKW}/\alpha$
  is 4-connective, also $\pi_0 \mathbb{R}^\ast \mathsf{cKW}/\alpha \cong \mathbb{Z}[\tfrac{1}{2}]$ (and
  $\pi_1\mathbb{R}^\ast\mathsf{cKW}/\alpha =0$).
  The canonical map mentioned in the
  statement of the lemma is the map to the zeroth Postnikov section.
  It remains to show that $\pi_n\mathbb{R}^\ast \mathsf{cKW}/\alpha = 0$ for all $n\geq 2$.
  For this purpose, recall that $\mathsf{cKW}/\alpha$ is the Eilenberg-MacLane
  spectrum associated to the sheaf of unramified
  Witt groups on $\mathbf{Sm}_{\mathbb{R}}$.
  Hence its real Betti realization is determined by the real Betti
  realization of the sheaf $W$ of Witt groups, considered as a complex
  concentrated in degree zero, and the spectrum structure maps. 
  Theorem~\ref{thm:ayoub-real-betti} allows to identify the
  real Betti realization of $W$ as the complex $W(\operatorname{Spec}(\mathcal{R_\bullet}))$,
  which turns out to be the complex
  \[ \mathbb{Z} \xleftarrow{0} \mathbb{Z}\xleftarrow{\mathrm{id} } \mathbb{Z} \xleftarrow{0} \mathbb{Z} \xleftarrow{\mathrm{id}} \dotsm \]
  by Lemma~\ref{lem:witt-groups-real-betti} below.
  The result follows.
\end{proof}

The statement of Lemma~\ref{lem:real-points-hw} 
for $\mathsf{cKW}[\tfrac{1}{2}]/\alpha$ is mentioned 
below the proof of \cite[Proposition~29]{bachmann.conservative}. 
Note that the proof of Lemma~\ref{lem:real-points-hw}
supplies an integral identification of the real points of
the $S^1$-Eilenberg-MacLane spectrum for the sheaf of
Witt groups.

\begin{lemma}\label{lem:witt-groups-real-betti}
  The inclusions 
  \[ \mathbb{R}\hookrightarrow \mathbb{R}[t_1,\dotsc,t_n]\hookrightarrow \mathcal{R}_n 
  \hookrightarrow \mathbb{R}\llbracket t_1,\dotsc,t_n\rrbracket \]
  induce isomorphisms on Witt groups for every $n$.
\end{lemma}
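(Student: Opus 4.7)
The plan is to establish that $W(R) \cong W(\mathbb{R}) = \mathbb{Z}$ for each intermediate ring $R$ in the chain; once this is known, commutativity forces every inclusion-induced map to be an isomorphism. The crucial preliminary observation is that each of the three rings $R$ admits evaluation at the origin $(0,\ldots,0)$ as a ring retraction $R \to \mathbb{R}$ of the inclusion $\mathbb{R} \hookrightarrow R$. Consequently $W(\mathbb{R}) \to W(R)$ is always split injective, and only surjectivity needs to be verified in each case.

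For $R = \mathbb{R}[t_1,\ldots,t_n]$, surjectivity is Karoubi's homotopy invariance theorem for Witt groups of regular rings containing $\tfrac{1}{2}$, applied $n$ times. For $R = \mathbb{R}\llbracket t_1,\ldots,t_n\rrbracket$, any non-degenerate symmetric bilinear form over this complete local ring diagonalizes with unit entries $u_i$, and Hensel's lemma applied to $x^2 \mp u_i$---whose residual polynomial has the simple root $\sqrt{\lvert u_i(0)\rvert} \in \mathbb{R}$---produces a square root of $u_i$ or of $-u_i$ according to the sign of the constant term $u_i(0) \in \mathbb{R}^{\times}$. Hence every class $\langle u_i\rangle$ lies in $\{\langle 1\rangle, \langle -1\rangle\}$, so $W(\mathbb{R}) \to W(\mathbb{R}\llbracket t_1,\ldots,t_n\rrbracket)$ is surjective.

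For $R = \mathcal{R}_n$, the analogous unit-by-unit analysis succeeds: a unit $u \in \mathcal{R}_n^{\times}$ is a real analytic function non-vanishing on some connected open neighborhood $U$ of $[0,1]^n$ (connected because $[0,1]^n$ is), hence of constant sign on $U$, and $\exp\bigl(\tfrac{1}{2}\log\lvert u\rvert\bigr)$ supplies a real analytic square root of $\pm u$, showing that $\langle u\rangle$ already lies in the image of $W(\mathbb{R})$. The main obstacle is the diagonalization of forms over the non-local ring $\mathcal{R}_n$: Gram--Schmidt is not automatic, because $b(v,v)$ being nonzero at the origin does not guarantee that it is a unit throughout $[0,1]^n$. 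I would circumvent this by proving injectivity of $W(\mathcal{R}_n) \to W(\mathbb{R}\llbracket t_1,\ldots,t_n\rrbracket)$ in place of direct diagonalization: a form that becomes hyperbolic over the formal completion at the origin admits a Lagrangian subbundle there, which is a formal power series solution to the Lagrangian Grassmannian equations, and Artin approximation---or equivalently a real analytic Oka principle applied to the Lagrangian Grassmannian, using that the signature is integer-valued and locally constant, hence constant on the connected set $[0,1]^n$---globalizes this to a Lagrangian subbundle over $\mathcal{R}_n$. Combined with the split injection $W(\mathbb{R}) \hookrightarrow W(\mathcal{R}_n)$ and the iso $W(\mathbb{R}) \cong W(\mathbb{R}\llbracket t_1,\ldots,t_n\rrbracket)$ established above, this forces $W(\mathcal{R}_n) = \mathbb{Z}$ and completes the chain.
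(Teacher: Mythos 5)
Your argument reaches the same conclusion by a genuinely different route. The paper handles $\mathcal{R}_n$ via the filtered-colimit structure: shrinking $[0,1]^n$ to $[0,\delta]^n$ gives a filtered system of algebras, all isomorphic to $\mathcal{R}_n$ by rescaling, whose colimit is the henselian local ring of germs of real analytic functions at the origin; the restriction maps induce Witt isomorphisms by a homotopy interpolating between the two cube diameters, and since the Witt group commutes with filtered colimits, $W(\mathcal{R}_n)$ is identified with the Witt group of a henselian local ring with residue field $\mathbb{R}$, which is $\mathbb{Z}$ by rigidity; homotopy invariance and rigidity then dispose of the remaining inclusions. You share the split-injectivity-via-retraction observation and the standard treatments of $\mathbb{R}[t_1,\dotsc,t_n]$ and $\mathbb{R}\llbracket t_1,\dotsc,t_n\rrbracket$, but you attack $\mathcal{R}_n$ head-on through signature constancy on the connected cube plus a real analytic Oka principle to manufacture a global Lagrangian. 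The paper's ingredients are comparatively light---homotopy invariance and rigidity of Witt groups---whereas your route invokes Grauert's Oka principle for real analytic bundles via complexification to a Stein neighborhood, a substantially heavier input, though it does give more direct geometric control over the forms involved.

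One caution on the $\mathcal{R}_n$ step as you phrase it: Artin approximation and the Oka principle are not ``equivalent'' and do not do the same job. Artin approximation would only upgrade the formal Lagrangian to a Lagrangian over the ring of germs at the origin---a purely local improvement---and by itself cannot produce a Lagrangian over all of $\mathcal{R}_n$; a local-to-global step would still be missing. The signature-plus-Oka route you sketch in the same sentence is the one that actually closes the argument, so the claimed equivalence should be dropped and the two mechanisms disentangled. A minor further point: an open neighborhood of $[0,1]^n$ need not be connected, so before asserting constant sign of a unit you must pass to the connected component containing the cube (which is harmless, since such neighborhoods are cofinal).
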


\begin{proof}
  By construction, $\mathcal{R}_0 = \mathbb{R}$. Shrinking the cube defines
  a filtered system of intermediate
  algebras between $\mathcal{R}_n$ and the formal
  power series ring $\mathbb{R}\llbracket t_1,\dotsc,t_n\rrbracket$
  whose colimit is the ring of germs of real analytic functions at
  zero. The
  intermediate algebras are all isomorphic to $\mathcal{R}_n$. 
  Moreover, the restriction homomorphism between any two such
  defines an isomorphism on Witt groups, as one concludes 
  by a homotopy interpolating between the two cube diameters.
  Since the Witt group commutes with filtered colimits, the result follows.
\end{proof}

\begin{corollary}\label{cor:real-points-ckt}
  There is an equivalence $\mathbb{R}^\ast \mathsf{cKW}[\tfrac{1}{2}] \simeq 
  \mathsf{ko}[\tfrac{1}{2}]$ of
  topological spectra. 
  The
  unit $\mathsf{cKW}[\tfrac{1}{2}]\to \mathbb{R}_\ast\mathbb{R}^\ast \mathsf{cKW}[\tfrac{1}{2}]\simeq \mathbb{R}_\ast(\mathsf{ko}[\tfrac{1}{2}])$ 
  is an equivalence
  in $\mathbf{SH}(\mathbb{R})$.
\end{corollary}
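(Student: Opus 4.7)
The plan is to first identify $\mathbb{R}^\ast\mathsf{cKW}[\tfrac{1}{2}]$ with $\mathsf{ko}[\tfrac{1}{2}]$, and then to deduce that the adjunction unit is an equivalence by an $\alpha$-inversion and $\alpha$-quotient devissage.

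For the first identification, I apply the exact, symmetric monoidal functor $\mathbb{R}^\ast$ to the defining cofiber sequence
\[ \Sigma^{8,4}\mathsf{cKW}[\tfrac{1}{2}] \xrightarrow{\alpha} \mathsf{cKW}[\tfrac{1}{2}] \to \mathsf{cKW}[\tfrac{1}{2}]/\alpha. \]
Using $\mathbb{R}^\ast S^{8,4}\simeq S^4$ and Lemma~\ref{lem:real-points-hw}, the result is a cofiber sequence of the same shape as the defining one $\Sigma^4\mathsf{ko}[\tfrac{1}{2}]\xrightarrow{\alpha_{\mathrm{top}}}\mathsf{ko}[\tfrac{1}{2}]\to H\mathbb{Z}[\tfrac{1}{2}]$ of connective real $K$-theory. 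The proof of Lemma~\ref{lem:real-points-hw} already exhibits $\mathbb{R}^\ast\mathsf{cKW}[\tfrac{1}{2}]$ as $0$-connective with $\pi_0\cong\mathbb{Z}[\tfrac{1}{2}]$, so the composite $\mathbb{R}^\ast\mathsf{cKW}[\tfrac{1}{2}]\to\mathbb{R}^\ast\mathsf{KW}[\tfrac{1}{2}]\simeq\mathsf{KO}[\tfrac{1}{2}]$ provided by Lemma~\ref{lem:ckt-kt} and Theorem~\ref{thm:real-points-kt} factors uniquely through the connective cover $\mathsf{ko}[\tfrac{1}{2}]\hookrightarrow\mathsf{KO}[\tfrac{1}{2}]$. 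Invertibility of $\mathbb{R}^\ast\alpha$ in $\pi_4\mathsf{KO}[\tfrac{1}{2}]$ (as the image of an invertible element under the monoidal $\mathbb{R}^\ast$) forces its lift to $\pi_4\mathsf{ko}[\tfrac{1}{2}]\cong\mathbb{Z}[\tfrac{1}{2}]$ to be a unit, and a direct computation of homotopy groups from the two cofiber sequences shows the factored comparison $\mathbb{R}^\ast\mathsf{cKW}[\tfrac{1}{2}]\to\mathsf{ko}[\tfrac{1}{2}]$ is an isomorphism on every $\pi_n$, hence an equivalence.

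For the second part, let $C$ denote the cofiber of the unit $u\colon\mathsf{cKW}[\tfrac{1}{2}]\to\mathbb{R}_\ast\mathsf{ko}[\tfrac{1}{2}]$; it suffices to show $C\simeq 0$. Since $\mathbb{R}^\ast$ sends the compact generators $\Sigma^{s,t}\Sigma^\infty X_+$ of $\mathbf{SH}(\mathbb{R})$ with $X\in\mathbf{Sm}_\mathbb{R}$ to suspension spectra of the finite CW-complexes $X(\mathbb{R})$, the right adjoint $\mathbb{R}_\ast$ preserves filtered colimits and therefore commutes with $\alpha$-inversion. Combined with Theorem~\ref{thm:real-points-kt} this forces $C[\tfrac{1}{\alpha}]\simeq 0$. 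The projection formula of Theorem~\ref{thm:real-points} and the topological identification $\mathsf{ko}[\tfrac{1}{2}]/\alpha_{\mathrm{top}}\simeq H\mathbb{Z}[\tfrac{1}{2}]$ produce $\mathbb{R}_\ast\mathsf{ko}[\tfrac{1}{2}]/\alpha\simeq\mathbb{R}_\ast H\mathbb{Z}[\tfrac{1}{2}]$, so the vanishing of $C/\alpha$ reduces to proving that the unit
\[ \mathsf{cKW}[\tfrac{1}{2}]/\alpha\to\mathbb{R}_\ast H\mathbb{Z}[\tfrac{1}{2}] \]
from the motivic Eilenberg-MacLane spectrum of the sheaf of Witt groups (with $2$ inverted) is an equivalence. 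Once both $C[\tfrac{1}{\alpha}]$ and $C/\alpha$ vanish, the cofiber sequence $\Sigma^{8,4}C\xrightarrow{\alpha}C\to C/\alpha$ makes $\alpha$ an equivalence on $C$, and hence $C\simeq\operatornamewithlimits{colim}_n \Sigma^{-8n,-4n}C\simeq C[\tfrac{1}{\alpha}]\simeq 0$.

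The hardest step is this remaining Eilenberg-MacLane comparison: it amounts to showing that the signature map identifies the Nisnevich cohomology of the sheaf of Witt groups on smooth $\mathbb{R}$-schemes (after inverting $2$) with singular cohomology of real points with $\mathbb{Z}[\tfrac{1}{2}]$-coefficients, and is the step where classical Jacobson/Knebusch-type input genuinely enters. Ayoub's model (Theorem~\ref{thm:ayoub-real-betti}) together with the Witt-group computation of Lemma~\ref{lem:witt-groups-real-betti} handles the local blocks $\operatorname{Spec}(\mathcal{R}_n)$; the remaining delicate work is to extend the identification to arbitrary smooth $\mathbb{R}$-schemes and to verify it coincides with the adjunction unit.
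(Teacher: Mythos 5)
Your proof of the first assertion (the equivalence $\mathbb{R}^\ast \mathsf{cKW}[\tfrac{1}{2}]\simeq \mathsf{ko}[\tfrac{1}{2}]$) is essentially correct and close in spirit to what the paper does: realize the $\alpha$-cofiber sequence for $\mathsf{cKW}[\tfrac{1}{2}]$, identify the cofiber via Lemma~\ref{lem:real-points-hw}, and compare with the analogous sequence $\Sigma^4\mathsf{ko}[\tfrac{1}{2}]\to\mathsf{ko}[\tfrac{1}{2}]\to H\mathbb{Z}[\tfrac{1}{2}]$. (The paper's proof is terser, invoking compatibility of $(\mathbb{R}^\ast,\mathbb{R}_\ast)$ with the homotopy $t$-structure, but the ingredients are the same.)

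For the second assertion your $\alpha$-devissage (showing $C[\tfrac{1}{\alpha}]\simeq 0$ and $C/\alpha\simeq 0$ for the cofiber $C$ of the unit) is a genuinely different and attractive strategy compared with the paper's one-line invocation of $t$-structure compatibility, and the reduction itself is sound: $\mathbb{R}^\ast$ sends the compact generators $\Sigma^{s,t}\Sigma^\infty X_+$ to finite spectra (since $X(\mathbb{R})$ admits a finite semialgebraic triangulation), so $\mathbb{R}_\ast$ preserves filtered colimits and commutes with $\alpha$-localization, whence $C[\tfrac{1}{\alpha}]\simeq 0$ by Theorem~\ref{thm:real-points-kt} and Lemma~\ref{lem:ckt-kt}, and the projection formula identifies $\mathbb{R}_\ast(\mathsf{ko}[\tfrac{1}{2}])/\alpha\simeq\mathbb{R}_\ast H\mathbb{Z}[\tfrac{1}{2}]$. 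However, you then leave the crucial remaining step open: you reduce everything to the claim that the unit $\mathsf{cKW}[\tfrac{1}{2}]/\alpha\to\mathbb{R}_\ast H\mathbb{Z}[\tfrac{1}{2}]$ is an equivalence and state explicitly that ``the remaining delicate work is to extend the identification to arbitrary smooth $\mathbb{R}$-schemes and to verify it coincides with the adjunction unit.'' This is a genuine gap: Lemma~\ref{lem:real-points-hw} computes $\mathbb{R}^\ast(\mathsf{cKW}/\alpha)$, but says nothing about the unit map, and nothing in your argument supplies the missing comparison. Without closing it you have not proved the second assertion, only reduced it to a statement that is logically of the same kind as the one you are trying to prove.

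The gap is closeable within the paper's framework. Since $\mathsf{cKW}[\tfrac{1}{2}]/\alpha$ is a $\mathsf{cKW}[\tfrac{1}{2}]$-module, Proposition~\ref{prop:ckw-modules-real-etale} (relying on \cite[Theorem~6.6]{ksw}) shows that a Nisnevich-fibrant model of it is already real \'etale fibrant; and for real \'etale local objects of $\mathbf{SH}(\mathbb{R})$ the real Betti adjunction unit is an equivalence, which is the main content of \cite{bachmann.real}. Supplying this one input would complete your devissage. Note, though, that Proposition~\ref{prop:ckw-modules-real-etale} appears only in the following section of the paper and is not cited in the paper's own proof of the present corollary, so if you want your argument to be self-contained at this point you would need to either import that proposition or replace it with a direct $t$-structure argument in the spirit of the paper's one-line proof.
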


\begin{proof}
  This follows from Theorem~\ref{thm:real-points-kt}, 
  Lemma~\ref{lem:real-points-hw}, 
  and the compatibility of $(\mathbb{R}^\ast,\mathbb{R}_\ast)$ with
  the homotopy $t$-structure.
\end{proof}

Over the complex numbers, a very explicit small cell presentation of $\mathsf{cKW}$
can be given, thanks to the following 
fantastic theorem \cite{guillou-isaksen}, \cite{andrews-miller}.
This cell presentation will not be used in the remaining sections.

\begin{theorem}[Andrews-Miller]
\label{thm:gi-am}
  Over $\mathbb{C}$, 
  the graded ring $\pi_{\ast,\ast}\mathbf{1}[\tfrac{1}{\eta}]$ is isomorphic
  to the ring $\mathbb{F}_2[\eta,\eta^{-1},\sigma,\mu_9]/\eta\sigma^2$
  where $\lvert \eta\rvert = (1,1)$, $\lvert \sigma\rvert = (7,4)$,
  and $\lvert \mu_9\rvert = (9,5)$.
\end{theorem}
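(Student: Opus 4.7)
The plan is to compute $\pi_{\ast,\ast}\mathbf{1}^\wedge_2[\tfrac{1}{\eta}]$ via the motivic Adams spectral sequence over $\mathbb{C}$, and then to pass from the $2$-completed to the integral $\eta$-inverted homotopy. Multiplication by $4$ is $\eta^2$-divisible in a way that forces $\pi_{\ast,\ast}\mathbf{1}[\tfrac{1}{\eta}]$ to be $2$-torsion (since $4$ is annihilated on the relevant classes by the relation $\eta^2\cdot 2 = \eta^3$-type phenomena coming from Morel's identification of the zero line with Milnor-Witt K-theory), so the $2$-adic computation determines the integral answer, and Hu-Kriz-Ormsby convergence \cite[Theorem 1]{hu-kriz-ormsby} guarantees that the motivic Adams spectral sequence computes the $2$-completed bigraded homotopy.

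First I would write down the $E_2$-page of the $\mathbb{C}$-motivic Adams spectral sequence as $\operatorname{Ext}_{A^{\ast,\ast}}^{\ast,\ast,\ast}(\mathbb{F}_2,\mathbb{F}_2)$ over the motivic Steenrod algebra $A^{\ast,\ast}$ (known explicitly over $\mathbb{C}$ by Voevodsky), and then invert the Adams $E_2$-class $h_1$ representing $\eta$. The key algebraic input is the Andrews-Miller calculation of this $h_1$-inverted $\operatorname{Ext}$ algebra: they show it is a polynomial ring on classes $h_1^{\pm 1}$, $v_1^4$ (of bidegree matching $\sigma$ after taking Adams filtration into account), and a higher class corresponding to $\mu_9$, modulo the relation coming from $\eta\sigma^2 = 0$. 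Their proof uses a chromatic-style filtration and the algebraic Novikov spectral sequence, or equivalently the comparison of $h_1$-inverted motivic $\operatorname{Ext}$ with a shifted copy of the classical $\operatorname{Ext}$ computing $\pi_\ast\mathsf{ko}$-type information.

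Next I would resolve the differentials and the extensions in the $h_1$-inverted motivic Adams spectral sequence. Here the main obstacle is to verify that no nontrivial differentials hit the generators $\sigma$ and $\mu_9$: the absence of room for such differentials is forced by comparison with the $C_2$-equivariant (or real Betti) target, and by the Guillou-Isaksen analysis of the $\eta$-inverted motivic Adams spectral sequence, which shows it collapses at $E_2$ in the relevant bidegrees after inverting $h_1$. The multiplicative extensions are controlled by the fact that the resulting ring is concentrated in even Adams filtration after inverting $h_1$, so no hidden $2$-extensions appear, and $\eta\sigma^2 = 0$ is forced already on the $E_\infty$ page.

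Finally, I would assemble the computation: the $\eta$-inverted $2$-complete bigraded homotopy ring is $\mathbb{F}_2[\eta^{\pm 1},\sigma,\mu_9]/(\eta\sigma^2)$ with the indicated bidegrees. To conclude for the integral $\eta$-inverted sphere, I would check that the natural map $\pi_{\ast,\ast}\mathbf{1}[\tfrac{1}{\eta}] \to \pi_{\ast,\ast}\mathbf{1}^\wedge_2[\tfrac{1}{\eta}]$ is an isomorphism; this follows because over $\mathbb{C}$ the Witt ring is $\mathbb{Z}/2$, so by the $\mathsf{cKW}$-resolution and the vanishing/torsion information on the associated graded the groups $\pi_{\ast,\ast}\mathbf{1}[\tfrac{1}{\eta}]$ are already $\mathbb{F}_2$-vector spaces. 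The main conceptual obstacle is the initial $h_1$-periodic $\operatorname{Ext}$ computation of Andrews-Miller; once that is available, the differential and extension analysis is essentially forced by sparseness.
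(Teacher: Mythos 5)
The paper offers no proof of Theorem~\ref{thm:gi-am}: the statement is cited wholesale from \cite{guillou-isaksen} and \cite{andrews-miller}, whose motivic Adams spectral sequence and classical $\alpha_1$-inverted Adams--Novikov computations together supply it. The only thing the paper adds is the one-line remark immediately after the statement: a priori those references produce $\pi_{\ast,\ast}\mathbf{1}[\tfrac{1}{\eta}]^\wedge_2$, but since $\pi_{0,0}\mathbf{1}[\tfrac{1}{\eta}]\cong W(\mathbb{C})=\mathbb{Z}/2$ by Morel's Theorem~\ref{thm:morel}, the unit $2$ vanishes in $\pi_{0,0}$, and a ring spectrum in which $2$ is zero in $\pi_0$ is automatically $2$-complete. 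That single observation is the whole passage from the $2$-complete answer to the integral one.

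Your proposal reconstructs, in outline, the Guillou--Isaksen spectral sequence argument; that is indeed the content of the cited references, so there is no divergence of approach there, though your claim that the differentials and hidden extensions are ``essentially forced by sparseness'' undersells the difficulty (Guillou--Isaksen left some uncertainty that Andrews--Miller's independent classical computation was needed to resolve). Two places where your handling of the $2$-completeness step should be sharpened. First, the relation you invoke (``$\eta^2\cdot 2 = \eta^3$-type phenomena'') is not the right one: the Milnor--Witt relation is $\eta h=0$ with $h=2+\rho\eta$, and over $\mathbb{C}$ one has $\rho=0$, so that $2\eta=0$, hence $2=0$ after inverting $\eta$. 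Second, routing through the $\mathsf{cKW}$-resolution of Section~\ref{sec:an-adams-resolution} to show that $\pi_{\ast,\ast}\mathbf{1}[\tfrac{1}{\eta}]$ is an $\mathbb{F}_2$-vector space is far more machinery than needed; once you have $\pi_{0,0}\mathbf{1}[\tfrac{1}{\eta}]\cong\mathbb{Z}/2$, the $2$-completeness of the ring spectrum follows immediately, which is exactly the paper's one-sentence argument.
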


A priori, the Andrews-Miller computation produces the homotopy groups of
$\mathbf{1}[\tfrac{1}{\eta}]^\wedge_2$, but since $\pi_{0,0}\mathbf{1}[\tfrac{1}{\eta}]\cong \mathbb{Z}/2$
over $\mathbb{C}$
by Morel's Theorem~\ref{thm:morel}, $\mathbf{1}[\tfrac{1}{\eta}]$ 
is already 2-complete over $\mathbb{C}$.
Theorem~\ref{thm:gi-am}
implies the following statement on the slices of $\mathbf{1}[\tfrac{1}{\eta}]$, as explained in 
\cite[Theorem 2.34]{rso.sphere}.

\begin{theorem}
  \label{theorem:slices-eta-inv-sphere}
  Suppose $S$ is a scheme over $\operatorname{Spec}(\mathbb{Z}[\tfrac{1}{2}])$. 
  There is a splitting of the zero slice of the $\eta$-inverted sphere spectrum 
  \[ 
  \mathsf{s}_0(\mathbf{1}[\tfrac{1}{\eta}])
  \cong
  \bigvee_{1\neq n\geq 0} \Sigma^{n,0}  \mathsf{M} \mathbb{Z}/2.
  \]
  such that the unit $\mathbf{1}[\tfrac{1}{\eta}] \to \mathsf{KW}$ induces
  the inclusion on every even summand.
\end{theorem}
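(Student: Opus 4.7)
The plan is to reduce the statement to the case $S = \operatorname{Spec}(\mathbb{C})$ via base change, and then to extract the zero slice from the Andrews-Miller ring of Theorem~\ref{thm:gi-am}; this is the strategy executed in detail in \cite[Theorem~2.34]{rso.sphere}.

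First I would establish base change compatibility. The $\eta$-inverted sphere, the slice functor $\mathsf{s}_0$, the Eilenberg-MacLane spectrum $\mathsf{M}\mathbb{Z}/2$, and the Witt spectrum $\mathsf{KW}$ all commute with pullback along morphisms $S \to S'$ of schemes over $\operatorname{Spec}(\mathbb{Z}[\tfrac{1}{2}])$; see \cite[Section~2.1]{hoyois} for slices and Corollary~\ref{cor:kt-cellular} for Witt theory. Since both sides of the claimed equivalence are cellular and pulled back from $\operatorname{Spec}(\mathbb{Z}[\tfrac{1}{2}])$, it suffices to produce the splitting, along with the identification of the image of the unit, over $\operatorname{Spec}(\mathbb{C})$ and then transport it along the structural diagram $S \to \operatorname{Spec}(\mathbb{Z}[\tfrac{1}{2}]) \leftarrow \operatorname{Spec}(\mathbb{C})$.

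Over $\mathbb{C}$, Theorem~\ref{thm:gi-am} combined with invertibility of $\eta$ (which turns the relation $\eta\sigma^2 = 0$ into $\sigma^2 = 0$) gives
\[ \pi_{\ast,\ast}\mathbf{1}[\tfrac{1}{\eta}] \cong \mathbb{F}_2[\eta^{\pm 1}, \sigma, \mu_9]/(\sigma^2), \]
so $\mathbf{1}[\tfrac{1}{\eta}]$ is $2$-torsion (consistent with $\pi_{0,0} = W(\mathbb{C}) = \mathbb{F}_2$). Therefore $\mathsf{s}_0(\mathbf{1}[\tfrac{1}{\eta}])$ is a module over $\mathsf{M}\mathbb{Z}/2$ in the heart of the slice filtration, and as such splits as a wedge of copies of $\Sigma^{s,t}\mathsf{M}\mathbb{Z}/2$. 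Tracking the slice filtration stem-by-stem identifies exactly one $\Sigma^{n,0}\mathsf{M}\mathbb{Z}/2$-summand for each $n \geq 0$ with $n \neq 1$: the classes $\mu_9^k$ together with $\eta$-periodic neighbours populate the even summands, the classes $\sigma\mu_9^k$ populate the odd summands starting at $n = 3$, and the stem $n = 1$ is absent because $\sigma^2 = 0$ eliminates the only candidate generator there.

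For the Witt-theoretic statement, the induced map on zero slices sends the $\mu_9$-tower to the $\alpha$-tower in $\pi_{\ast,\ast}\mathsf{KW} \cong \mathbb{F}_2[\eta^{\pm 1}, \alpha^{\pm 1}]$, so the image of the unit is precisely the wedge of even summands. The main obstacle lies in the third step: the Andrews-Miller ring records bigraded homotopy groups of $\mathbf{1}[\tfrac{1}{\eta}]$, not directly of its zero slice, so one has to either run a slice spectral sequence argument or construct the splitting by an explicit attaching procedure combined with connectivity bookkeeping. Handling this technical step carefully, and verifying that the resulting splitting is preserved under the transport from $\mathbb{C}$ to a general base over $\mathbb{Z}[\tfrac{1}{2}]$, constitutes the bulk of the argument in \cite[Theorem~2.34]{rso.sphere}.
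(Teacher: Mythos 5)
The paper does not actually prove this statement here; it simply defers the argument to \cite[Theorem 2.34]{rso.sphere}, so there is no in-paper proof to compare against. Your proposal cites the same reference, which is fine, but the sketch you wrap around the citation contains a concrete error that would derail anyone trying to flesh it out.

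The problematic sentence is the claim that ``the classes $\mu_9^k$ together with $\eta$-periodic neighbours populate the even summands, the classes $\sigma\mu_9^k$ populate the odd summands starting at $n=3$.'' Counting degrees from Theorem~\ref{thm:gi-am}: $\lvert\mu_9^k\rvert=(9k,5k)$ has stem $4k$, and $\lvert\sigma\mu_9^k\rvert=(7+9k,4+5k)$ has stem $4k+3$, so the nonzero stems of $\pi_{\ast,\ast}\mathbf{1}[\tfrac{1}{\eta}]$ over $\mathbb{C}$ are exactly $0,3,4,7,8,11,12,\dotsc$. On the other hand the asserted splitting has a summand $\Sigma^{n,0}\mathsf{M}\mathbb{Z}/2$ for \emph{every} $n\neq 1$, including $n=2,5,6,9,10,\dotsc$, degrees in which $\pi_{\ast,\ast}\mathbf{1}[\tfrac{1}{\eta}]$ vanishes. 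These extra summands do not ``populate'' anything in the abutment: over $\mathbb{C}$ the homotopy groups of $\bigvee_{n\neq 1}\Sigma^{n,0}\mathsf{M}\mathbb{Z}/2$ (as a $\tau$-module) are much larger than the Andrews--Miller answer, so the slice spectral sequence must support a great many nonzero differentials. In other words, the bijection you are implicitly positing between Andrews--Miller monomials and wedge summands does not exist, and the ``stem $n=1$ is absent because $\sigma^2=0$'' heuristic is a red herring: $n=2$ is present even though stem $2$ is also empty. Determining that the zero slice has precisely one summand in each $n\neq 1$ (and none in $n=1$) requires running the slice spectral sequence with its differentials, which is exactly the step you flagged as ``the main obstacle'' but then preempted with the incorrect bookkeeping.

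A secondary issue is the base-change reduction. There is a morphism $\operatorname{Spec}(\mathbb{C})\to\operatorname{Spec}(\mathbb{Z}[\tfrac{1}{2}])$ but none the other way, so one cannot ``transport'' an equivalence established over $\mathbb{C}$ back to $\mathbb{Z}[\tfrac{1}{2}]$ by pullback alone. The actual structure must be to first build a comparison map over $\mathbb{Z}[\tfrac{1}{2}]$ (using the known slices of $\mathbf{1}$, which are pulled back from the base) and then use the complex computation together with a conservativity argument to check it is an equivalence. Your sketch reverses this order in a way that does not quite parse.
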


The remainder of this section
takes place in the category of 
$\mathbf{1}[\tfrac{1}{\eta}]$-modules
over the field of complex numbers. 
Abbreviate $\oplus_n \pi_{n+k,n}$ as $\pi_k$.

\begin{lemma}\label{lem:unit-mult4}
  The unit $\mathbf{1}[\tfrac{1}{\eta}]\to \mathsf{cKW}$ induces an isomorphism
  $\pi_{4k}\mathbf{1}[\tfrac{1}{\eta}]\to \pi_{4k}\mathsf{cKW}$ for every integer $k$.
\end{lemma}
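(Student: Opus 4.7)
My plan is to compute both sides explicitly using Theorem~\ref{thm:gi-am} and the structure of $\mathsf{cKW}$, and then verify that the unit matches generators. First I would compute $\pi_{n+4k,n}\mathbf{1}[\tfrac{1}{\eta}]$: since $\eta$ is invertible, the relation $\eta\sigma^2=0$ of Theorem~\ref{thm:gi-am} forces $\sigma^2=0$. A monomial $\eta^a\sigma^b\mu_9^c$ with $b\in\{0,1\}$, $c\geq 0$, $a\in\mathbb{Z}$ has bidegree with $s-t=3b+4c$, so the constraint $s-t=4k$ admits the unique solution $b=0$, $c=k$, $a=n-5k$ for $k\geq 0$ and none for $k<0$. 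Hence $\pi_{n+4k,n}\mathbf{1}[\tfrac{1}{\eta}]$ equals $\mathbb{F}_2\langle\eta^{n-5k}\mu_9^k\rangle$ for $k\geq 0$ and vanishes otherwise. Combining Lemma~\ref{lem:ckt-kt} (which identifies $\mathsf{KW}\simeq\mathsf{cKW}[\alpha^{-1}]$) with equation~(\ref{eq:kt-coeff}), the fact $W(\mathbb{C})=\mathbb{F}_2$, and 0-connectivity of $\mathsf{cKW}$ (restricting to $\alpha$-exponent $\geq 0$) yields $\pi_{n+4k,n}\mathsf{cKW}=\mathbb{F}_2\langle\eta^{n-4k}\alpha^k\rangle$ for $k\geq 0$ and zero for $k<0$.

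For $k<0$ both groups vanish, so the map is trivially an isomorphism; for $k=0$ the statement is Morel's theorem, as cited in the proof of Lemma~\ref{lem:real-points-hw}. For $k>0$, since the unit $\phi$ is a ring map with $\phi(\eta)=\eta$, one has $\phi(\eta^{n-5k}\mu_9^k)=\eta^{n-5k}\phi(\mu_9)^k$. Because $\pi_{9,5}\mathsf{cKW}\cong\mathbb{F}_2\langle\eta\alpha\rangle$, the whole claim reduces to showing $\phi(\mu_9)\neq 0$: if so, then $\phi(\mu_9)=\eta\alpha$, and $\phi$ sends the generator $\eta^{n-5k}\mu_9^k$ to the generator $\eta^{n-4k}\alpha^k$ of $\pi_{n+4k,n}\mathsf{cKW}$ for every $k\geq 1$.

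The main obstacle is proving $\phi(\mu_9)\neq 0$. My plan is to deduce this from the Andrews-Miller construction underlying Theorem~\ref{thm:gi-am}: in their $\mathsf{KW}$-based Adams spectral sequence, which converges over $\mathbb{C}$ by~\cite[Theorem 1]{hu-kriz-ormsby}, the class $\mu_9$ is detected in filtration zero by $\eta\alpha\in\pi_{9,5}\mathsf{KW}$, so $\mu_9$ has nonzero image under the composite $\mathbf{1}[\tfrac{1}{\eta}]\to\mathsf{cKW}\to\mathsf{KW}$. Since $\pi_{9,5}\mathsf{cKW}\to\pi_{9,5}\mathsf{KW}$ is an isomorphism of copies of $\mathbb{F}_2$ (by Lemma~\ref{lem:ckt-kt} and the explicit description of $\mathsf{KW}_{\ast,\ast}$), this forces $\phi(\mu_9)\neq 0$ already in $\mathsf{cKW}$, completing the argument.
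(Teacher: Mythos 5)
Your computation of both sides and the reduction to the single claim $\phi(\mu_9)\neq 0$ (via the ring map property of the unit) is correct and is a clean way to organize the argument. The gap is in the final step. You assert that $\mu_9$ is ``detected in filtration zero by $\eta\alpha$'' in a $\mathsf{KW}$-based Adams spectral sequence attributed to Andrews--Miller, with convergence from \cite[Theorem 1]{hu-kriz-ormsby}. Neither citation supports this: Hu--Kriz--Ormsby concerns the $H\mathbb{F}_2$-based motivic Adams spectral sequence, and Andrews--Miller compute $\pi_{\ast,\ast}\mathbf{1}[\tfrac{1}{\eta}]$ via the motivic Adams spectral sequence, not via a $\mathsf{KW}$-based resolution. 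More seriously, the claim is circular: for a $\mathsf{KW}$-based Adams spectral sequence, saying that a class is detected in filtration zero is, essentially by definition, the statement that its image under $\mathbf{1}[\tfrac{1}{\eta}]\to\mathsf{KW}$ is nonzero. So the assertion that $\mu_9$ has filtration zero \emph{is} the assertion that $\phi(\mu_9)\neq 0$; it does not prove it.

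To close the gap you need genuine input about the unit map. The paper supplies this via the slice machinery: Theorem~\ref{theorem:slices-eta-inv-sphere} pins down how the unit acts on the zero slice of $\mathbf{1}[\tfrac{1}{\eta}]$, and comparing the (degenerate) zeroth slice spectral sequences of $\mathbf{1}[\tfrac{1}{\eta}]$ and $\mathsf{KW}$ then shows that the unique nonzero class in $\pi_{4k,0}\mathbf{1}[\tfrac{1}{\eta}]$ hits $\alpha^k\eta^{-4k}$. Alternatively, the paper cites Hornbostel \cite[Theorem 3.2]{hornbostel.nil} for a slice-free proof. Either of those would replace your unsupported detection claim and make your argument go through.
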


\begin{proof}
  The
  unique nontrivial element $\alpha^{k}\eta^{-4k}\in \pi_{4k,0} \mathsf{KW}$
  comes from the unique nontrivial element in the 
  $4k$th column of the $E^2=\mathsf{E}^\infty$
  page of the zeroth slice spectral sequence of 
  $\mathsf{KW}$ \cite{roendigs-oestvaer.hermitian}.
  The description of the unit map $\mathbf{1}[\tfrac{1}{\eta}]\to \mathsf{KW}$ on
  slices, Theorem~\ref{theorem:slices-eta-inv-sphere}, 
  implies that this element is  the image of 
  the unique nontrivial element in the 
  $4k$th column of the $E^2=E^\infty$ page
  of the zeroth slice spectral sequence of $\mathbf{1}[\tfrac{1}{\eta}]$.
  Another proof, which does not rely on Voevodsky's slice filtration,
  is given in \cite[Theorem 3.2]{hornbostel.nil}.
\end{proof}

Set $\mathsf{D}_1=\mathbf{1}[\tfrac{1}{\eta}]$. Then the unit $\mathbf{1}[\tfrac{1}{\eta}]=\mathsf{D}_1\to \mathsf{cKW}$ is
a 3-connective map.\footnote{True over every field of characteristic not
two, as Theorem~\ref{thm:vanishing} shows.}
Set
$\mathsf{D}_2$ to be the homotopy cofiber of
\[ \sigma\eta^{-4}\colon \Sigma^{3,0}\mathbf{1}[\tfrac{1}{\eta}]\to\mathbf{1}[\tfrac{1}{\eta}] = \mathsf{D}_1 \]
Then the canonical map
$\mathbf{1}[\tfrac{1}{\eta}] =\mathsf{D}_1\to \mathsf{D}_2$ induces an isomorphism on $\pi_{4m}$,
as a consequence of the long exact sequence of homotopy groups
and Theorem~\ref{thm:gi-am}. For the same reason, the connecting map
$\mathsf{D}_2\to \Sigma^{4,0}\mathbf{1}[\tfrac{1}{\eta}]$
induces an isomorphism on $\pi_{4m+3}$ for $m\geq 1$,
giving in particular a unique nontrivial map
$\Sigma^{7,0}\mathbf{1}[\tfrac{1}{\eta}]\to \mathsf{D}_2$.
The unit
$\mathbf{1}[\tfrac{1}{\eta}]\to \mathsf{cKW}$ factors over $\mathsf{D}_2$ as a 7-connective map
$\mathsf{D}_2\to \mathsf{cKW}$ by Lemma~\ref{lem:unit-mult4}.  
This can be continued inductively, producing a sequence of 
cellular motivic spectra
factoring the unit of $\mathsf{cKW}$ as
\[ \mathbf{1}[\tfrac{1}{\eta}] =\mathsf{D}_1\to\mathsf{D}_2\to\dotsm  \to \mathsf{D}_n \to \dotsm \to \mathsf{cKW} \]
such that for every $n$ the map $\mathsf{D}_n\to \mathsf{cKW}$ 
is $4n-1$-connective and 
$ \mathbf{1}[\tfrac{1}{\eta}]\to \mathsf{D}_n \to \mathsf{cKW}$
induce isomorphisms on $\pi_{4m}$.
For every $n\geq 1$, there is a unique nontrivial element
$\Sigma^{4n-1,0}\mathbf{1}[\tfrac{1}{\eta}] \to \mathsf{D}_n$ in $\pi_{4n-1}\mathsf{D}_n\cong \pi_{4n-1}\mathbf{1}[\tfrac{1}{\eta}]$
such that 
\[ \Sigma^{4n-1,0}\mathbf{1}[\tfrac{1}{\eta}] \to \mathsf{D}_n \to 
\mathsf{D}_{n+1} \to \Sigma^{4n,0}\mathbf{1}[\tfrac{1}{\eta}] \]
is a homotopy cofiber sequence with 
$\mathsf{D}_{n+1}\to \Sigma^{4n,0}\mathbf{1}[\tfrac{1}{\eta}]$ 
inducing an isomorphism on $\pi_{4m+3}$ whenever $m\geq n$.
Taking the colimit with respect to $n\to \infty$ produces
the desired cell presentation of $\mathsf{cKW}$ by Lemma~\ref{lem:unit-mult4}.
A cell presentation for $\mathsf{KW}$ then follows from Lemma~\ref{lem:ckt-kt}.
Rationally this cell presentation splits by \cite[Corollary 2.8]{alp}, even over
any field of characteristic not two.

\section{An Adams resolution with connective Witt theory}
\label{sec:an-adams-resolution}

The section title refers to the cosimplicial diagram 
\[ [n] \mapsto \mathsf{cKW}^{\wedge n+1} \]
determined by the $\mathbf{1}[\tfrac{1}{\eta}]$-algebra $\mathsf{cKW}$.
The starting point of this resolution of $\mathbf{1}[\tfrac{1}{\eta}]$ is the following.

\begin{theorem}[Morel]
\label{thm:morel}
If $F$ is a field, $\pi_{n,n}\mathbf{1}[\tfrac{1}{\eta}]$
is isomorphic to the Witt ring of $F$.
\end{theorem}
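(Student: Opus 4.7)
The plan is to deduce this from Morel's identification of the zero line of motivic stable homotopy groups with Milnor-Witt $K$-theory, cited in the introduction as \cite[Theorem 6.2.1]{morel.pi0}. That theorem provides a canonical isomorphism of graded rings
\[ \bigoplus_{n\in \mathbb{Z}} \pi_{n,n}\mathbf{1} \xrightarrow{\cong} K^{\mathrm{MW}}_\ast(F), \]
under which the element $\eta\in \pi_{1,1}\mathbf{1}$ corresponds to the generator $\eta\in K^{\mathrm{MW}}_1(F)$ of degree $1$.

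First I would observe that localization at $\eta$ is exact and commutes with the functor $\pi_{\ast,\ast}$, since $\eta$ is a homogeneous element in the bigraded homotopy ring of $\mathbf{1}$ and $\mathbf{1}[\tfrac{1}{\eta}]$ is by definition the homotopy colimit of the sequence of $\eta$-multiplications. Hence Morel's theorem transports the computation to an algebraic one:
\[ \bigoplus_{n\in \mathbb{Z}} \pi_{n,n}\mathbf{1}[\tfrac{1}{\eta}] \cong K^{\mathrm{MW}}_\ast(F)[\eta^{-1}]. \]

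The second step is the purely algebraic identification $K^{\mathrm{MW}}_\ast(F)[\eta^{-1}]\cong W(F)[\eta,\eta^{-1}]$. This follows from the presentation of Milnor-Witt $K$-theory by the symbols $[u]$ for $u\in F^\times$ and $\eta$, together with the relation $\eta\cdot h = 0$ where $h=2+\eta[-1]$ is the image of the hyperbolic form: inverting $\eta$ forces $h=0$, and Morel's fundamental square then identifies $K^{\mathrm{MW}}_0(F)/h$ with the Witt ring $W(F)$, while $\eta$ becomes a unit carrying each degree $n$ isomorphically onto degree $0$. Extracting the $n$-th graded piece yields $\pi_{n,n}\mathbf{1}[\tfrac{1}{\eta}]\cong W(F)\cdot \eta^{-n}\cong W(F)$ for every integer $n$.

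There is essentially no obstacle, since all the work is contained in Morel's original theorem and in the standard computation of $K^{\mathrm{MW}}_\ast[\eta^{-1}]$; the only point requiring a brief justification is the exchange of $\eta$-localization with the formation of $\pi_{\ast,\ast}$, which is immediate from the fact that $\eta$ acts as a self-map of the bigraded abelian group.
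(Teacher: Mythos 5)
Your proof is correct and follows the same route the paper indicates: the paper gives no formal proof of this theorem, but in the introduction it explains that the statement is a direct consequence of Morel's identification $\bigoplus_n \pi_{n,n}\mathbf{1} \cong K^{\mathrm{MW}}_\ast(F)$, exactly the reduction you carry out. Two minor remarks: (i) in Morel's convention $\eta$ sits in degree $-1$ of $K^{\mathrm{MW}}_\ast$, with $\pi_{n,n}\mathbf{1}\cong K^{\mathrm{MW}}_{-n}(F)$, so your phrase ``generator $\eta\in K^{\mathrm{MW}}_1(F)$ of degree $1$'' is an indexing slip, though it does not affect the argument; (ii) the identification $K^{\mathrm{MW}}_0(F)/h\cong GW(F)/\mathbb{Z}h\cong W(F)$ is the definition of the Witt ring and needs no appeal to Morel's fundamental square, so that citation is superfluous.
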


It translates to the following statement.

\begin{corollary}
  \label{cor:morel}
  Let $F$ be a field of characteristic not two.
  The unit $cu\colon \mathbf{1}[\tfrac{1}{\eta}] \to \mathsf{cKW}$
  is 1-connective. 
\end{corollary}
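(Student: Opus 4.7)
The plan is to unpack 1-connectivity and verify it separately on the zero line and the first line. By the definition of the homotopy $t$-structure recalled at the end of Section~\ref{sec:cell-presentations}, the map $cu$ is 1-connective if and only if it induces isomorphisms on the sheaves $\underline{\pi}_{n,n}$ for every integer $n$ and surjections on $\underline{\pi}_{n+1,n}$ for every integer $n$. I would therefore treat these two lines separately.

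First I compute the target. Since $\mathsf{cKW}=\mathsf{KW}_{\geq 0}$, the canonical map $\mathsf{cKW}\to\mathsf{KW}$ induces isomorphisms on $\underline{\pi}_{s,t}$ whenever $s-t\geq 0$. Combining this with the coefficient computation~(\ref{eq:kt-coeff}) and its sheaf analogue $\underline{\pi}_{*,*}\mathsf{KW}\cong\underline{W}[\eta^{\pm 1},\alpha^{\pm 1}]$ with $|\eta|=(1,1)$ and $|\alpha|=(8,4)$, a bidegree count suffices: for the bidegree $(n,n)$ the equations $a+8b=n$ and $a+4b=n$ force $b=0$ and $a=n$, giving $\underline{\pi}_{n,n}\mathsf{cKW}\cong\underline{W}$ generated by $\eta^n$; for the bidegree $(n+1,n)$ the equations force $4b=1$, which has no integer solution, so $\underline{\pi}_{n+1,n}\mathsf{cKW}=0$.

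Because the first-line target vanishes, the surjectivity requirement is automatic. For the isomorphism on the zero line, I invoke Morel's Theorem~\ref{thm:morel} in its sheaf-theoretic form, which identifies $\underline{\pi}_{n,n}\mathbf{1}[\tfrac{1}{\eta}]\cong\underline{W}$. The unit $\mathbf{1}\to\mathsf{KW}$ is multiplicative and sends the algebraic Hopf map $\eta$ to its image $\eta\in\mathsf{KW}_{1,1}$, so on $\underline{\pi}_{n,n}$ it realizes, after composition with the Morel identification and with the target computation above, the canonical identity on $\underline{W}$. Since $cu$ factors the unit through the $0$-connective cover and the cover map $\mathsf{cKW}\to\mathsf{KW}$ is already an isomorphism in this range, the induced map $cu$ is itself an isomorphism on $\underline{\pi}_{n,n}$.

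The one genuinely nontrivial input is the sheaf-theoretic refinement of Morel's identification; granted that, the remainder of the argument is bidegree bookkeeping plus the defining property of the $0$-connective cover, so no further obstacles arise.
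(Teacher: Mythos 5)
Your proof is correct and takes essentially the same route as the paper, which records no separate argument but states that Corollary~\ref{cor:morel} is a translation of Morel's Theorem~\ref{thm:morel} and then spells out exactly your unpacking: $\underline{\pi}_{n,n}cu$ is an isomorphism and $\underline{\pi}_{n+1,n}cu$ is surjective. Your bidegree bookkeeping for $\underline{\pi}_{n,n}\mathsf{cKW}$ and $\underline{\pi}_{n+1,n}\mathsf{cKW}$ (the latter vanishing because higher Witt groups are concentrated in degrees divisible by $4$) and your explicit flag that the sheaf-theoretic form of Morel's identification is the one genuine input are precisely the ingredients the paper leaves implicit.
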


In other words,  $\underline{\pi}_{n,n}cu$ is an isomorphism
and $\underline{\pi}_{n+1,n}cu$ is surjective for every integer $n$.
Let
\begin{equation}\label{eq:fib-unit-ckt}
  \mathsf{C} \to \mathbf{1}[\tfrac{1}{\eta}] \xrightarrow{cu} \mathsf{cKW}
\end{equation}
be the fiber of $cu$. Corollary~\ref{cor:morel} implies
that it is 1-connective.

\begin{lemma}\label{lem:cto1}
  The canonical
  map 
  \[  \underline{\pi}_{p,q}\mathsf{C}\to \underline{\pi}_{p,q} \mathbf{1}[\tfrac{1}{\eta}] \]
  is an isomorphism if 
  $p-q\equiv 1,2(4)$ and surjective if $p-q\equiv 3(4)$. 
\end{lemma}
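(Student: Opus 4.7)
The plan is to read the claim directly off the long exact sequence of homotopy sheaves
\[ \dotsm \to \underline{\pi}_{p+1,q}\mathsf{cKW} \to \underline{\pi}_{p,q}\mathsf{C} \to \underline{\pi}_{p,q}\mathbf{1}[\tfrac{1}{\eta}] \to \underline{\pi}_{p,q}\mathsf{cKW} \to \dotsm \]
induced by the fiber sequence~(\ref{eq:fib-unit-ckt}). The entire content will then be packaged into a vanishing statement for the homotopy sheaves of $\mathsf{cKW}$ in the relevant bidegrees.

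To obtain that vanishing I would first combine Lemma~\ref{lem:ckt-kt} with the coefficient formula~(\ref{eq:kt-coeff}). The canonical map $\mathsf{cKW} \to \mathsf{cKW}[\alpha^{-1}] \simeq \mathsf{KW}$ is an isomorphism on $\underline{\pi}_{p,q}$ whenever $p - q \geq 0$, while $\underline{\pi}_{p,q}\mathsf{cKW} = 0$ for $p - q < 0$ by construction of the $0$-connective cover with respect to the homotopy $t$-structure. On $\mathsf{KW}$ the Laurent monomial $\eta^a\alpha^b$ lives in bidegree $(a + 8b, a + 4b)$, so the difference is $p - q = 4b$; hence $\underline{\pi}_{p,q}\mathsf{KW}$ can only be nonzero when $p - q \equiv 0 \pmod 4$. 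The passage from~(\ref{eq:kt-coeff}) to the corresponding sheaf-level vanishing is the one point that needs a moment of care: it follows by applying~(\ref{eq:kt-coeff}) at every finitely generated field extension of $F$, using the identification of the zero line of $\mathsf{KW}$ with the sheaf of unramified Witt groups together with $\eta$- and $\alpha$-periodicity. The upshot is
\[ \underline{\pi}_{p,q}\mathsf{cKW} = 0 \quad \text{whenever} \quad p - q \not\equiv 0 \pmod 4. \]

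With this in hand, the rest is bookkeeping along the three residue classes. If $p - q \equiv 1 \pmod 4$, then $p+1-q \equiv 2$ and $p-q\equiv 1$ are both nonzero mod $4$, so both the incoming and outgoing $\mathsf{cKW}$ terms vanish and the comparison map is an isomorphism. If $p - q \equiv 2 \pmod 4$, the argument is the same, now with $p+1-q \equiv 3$. Finally, if $p - q \equiv 3 \pmod 4$, the outgoing term $\underline{\pi}_{p,q}\mathsf{cKW}$ still vanishes, which gives surjectivity, but the incoming term $\underline{\pi}_{p+1,q}\mathsf{cKW}$ with $p+1-q \equiv 0 \pmod 4$ can be nonzero, and injectivity is correctly not claimed in the statement. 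There is no genuine obstacle beyond the sheafification remark above.
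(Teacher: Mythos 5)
Your proof is correct and takes essentially the same route as the paper: read the claim off the long exact sequence in homotopy sheaves induced by the fiber sequence~(\ref{eq:fib-unit-ckt}), reducing everything to the vanishing $\underline{\pi}_{p,q}\mathsf{cKW}=0$ for $p-q\not\equiv 0\pmod 4$. The paper obtains this vanishing by citing the 4-periodicity of higher Witt groups directly, while you re-derive it from the connective-cover description together with Lemma~\ref{lem:ckt-kt} and the coefficient formula~(\ref{eq:kt-coeff}); these are the same fact, just unwound slightly differently.
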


\begin{proof}
  The cofiber sequence~(\ref{eq:fib-unit-ckt}) induces a 
  long exact sequence of sheaves of homotopy groups. The result
  then follows from
  vanishing $\underline{\pi}_{p,q}\mathsf{cKW} = 0$ for $p-q$ not 
  divisible by 4, which in turn follows from the fact that
  higher Witt groups of a field are concentrated in degrees congruent
  to 0 modulo 4.
\end{proof}

Smashing the cofiber sequence~(\ref{eq:fib-unit-ckt}) with $\mathsf{cKW}$
produces the following cofiber sequence:
\begin{equation}\label{eq:fib-unit-ckt-smash-ckt}
  \mathsf{C} \wedge \mathsf{cKW} \to \mathbf{1}[\tfrac{1}{\eta}]\wedge \mathsf{cKW}= \mathsf{cKW} \xrightarrow{cu\wedge \mathsf{cKW}} \mathsf{cKW} \wedge \mathsf{cKW}
\end{equation}

\begin{lemma}\label{lem:cooptoc}
  The connecting map
  \[ \underline{\pi}_{p+1,q}\mathsf{cKW}\wedge \mathsf{cKW} \to \underline{\pi}_{p,q}\mathsf{C}\wedge \mathsf{cKW} \]
  is an isomorphism for $p-q \equiv 1,2(4)$, 
  and surjective for $p-q\equiv 0,3(4)$.
\end{lemma}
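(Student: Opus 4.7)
The plan is to feed the cofiber sequence~(\ref{eq:fib-unit-ckt-smash-ckt}) into the long exact sequence of Nisnevich sheaves of motivic homotopy groups and exploit the ring structure on $\mathsf{cKW}$. Since $\mathsf{cKW}$ is a commutative motivic ring spectrum by Theorem~\ref{thm:ckt-base}, the composite of unit and multiplication $\mu\circ(cu\wedge\mathsf{cKW})\colon\mathsf{cKW}\to\mathsf{cKW}\wedge\mathsf{cKW}\to\mathsf{cKW}$ is the identity. Hence $cu\wedge\mathsf{cKW}$ is a split monomorphism in $\mathbf{SH}(F)$, and the induced map $(cu\wedge\mathsf{cKW})_\ast\colon\underline{\pi}_{\ast,\ast}\mathsf{cKW}\to\underline{\pi}_{\ast,\ast}\mathsf{cKW}\wedge\mathsf{cKW}$ is split injective in every bidegree.

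The five-term fragment of the long exact sequence around $\partial$ involves this split-injective map in two places. On the right, injectivity of $\underline{\pi}_{p,q}\mathsf{cKW}\to\underline{\pi}_{p,q}\mathsf{cKW}\wedge\mathsf{cKW}$ forces the preceding map $\underline{\pi}_{p,q}\mathsf{C}\wedge\mathsf{cKW}\to\underline{\pi}_{p,q}\mathsf{cKW}$ to vanish, so $\partial$ is surjective in every bidegree; this disposes of the surjectivity claims in all four residue classes. On the left, exactness identifies $\ker(\partial)$ with the image of $\underline{\pi}_{p+1,q}\mathsf{cKW}\to\underline{\pi}_{p+1,q}\mathsf{cKW}\wedge\mathsf{cKW}$; since that map is split injective, its image vanishes precisely when $\underline{\pi}_{p+1,q}\mathsf{cKW}=0$. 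The task thus reduces to proving this vanishing when $p-q\equiv 1,2\pmod 4$.

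For the vanishing I would invoke Lemma~\ref{lem:ckt-kt} and the coefficient formula~(\ref{eq:kt-coeff}): since $\mathsf{cKW}[\alpha^{-1}]\simeq\mathsf{KW}$ and the Nisnevich sheaves $\underline{\pi}_{s,t}\mathsf{KW}$ are concentrated in bidegrees with $s-t\equiv 0\pmod 4$ (higher Witt groups of every residue field of a smooth $F$-scheme vanish outside these degrees), the same concentration holds for $\mathsf{cKW}$. For $p-q\equiv 1\pmod 4$ one has $(p+1)-q\equiv 2\pmod 4$, and for $p-q\equiv 2\pmod 4$ one has $(p+1)-q\equiv 3\pmod 4$; in both cases $(p+1)-q\not\equiv 0\pmod 4$, so $\underline{\pi}_{p+1,q}\mathsf{cKW}=0$ and $\partial$ is an isomorphism. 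I do not anticipate a substantial obstacle: the argument is essentially a diagram chase whose only non-formal input is the mod-4 concentration of Witt-theoretic homotopy sheaves, already exploited in the proof of Lemma~\ref{lem:cto1}.
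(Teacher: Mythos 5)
Your proposal is correct and uses essentially the same ingredients as the paper: the long exact sequence of homotopy sheaves induced by the cofiber sequence~(\ref{eq:fib-unit-ckt-smash-ckt}), the concentration of $\underline{\pi}_{s,t}\mathsf{cKW}$ in degrees $s-t\equiv 0\pmod 4$, and the fact that multiplication is a retraction of $cu\wedge\mathsf{cKW}$. The only (cosmetic, arguably cleaner) difference is organizational: you use the split-injectivity of $(cu\wedge\mathsf{cKW})_\ast$ to get surjectivity of the connecting map in all residue classes and invoke the mod-$4$ vanishing only for injectivity, whereas the paper deduces the statement for $p-q\equiv 1,2,3\pmod 4$ from the vanishing alone and reserves the retraction argument for $p-q\equiv 0\pmod 4$.
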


\begin{proof}
  The
  cofiber sequence~(\ref{eq:fib-unit-ckt-smash-ckt}) induces
  a long exact sequence of homotopy groups. As in the proof of 
  Lemma~\ref{lem:cto1}, the vanishing 
  $\underline{\pi}_{p,q}\mathsf{cKW} = 0$ for $p-q$ not divisible
  by 4 implies the statement for $p-q \equiv 1,2,3(4)$. Since
  $cu \wedge \mathsf{cKW}$ has the multiplication as a retraction,
  surjectivity also holds for $p-q\equiv 0(4)$.
\end{proof}

An explicit consequence of Lemma~\ref{lem:cooptoc} is that
$\underline{\pi}_{n+1,n}\mathsf{cKW} \wedge \mathsf{cKW} =0$ for all 
integers $n$.
In fact, $\mathsf{C}$ is 1-connective by Corollary~\ref{cor:morel}, 
whence
$\underline{\pi}_{n,n}\mathsf{C}\wedge \mathsf{cKW} =0$.

\begin{proposition}\label{prop:coop-1}
  For every integer $n$, there is an isomorphism
  \[ \underline{\pi}_{n+2,n}\mathsf{cKW}\wedge \mathsf{cKW}  \cong 
  \underline{\pi}_{n+1,n} \mathbf{1}[\tfrac{1}{\eta}]\]
  of sheaves of homotopy groups. 
\end{proposition}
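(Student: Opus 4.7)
The plan is to chain together three isomorphisms involving the fiber $\mathsf{C}$ of the unit $cu$, using the two preceding lemmas together with one connectivity estimate for $\mathsf{C}\wedge\mathsf{C}$. First, I would apply Lemma~\ref{lem:cooptoc} at the bidegree $(p,q)=(n+1,n)$: because $p-q=1\equiv 1\pmod 4$, the connecting homomorphism of the cofiber sequence~(\ref{eq:fib-unit-ckt-smash-ckt}) supplies an isomorphism
\[ \underline{\pi}_{n+2,n}\mathsf{cKW}\wedge\mathsf{cKW}\;\cong\;\underline{\pi}_{n+1,n}\mathsf{C}\wedge\mathsf{cKW}. \]
The task therefore reduces to identifying $\underline{\pi}_{n+1,n}\mathsf{C}\wedge\mathsf{cKW}$ with $\underline{\pi}_{n+1,n}\mathbf{1}[\tfrac{1}{\eta}]$.

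Second, I would smash the fiber sequence~(\ref{eq:fib-unit-ckt}) with $\mathsf{C}$ to produce
\[ \mathsf{C}\wedge\mathsf{C}\;\to\;\mathsf{C}\;\to\;\mathsf{C}\wedge\mathsf{cKW}. \]
By Corollary~\ref{cor:morel}, $\mathsf{C}\in\mathbf{SH}(F)_{\geq 1}$, and since the homotopy t-structure respects smash products at the level of its generators, $\mathsf{C}\wedge\mathsf{C}\in\mathbf{SH}(F)_{\geq 2}$. Hence $\underline{\pi}_{p,q}\mathsf{C}\wedge\mathsf{C}=0$ whenever $p-q\leq 1$; in particular the groups at $(n+1,n)$ and at $(n,n)$ vanish. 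The associated long exact sequence of homotopy sheaves therefore forces $\underline{\pi}_{n+1,n}\mathsf{C}\to\underline{\pi}_{n+1,n}\mathsf{C}\wedge\mathsf{cKW}$ to be an isomorphism.

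Third, Lemma~\ref{lem:cto1} at $(p,q)=(n+1,n)$, with $p-q=1\equiv 1\pmod 4$, identifies $\underline{\pi}_{n+1,n}\mathsf{C}\cong\underline{\pi}_{n+1,n}\mathbf{1}[\tfrac{1}{\eta}]$. Composing the three isomorphisms yields the claim.

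The only nontrivial step is the compatibility of the homotopy t-structure with smash products that enters Step~2, namely that smashing two $(\geq 1)$-connective motivic spectra yields a $(\geq 2)$-connective one. This follows from the generators $\Sigma^{s,t}\Sigma^\infty X_+$ with $s-t\geq 1$ recalled in Section~\ref{sec:cell-presentations}, since the smash product of two such lands on $\Sigma^{s+s',t+t'}\Sigma^\infty (X\times_F X')_+$ with $(s+s')-(t+t')\geq 2$, and both $\mathbf{SH}(F)_{\geq 1}$ and $\mathbf{SH}(F)_{\geq 2}$ are closed under homotopy colimits and extensions. With this standard fact in hand, the argument is a purely formal diagram chase through the two cofiber sequences derived from~(\ref{eq:fib-unit-ckt}), parallel in style to the proofs of Lemmas~\ref{lem:cto1} and~\ref{lem:cooptoc}.
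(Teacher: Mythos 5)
Your proof is correct and is essentially identical to the paper's argument: you use the connecting map from Lemma~\ref{lem:cooptoc} in the case $p-q\equiv 1\pmod 4$, the two-connectivity of $\mathsf{C}\wedge\mathsf{C}$ to identify $\underline{\pi}_{n+1,n}\mathsf{C}$ with $\underline{\pi}_{n+1,n}\mathsf{C}\wedge\mathsf{cKW}$, and Lemma~\ref{lem:cto1} to pass to $\underline{\pi}_{n+1,n}\mathbf{1}[\tfrac{1}{\eta}]$, exactly as the paper does. The only addition is your explicit justification of the fact that smashing two $1$-connective spectra yields a $2$-connective one via the generators of $\mathbf{SH}(F)_{\geq n}$, which the paper simply asserts.
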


\begin{proof}
  Smashing the cofiber sequence~(\ref{eq:fib-unit-ckt}) with $\mathsf{C}$
  produces the following cofiber sequence:
  \[  \mathsf{C} \wedge \mathsf{C} \to \mathbf{1}[\tfrac{1}{\eta}]\wedge \mathsf{C}= \mathsf{C} \xrightarrow{cu\wedge \mathsf{C}} \mathsf{cKW} \wedge \mathsf{C}
  \]
  Since $\mathsf{C}$ is $1$-connective by Morel's Theorem~\ref{cor:morel},
  $\mathsf{C}\wedge \mathsf{C}$ is 2-connective, which implies
  that $\underline{\pi}_{n+1,n}\mathsf{C}\to \underline{\pi}_{n+1,n}\mathsf{cKW}\wedge \mathsf{C}$ is an isomorphism. Lemma~\ref{lem:cooptoc} gives that
  the connecting map
  $\underline{\pi}_{n+2,n}\mathsf{cKW}\wedge \mathsf{cKW} \to \underline{\pi}_{n+1,n}\mathsf{cKW}\wedge \mathsf{C}$ is an isomorphism.
  The map $\underline{\pi}_{n+1,n}\mathsf{C}\to \underline{\pi}_{n+1,n}\mathbf{1}$ is an isomorphism by Lemma~\ref{lem:cto1}. The appropriate
  composition provides the desired isomorphism.
\end{proof}

\begin{proposition}\label{prop:1implies2}
  If $\underline{\pi}_{n+1,n} \mathbf{1}[\tfrac{1}{\eta}]=0$,
  then there is an isomorphism
  \[ \underline{\pi}_{n+3,n}\mathsf{cKW}\wedge \mathsf{cKW}  \cong 
  \underline{\pi}_{n+2,n} \mathbf{1}[\tfrac{1}{\eta}]\]
  of sheaves of homotopy groups for every integer $n$.
\end{proposition}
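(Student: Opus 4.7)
The plan is to imitate the proof of Proposition \ref{prop:coop-1}, using the hypothesis to push the connectivity of the fiber $\mathsf{C}$ of $cu$ up by one, which then propagates into higher connectivity of $\mathsf{C}\wedge\mathsf{C}$ and allows reading off the desired isomorphism on the slope-2 line instead of the slope-1 line.

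First I would boost connectivity. By Lemma \ref{lem:cto1} the canonical map $\underline{\pi}_{n+1,n}\mathsf{C}\to \underline{\pi}_{n+1,n}\mathbf{1}[\tfrac{1}{\eta}]$ is an isomorphism, since $(n+1)-n\equiv 1\pmod{4}$. The hypothesis, applied for every $n$, therefore gives $\underline{\pi}_{n+1,n}\mathsf{C}=0$, which combined with the 1-connectivity of $\mathsf{C}$ from Corollary \ref{cor:morel} places $\mathsf{C}$ in $\mathbf{SH}(S)_{\geq 2}$. Consequently $\mathsf{C}\wedge\mathsf{C}$ lies in $\mathbf{SH}(S)_{\geq 4}$, so both $\underline{\pi}_{n+2,n}\mathsf{C}\wedge\mathsf{C}$ and $\underline{\pi}_{n+1,n}\mathsf{C}\wedge\mathsf{C}$ vanish.

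Second, smashing the defining cofiber sequence (\ref{eq:fib-unit-ckt}) with $\mathsf{C}$ produces
\[ \mathsf{C}\wedge \mathsf{C} \to \mathsf{C} \xrightarrow{cu\wedge \mathsf{C}} \mathsf{cKW}\wedge \mathsf{C}, \]
and the corresponding long exact sequence of homotopy sheaves, combined with the two vanishings just noted, shows that $\underline{\pi}_{n+2,n}\mathsf{C}\to \underline{\pi}_{n+2,n}\mathsf{cKW}\wedge\mathsf{C}$ is an isomorphism. By commutativity of the smash product this also gives $\underline{\pi}_{n+2,n}\mathsf{C}\wedge\mathsf{cKW}\cong \underline{\pi}_{n+2,n}\mathsf{C}$.

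The conclusion then follows by composing three isomorphisms. Lemma \ref{lem:cooptoc} applied with $p-q=2$ gives the connecting isomorphism $\underline{\pi}_{n+3,n}\mathsf{cKW}\wedge \mathsf{cKW}\cong \underline{\pi}_{n+2,n}\mathsf{C}\wedge\mathsf{cKW}$; the second paragraph identifies this target with $\underline{\pi}_{n+2,n}\mathsf{C}$; and Lemma \ref{lem:cto1} applied with $p-q=2$ identifies that in turn with $\underline{\pi}_{n+2,n}\mathbf{1}[\tfrac{1}{\eta}]$. The main subtlety is the connectivity bootstrap in the first paragraph, since it requires the hypothesis to hold simultaneously in all degrees $n$ to conclude that $\mathsf{C}$ really belongs to $\mathbf{SH}(S)_{\geq 2}$; once that is granted, the rest is a direct one-degree shift of the argument for Proposition \ref{prop:coop-1}.
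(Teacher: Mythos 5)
Your proof is correct and follows essentially the same route as the paper's: boost $\mathsf{C}$ to $2$-connectivity from the hypothesis (which the paper states as an equivalence without elaboration), conclude $\mathsf{C}\wedge\mathsf{C}$ is $4$-connective, and then compose the connecting isomorphism from Lemma~\ref{lem:cooptoc}, the isomorphism $\underline{\pi}_{n+2,n}\mathsf{C}\cong\underline{\pi}_{n+2,n}\mathsf{cKW}\wedge\mathsf{C}$, and the isomorphism from Lemma~\ref{lem:cto1}. The one subtlety you flag --- that the hypothesis must hold for all $n$ to get genuine $2$-connectivity of $\mathsf{C}$ --- is in fact automatic here by $\eta$-periodicity of $\mathsf{C}$, so the statement for a single $n$ already implies it for all.
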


\begin{proof}
  Assume that $\underline{\pi}_{n+1,n} \mathbf{1}[\tfrac{1}{\eta}]=0$, or,
  equivalently, 
  that $\mathsf{C}$ is 2-connective. Then
  $\mathsf{C}\wedge \mathsf{C}$ is 4-connective, which implies that 
  $\underline{\pi}_{n+k,n}\mathsf{C}\to \underline{\pi}_{n+k,n}\mathsf{cKW}\wedge \mathsf{C}$ is an isomorphism for all integers $n$ and all integers
  $k\leq 3$, and in particular for $k=2$. 
  Lemma~\ref{lem:cooptoc} implies that the connecting map
  $\underline{\pi}_{n+3,n}\mathsf{cKW}\wedge \mathsf{cKW} \to \underline{\pi}_{n+2,n}\mathsf{cKW}\wedge \mathsf{C}$ is an isomorphism. The canonical
  map $\underline{\pi}_{n+2,n}\mathsf{C}\to \underline{\pi}_{n+2,n}\mathbf{1}$ is an isomorphism by Lemma~\ref{lem:cto1}. The appropriate
  composition provides the desired isomorphism.
\end{proof}

Propositions~\ref{prop:coop-1} and~\ref{prop:1implies2} are
direct manifestations of the applicability of cooperations in connective
Witt theory to computations of motivic stable homotopy groups
of the $\eta$-inverted sphere. The following structural result  
has consequences for $\mathsf{cKW}\wedge \mathsf{cKW}$.
In order to state it, recall the real \'etale topology
on $\mathbf{Sm}_S$ which has as stalks henselian local rings
with real closed residue fields. In particular, it is finer than
the Nisnevich topology. The identity functor on motivic (symmetric) spectra
over $S$ thus can be regarded
as a left Quillen functor from the Nisnevich to the real
\'etale homotopy theory, and similarly for the 
respective $\mathbf{A}^1$-localizations. The real \'etale topology
is relevant because of \cite[Theorem 6.6]{ksw}, which implies
that a Nisnevich fibrant model for
$\mathsf{cKW}[\tfrac{1}{2}]$ is already real \'etale fibrant.
Besides being crucial input for a proof of Theorem~\ref{thm:real-points-kt}
above, this gives the base case for the following statement
(which could be formulated in greater generality).

\begin{proposition}\label{prop:ckw-modules-real-etale}
  Let $F$ be a field of characteristic zero.
  A Nisnevich fibrant $\mathsf{cKW}[\tfrac{1}{2}]$-module
  over $F$ is already real \'etale fibrant.
\end{proposition}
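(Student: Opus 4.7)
The plan is to reduce, via a cellular generation argument over the ring spectrum $\mathsf{cKW}[\tfrac{1}{2}]$, to the base case $M=\mathsf{cKW}[\tfrac{1}{2}]$ itself, which is exactly what \cite[Theorem 6.6]{ksw} provides and is the input already used to prove Theorem~\ref{thm:real-points-kt}. First I would rephrase the desired conclusion: a Nisnevich fibrant motivic spectrum $M$ is real \'etale fibrant precisely when the canonical comparison $M \to L_{\mathrm{ret}} M$ into its real \'etale fibrant replacement induces an isomorphism on every sheaf $\underline{\pi}_{s,t}$ of motivic stable homotopy groups. Let $\mathcal{C}$ denote the class of $\mathsf{cKW}[\tfrac{1}{2}]$-modules over $F$ satisfying this condition.

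Next I would verify that $\mathcal{C}$ forms a localizing triangulated subcategory of the homotopy category of $\mathsf{cKW}[\tfrac{1}{2}]$-modules. Stability under shifts and cofiber sequences is immediate from exactness of both the Nisnevich and real \'etale $\mathbf{A}^1$-localizations. Closure under arbitrary coproducts and filtered homotopy colimits requires that $L_{\mathrm{ret}}$ commute with the corresponding colimits on sheaves of motivic stable homotopy groups; over a field of characteristic zero this is controlled by the finite Krull dimension of the real spectrum, which bounds the real \'etale cohomological dimension of smooth $F$-schemes. Combined with the base case, this already yields $\Sigma^{s,t}\mathsf{cKW}[\tfrac{1}{2}] \in \mathcal{C}$ for every bidegree $(s,t)$.

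Finally, since the homotopy category of $\mathsf{cKW}[\tfrac{1}{2}]$-modules is compactly generated by the shifts $\Sigma^{s,t}\mathsf{cKW}[\tfrac{1}{2}]$ of the ring spectrum itself, every such module admits a cell presentation built from these generators by iterated cofibers and coproducts, in the sense of Lemma~\ref{lem:cellular-presentation}. A localizing triangulated subcategory containing a set of compact generators is the whole category, so $\mathcal{C}$ exhausts the module category, giving the claim. The main obstacle I anticipate is verifying the closure of $\mathcal{C}$ under arbitrary homotopy colimits: this is precisely where the characteristic-zero hypothesis is needed, and it requires a careful bound on the real \'etale cohomological dimension so that $L_{\mathrm{ret}}$ commutes with filtered colimits of the relevant presheaves of homotopy groups. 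Once this finiteness is in place, the cellular reduction runs formally.
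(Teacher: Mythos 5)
Your overall strategy -- reduce via a cell decomposition of the module category to the free module case and invoke \cite[Theorem 6.6]{ksw} there -- is the same as the paper's. But there is a genuine gap in how you identify the generators. You assert that the homotopy category of $\mathsf{cKW}[\tfrac{1}{2}]$-modules is compactly generated by the shifts $\Sigma^{s,t}\mathsf{cKW}[\tfrac{1}{2}]$ of the ring spectrum itself, so that the base case reduces to the single object $\mathsf{cKW}[\tfrac{1}{2}]$. That claim is false. The motivic stable homotopy category $\mathbf{SH}(F)$ is \emph{not} compactly generated by the spheres $S^{s,t}$ alone: one needs the shifted suspension spectra $\Sigma^{s,t}\Sigma^\infty X_+$ of \emph{all} smooth $F$-schemes $X$. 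The localizing subcategory generated by the $S^{s,t}$ is precisely the cellular subcategory $\mathbf{SH}(F)^{\mathrm{cell}}$ of Lemma~\ref{lem:cellular-presentation}, and this is a proper subcategory. Correspondingly, the module category over $\mathsf{cKW}[\tfrac{1}{2}]$ is generated by the free modules $\mathsf{cKW}[\tfrac{1}{2}]\wedge \Sigma^{s,t}\Sigma^\infty X_+$ for $X$ ranging over $\mathbf{Sm}_F$, not merely the shifts of the unit. Your localizing subcategory $\mathcal{C}$ therefore needs to contain all of these, and your argument never establishes that.

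This is exactly the nontrivial part of the paper's proof, which you are missing: with $\mathsf{G}$ a shifted suspension spectrum of a smooth $F$-scheme, the free module $\mathsf{cKW}[\tfrac{1}{2}]\wedge\mathsf{G}$ is not directly covered by \cite[Theorem 6.6]{ksw}. The paper handles it by observing that $\mathsf{G}$ is strongly dualizable, so a Nisnevich fibrant replacement of $\mathsf{cKW}[\tfrac{1}{2}]\wedge\mathsf{G}$ can be computed as the internal mapping spectrum $\underline{\mathrm{Hom}}(\mathsf{G}^\vee, \mathsf{cKW}[\tfrac{1}{2}])$ out of the dual into a Nisnevich (hence real \'etale) fibrant model of $\mathsf{cKW}[\tfrac{1}{2}]$, and real \'etale fibrancy is preserved under taking internal homs out of a cofibrant source. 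You would need to supply this step to make the reduction run. Your remarks on closure of $\mathcal{C}$ under coproducts and the role of finite real \'etale cohomological dimension are sensible and address the filtered-colimit closure that the paper leaves implicit in its "standard argument," but they do not substitute for identifying and handling the correct set of generators.
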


\begin{proof}
  The standard model structure on modules
  over a ring object has underlying weak equivalences
  and fibrations. The corresponding model structure in
  the case at hand is
  cofibrantly generated. The cofibers of these
  generating cofibrations are 
  free modules 
  $\mathsf{cKW}[\tfrac{1}{2}] \wedge \mathsf{G}$,
  where $\mathsf{G}$ is a shifted motivic suspension spectrum
  of a smooth $F$-scheme. A standard argument shows that it suffices
  to prove that a Nisnevich fibrant replacement of 
  $\mathsf{cKW}[\tfrac{1}{2}] \wedge \mathsf{G}$ is already
  real \'etale fibrant. Since the motivic symmetric spectra
  $\mathsf{G}$ are strongly dualizable, a Nisnevich fibrant
  replacement is given by the internal motivic spectrum of
  maps from (a cofibrant model of) the dual of $\mathsf{G}$
  to a Nisnevich fibrant model of
  $\mathsf{cKW}[\tfrac{1}{2}]$. However, since the latter
  is already real \'etale fibrant, so is the internal
  motivic spectrum of maps.
\end{proof}

Proposition~\ref{prop:ckw-modules-real-etale}
assumes that 2 is invertible and that the base field
has characteristic zero. Fields of positive characteristic
do not have interesting Witt theory after inverting 2,
as the following short section discusses for the sake
of completeness.
 
\section{Fields of odd characteristic}
\label{sec:fields-odd-char}

\begin{theorem}\label{thm:eta-sphere-odd-char}
  Let $F$ be a field of odd characteristic.
  Then $\mathbf{1}[\tfrac{1}{\eta},\tfrac{1}{2}]$ is contractible.
\end{theorem}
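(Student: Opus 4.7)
The plan is to reduce the statement to Morel's identification of the zero line of $\pi_{\ast,\ast}\mathbf{1}[\tfrac{1}{\eta}]$ together with the classical 2-primary structure of the Witt ring of a non-formally-real field. The starting observation is that any field $F$ of positive odd characteristic fails to be formally real: the prime subfield $\mathbb{F}_p \subset F$ already represents $-1$ as a sum of squares (a pigeonhole on the $(p+1)/2$ squares modulo $p$ produces an identity of the form $a^2+b^2+1=0$), and this representation persists in $F$.

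With $F$ non-formally-real, a classical companion to Pfister's Theorem~\ref{thm:witt-2-torsion} (via the level of $F$ being a power of $2$) implies that the Witt ring $W(F)$ is $2$-primary torsion; consequently $W(F)[\tfrac{1}{2}] = 0$. Invoking Morel's Theorem~\ref{thm:morel} would then give
\[ \pi_{0,0}\mathbf{1}[\tfrac{1}{\eta},\tfrac{1}{2}] \;\cong\; W(F)[\tfrac{1}{2}] \;=\; 0. \]

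To conclude, I would exploit that $\mathbf{1}[\tfrac{1}{\eta},\tfrac{1}{2}]$ is a commutative motivic ring spectrum, since inverting $\eta$ and $2$ are smashing localizations that preserve the ring structure on $\mathbf{1}$ in $\mathbf{SH}(F)$. Its unit $u\colon \mathbf{1} \to \mathbf{1}[\tfrac{1}{\eta},\tfrac{1}{2}]$ represents the identity class $1 \in \pi_{0,0}\mathbf{1}[\tfrac{1}{\eta},\tfrac{1}{2}]$, and vanishing of this group forces $u$ to be null-homotopic. The standard unit-axiom factorization $\mathrm{id} = \mu \circ (u \wedge \mathrm{id})$ then displays the identity self-map of $\mathbf{1}[\tfrac{1}{\eta},\tfrac{1}{2}]$ as null-homotopic, whence the spectrum is contractible.

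I do not see a genuine obstacle: each step is either already assembled in the paper or is classical Witt-ring folklore. The only point worth emphasizing is the final implication $\pi_{0,0}R = 0 \Rightarrow R \simeq \ast$, which requires the homotopy ring structure on $R = \mathbf{1}[\tfrac{1}{\eta},\tfrac{1}{2}]$ in an essential way; without it, one would at best deduce vanishing of the single bidegree $(0,0)$.
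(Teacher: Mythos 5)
Your proof is correct, and it is essentially one of the ``simpler'' arguments the paper itself alludes to when it begins its own proof with the parenthetical remark that simpler ones exist. The two routes differ genuinely. The paper's argument routes through the $\mathsf{cKW}$-resolution of Section~\ref{sec:an-adams-resolution}: it observes that $\mathsf{cKW}[\tfrac{1}{2}]$ has trivial homotopy sheaves by Scharlau, so the fiber $\mathsf{C}$ of $\mathbf{1}[\tfrac{1}{\eta}]\to\mathsf{cKW}$ satisfies $\mathsf{C}\wedge\mathsf{C}[\tfrac{1}{2}]\simeq\mathsf{C}[\tfrac{1}{2}]$, and then a connectivity bootstrap (using Corollary~\ref{cor:morel} that $\mathsf{C}$ is $1$-connective, hence $\mathsf{C}\wedge\mathsf{C}$ is $2$-connective, hence $\mathsf{C}[\tfrac{1}{2}]$ is $2$-connective, and so on) forces $\mathsf{C}[\tfrac{1}{2}]\simeq\ast$; this is chosen presumably to advertise the $\mathsf{cKW}$-resolution machinery that drives the main theorem. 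Your argument is more elementary and self-contained: Morel's Theorem~\ref{thm:morel} gives $\pi_{0,0}\mathbf{1}[\tfrac{1}{\eta},\tfrac{1}{2}]\cong W(F)[\tfrac{1}{2}]$ (using compactness of $\mathbf{1}$ to pass $\pi_{0,0}$ through the telescope inverting $2$), the Pfister level theorem gives $W(F)[\tfrac{1}{2}]=0$ for a non-formally-real field, and then the ring structure on $\mathbf{1}[\tfrac{1}{\eta},\tfrac{1}{2}]$ promotes $1=0$ in $\pi_{0,0}$ to $\mathrm{id}\simeq 0$. Both proofs rest on the same input, namely that the Witt ring of a non-formally-real field is $2$-primary torsion, but you avoid the connectivity apparatus entirely. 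One small attribution nit: the fact you need is that $W(F)$ is $2$-primary torsion for non-formally-real $F$, cited in the paper as \cite[Theorem 2.7.1]{scharlau} in the remark following the theorem; Theorem~\ref{thm:witt-2-torsion} in the paper is the separate Pfister statement that $W(F)[\tfrac{1}{2}]$ is torsion-\emph{free}, which you do not actually use. You flag this correctly by saying ``companion,'' but it is worth citing the right theorem.
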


\begin{proof}
  One argument (there are simpler ones)
  uses the beginning of the $\mathsf{cKW}$-resolution of 
  $\mathbf{1}[\tfrac{1}{\eta}]$ sketched in Section~\ref{sec:an-adams-resolution}.
  The motivic spectrum $\mathsf{cKW}[\tfrac{1}{2}]$ has trivial
  homotopy sheaves by \cite[Theorem 2.6.4]{scharlau}. 
  Hence it is contractible, and so is the motivic spectrum
  $\mathsf{C}\wedge \mathsf{cKW}[\tfrac{1}{2}]$, where $\mathsf{C}$ is the homotopy fiber
  of $\mathbf{1}[\tfrac{1}{\eta}] \to \mathsf{cKW}$ described in Section~\ref{sec:an-adams-resolution}.
  It follows that the canonical map
  $\mathsf{C}\wedge \mathsf{C}[\tfrac{1}{2}] \to \mathsf{C}[\tfrac{1}{2}]$
  is a weak equivalence. However, since $\mathsf{C}$ is 1-connective by
  Morel's Theorem~\ref{cor:morel}, 
  $\mathsf{C}\wedge \mathsf{C}$ is 2-connective.
  Hence $\mathsf{C}[\tfrac{1}{2}]$ is 2-connective, so 
  $\mathsf{C}\wedge \mathsf{C}[\tfrac{1}{2}]$ is 4-connective, and so on. 
  Thus $\mathsf{C}[\frac{1}{2}]$ is contractible, which implies the
  same for $\mathbf{1}[\tfrac{1}{\eta},\tfrac{1}{2}]$.
\end{proof}

Theorem~\ref{thm:eta-sphere-odd-char} holds
for any field whose Witt group is 
(necessarily 2-primary) torsion. This class of fields
coincides with the class of non formally real 
fields \cite[Theorem 2.7.1]{scharlau}. 

\section{Witt cooperations} 
\label{sec:oper-high-witt}

All statements on Witt cooperations will
be deduced from the rational case \cite[Theorem 10.2]{ananyevskiy.coop}
and the following topological result.

\begin{theorem}[Mahowald, Kane, Lellmann]\label{thm:real-coop}
  For every prime $p$, there exists a sequence $B(j)$ of connective 
  topological spectra starting with $S^0$, 
  a sequence $s_j$ 
  of natural numbers with $s_j\geq 4j$, 
  and a $p$-local equivalence
  \begin{equation}\label{eq:real-coop} 
    \gamma\colon \bigvee_{j\geq 0} 
    \Sigma^{s_j}B(j)\wedge \mathsf{ko} \to \mathsf{ko}\wedge \mathsf{ko} 
  \end{equation}
  of topological spectra.
\end{theorem}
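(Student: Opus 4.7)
The plan is to handle the two cases $p=2$ and $p$ odd separately, since the structure of connective real $K$-theory is quite different in each case. In both cases the strategy is to produce first a splitting of $H_\ast(\mathsf{ko}\wedge \mathsf{ko};\mathbb{F}_p)$ as a comodule over the dual Steenrod algebra, and then to lift this algebraic splitting to a splitting of spectra by realizing each summand as $\Sigma^{s_j}B(j)\wedge \mathsf{ko}$ for explicit connective spectra $B(j)$ of increasing connectivity.

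For $p$ odd, the first step is to invoke the $p$-local splitting $\mathsf{ko}_{(p)} \simeq \bigvee \Sigma^{4k} \ell_{(p)}$ coming from the Adams idempotent in $\mathsf{ku}_{(p)}^{\wedge}$ and the equivalence $\mathsf{ko}_{(p)}\simeq (\mathsf{ku}_{(p)})^{hC_2}$, so that the problem reduces to the analogous splitting of $\ell\wedge \ell$, which is well-understood via the Adams--Priddy uniqueness theorem and the description of the dual Steenrod algebra modulo the odd primary summand. Here the $B(j)$ are simply wedges of mod-$p$ Moore spectra and spheres with the required connectivity, and the map $\gamma$ is assembled from multiplication with Bott-like classes in the appropriate degrees. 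The verification that $\gamma$ is a $p$-local equivalence amounts to checking it on mod-$p$ homology, where it becomes the standard splitting of polynomial algebras.

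For $p=2$ the proof is substantially deeper and follows Mahowald's original construction. The input is the classical identification
\[ H_\ast(\mathsf{ko};\mathbb{F}_2) = \mathcal{A}\!/\!/\mathcal{A}(1) \]
as an $\mathcal{A}$-module, so that $H_\ast(\mathsf{ko}\wedge \mathsf{ko};\mathbb{F}_2) \cong \mathcal{A}\!/\!/\mathcal{A}(1) \otimes \mathcal{A}\!/\!/\mathcal{A}(1)$. The second tensor factor then admits a change-of-rings decomposition into the Brown--Gitler $\mathcal{A}(1)$-submodules $M(j)$, each concentrated in degrees $\geq 4j$. The spectra $B(j)$ are taken to be integral Brown--Gitler spectra, characterized by having mod-$2$ homology precisely $M(j)$; their existence and connectivity $s_j\geq 4j$ are the content of the Mahowald--Kane construction, with Lellmann providing the refined multiplicative structure needed to assemble the comparison map. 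The map $\gamma$ is then built cell-by-cell from the Brown--Gitler spectra into $\mathsf{ko}\wedge \mathsf{ko}$ using the $\mathsf{ko}$-module structure on the target, and the fact that $\gamma$ is a $2$-local equivalence follows by checking on mod-$2$ homology, where it recovers the $\mathcal{A}(1)$-decomposition above, and by a connectivity argument using that both sides are $2$-local connective spectra of finite type.

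The main obstacle is the construction of the Brown--Gitler spectra $B(j)$ and of the comparison map $\gamma$ itself at $p=2$: producing $B(j)$ with the correct connectivity and cohomology requires nontrivial obstruction theory in the stable Adams spectral sequence, and building $\gamma$ from these pieces requires control over higher multiplicative structure on $\mathsf{ko}\wedge \mathsf{ko}$. Rather than reproduce this machinery here, the plan is to cite the theorems of Mahowald, Kane, and Lellmann which carry out exactly this construction; the role of the present result is only to record the statement in a form convenient for transfer to connective Witt theory via the real Betti realization established in Section~\ref{sec:witt-theory}.
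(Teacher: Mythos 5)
Your plan ultimately resolves, exactly as the paper's proof does, into a citation of Mahowald (for $p=2$), Kane, and Lellmann (for odd $p$); the paper's ``proof'' is precisely such a citation, with pointers to Milgram and Davis for additional background, so the approach is the same and the proposal is acceptable. Two small remarks on your sketch: the expository effort you spend on the $p=2$ Brown--Gitler machinery is unnecessary for the present purposes, since the paper explicitly notes that the $p=2$ case is irrelevant for the arguments that follow (only odd primes are used in Corollary~\ref{cor:ckt-coop}); and you write $H_\ast(\mathsf{ko};\mathbb{F}_2)=\mathcal{A}/\!/\mathcal{A}(1)$ ``as an $\mathcal{A}$-module'', which conflates the cohomological statement $H^\ast(\mathsf{ko};\mathbb{F}_2)\cong\mathcal{A}/\!/\mathcal{A}(1)$ (a left $\mathcal{A}$-module) with its dual $H_\ast(\mathsf{ko};\mathbb{F}_2)$, which is a comodule over the dual Steenrod algebra $\mathcal{A}_\ast$. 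Neither slip affects the validity of the plan, since the final step is to delegate to the cited theorems.
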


\begin{proof}
  This follows from \cite{mahowald.bo} for the prime $2$ (which is
  irrelevant for the following arguments), \cite{kane},
  and \cite{lellmann} for odd primes. 
  See also \cite{milgram.bo} and \cite{davis.bo}.
\end{proof}

Theorem~\ref{thm:real-coop} can be reinterpreted as follows.
Fix an odd prime. Consider the motivic spectrum 
$\Sigma^{s_j,0}B(j)\wedge \mathsf{cKW}[\tfrac{1}{2}]$
over the real numbers. Here the smash product with the topological
spectrum $B(j)$ can be interpreted in two equivalent ways. One way is to view 
the category of
motivic (symmetric) spectra as enriched over the category of
usual (symmetric) spectra, and to use an appropriate simplicial version of
$B(j)$. Another way, to be pursued here, 
is to consider $B(j)$ as a constant presheaf of usual
$S^1$-spectra, having a motivic suspension spectrum with the same notation. 
The real Betti realization of 
$\Sigma^{s_j,0}B(j)\wedge \mathsf{cKW}[\tfrac{1}{2}]$ is
\[ \mathbb{R}^\ast\bigl(\Sigma^{s_j,0}B(j)\wedge \mathsf{cKW}[\tfrac{1}{2}]\bigr)
\simeq
\mathbb{R}^\ast\Sigma^{s_j,0}B(j)\wedge \mathbb{R}^\ast\mathsf{cKW}[\tfrac{1}{2}])\simeq \Sigma^{s_j}B(j)\wedge \mathsf{ko}[\tfrac{1}{2}]\]
by Theorem~\ref{thm:real-points} and Corollary~\ref{cor:real-points-ckt}.
Composition with $\gamma$ from~(\ref{eq:real-coop}) yields
a map 
\begin{equation}\label{eq:2} 
\mathbb{R}^\ast\bigl(\Sigma^{s_j,0}B(j)\wedge \mathsf{cKW}[\tfrac{1}{2}]\bigr)
\to 
\mathsf{ko}\wedge \mathsf{ko}[\tfrac{1}{2}] 
\xrightarrow{\simeq} 
\mathbb{R}^\ast\bigl(\mathsf{cKW} \wedge \mathsf{cKW}[\tfrac{1}{2}]\bigr)
\end{equation}
whose adjoint is a map
$\Sigma^{s_j,0}B(j)\wedge \mathsf{cKW}[\tfrac{1}{2}] \to \mathbb{R}_\ast\mathbb{R}^\ast\bigl (\mathsf{cKW}\wedge \mathsf{cKW}[\tfrac{1}{2}]\bigr).$
Let 
\[
\gamma^\flat\colon
\Sigma^{s_j,0}B(j)\wedge \mathsf{cKW}[\tfrac{1}{2}] \to \mathbb{R}_\ast\mathbb{R}^\ast\bigl (\mathsf{cKW}\wedge \mathsf{cKW}[\tfrac{1}{2}]\bigr)
\simeq 
\mathsf{cKW}\wedge \mathbb{R}_\ast \mathbb{R}^\ast\bigl(\mathsf{cKW}[\tfrac{1}{2}]\bigr)
\simeq
\mathsf{cKW}\wedge \mathsf{cKW}[\tfrac{1}{2}]
\]
be the composition of this adjoint map, the
equivalence occurring in the projection formula~(\ref{eq:real-points})
for the real Betti realization displayed in Theorem~\ref{thm:real-points}, 
and finally the equivalence
$\mathsf{cKW}[\tfrac{1}{2}]\to \mathbb{R}_\ast\mathbb{R}^\ast(\mathsf{cKW}[\tfrac{1}{2}])$ from
Corollary~\ref{cor:real-points-ckt}.

\begin{corollary}\label{cor:ckt-coop}
  Let $p$ be an odd prime. The map 
  \[ \gamma^\flat\colon \bigvee_{j\geq 0} \Sigma^{s_j,0}B(j)\wedge \mathsf{cKW}[\tfrac{1}{2}] \to \mathsf{cKW} \wedge \mathsf{cKW}[\tfrac{1}{2}] \]
  induced by the 
  $p$-local equivalence~(\ref{eq:real-coop})
  is a $p$-local 
  equivalence in 
  $\mathbf{SH}(\mathbb{R})$.
\end{corollary}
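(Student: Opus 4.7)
The plan is to reduce the motivic claim to the topological one of Theorem~\ref{thm:real-coop} via the real Betti realization adjunction, exploiting that both source and target of $\gamma^\flat$ are $\mathsf{cKW}[\tfrac{1}{2}]$-modules of the particularly convenient shape $\mathsf{D}\wedge\mathsf{cKW}[\tfrac{1}{2}]$. First I would unwind the construction of $\gamma^\flat$: by the triangle identity for the $(\mathbb{R}^\ast,\mathbb{R}_\ast)$-adjunction, applying $\mathbb{R}^\ast$ to $\gamma^\flat$ and composing with the counit $\mathbb{R}^\ast\mathbb{R}_\ast\to\mathrm{id}$ returns the map~(\ref{eq:2}), which by construction is $\gamma$ up to the canonical equivalences furnished by Theorem~\ref{thm:real-points} and Corollary~\ref{cor:real-points-ckt}. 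Hence $\mathbb{R}^\ast\gamma^\flat$ is a $p$-local equivalence of topological spectra by Theorem~\ref{thm:real-coop}.

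Next, both the source $\bigvee_{j}\Sigma^{s_j,0}B(j)\wedge\mathsf{cKW}[\tfrac{1}{2}]$ and the target $\mathsf{cKW}\wedge\mathsf{cKW}[\tfrac{1}{2}]$ of $\gamma^\flat$ have the form $\mathsf{D}\wedge\mathsf{cKW}[\tfrac{1}{2}]$ for some $\mathsf{D}\in\mathbf{SH}(\mathbb{R})$ (namely $\mathsf{D}=\bigvee_j \Sigma^{s_j,0}B(j)$ on the source and $\mathsf{D}=\mathsf{cKW}$ on the target). Combining the projection formula~(\ref{eq:real-points}) of Theorem~\ref{thm:real-points} with Corollary~\ref{cor:real-points-ckt} gives a chain of equivalences
\[ \mathbb{R}_\ast\mathbb{R}^\ast(\mathsf{D}\wedge\mathsf{cKW}[\tfrac{1}{2}])\simeq \mathsf{D}\wedge\mathbb{R}_\ast\mathbb{R}^\ast(\mathsf{cKW}[\tfrac{1}{2}])\simeq \mathsf{D}\wedge\mathsf{cKW}[\tfrac{1}{2}], \]
so the unit of the $(\mathbb{R}^\ast,\mathbb{R}_\ast)$-adjunction is an equivalence on both sides of $\gamma^\flat$. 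By naturality, $\gamma^\flat$ is identified with $\mathbb{R}_\ast\mathbb{R}^\ast\gamma^\flat$ under these equivalences.

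Finally, taking $\mathsf{D}=\mathbf{1}[\tfrac{1}{p}]$ in~(\ref{eq:real-points}) shows that $\mathbb{R}_\ast$ commutes with inverting~$p$, hence preserves $p$-local equivalences. Therefore $\mathbb{R}_\ast\mathbb{R}^\ast\gamma^\flat$ is a $p$-local equivalence, and the conclusion follows. The main conceptual point I expect to need care is the explicit identification of $\mathbb{R}^\ast\gamma^\flat$ with $\gamma$ under the established equivalences; the remainder is formal manipulation of the adjunction, and no serious technical obstacle is anticipated.
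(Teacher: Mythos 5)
Your proof is correct and rests on the same three ingredients as the paper's: the projection formula of Theorem~\ref{thm:real-points} combined with Corollary~\ref{cor:real-points-ckt} to show the unit of $(\mathbb{R}^\ast,\mathbb{R}_\ast)$ is an equivalence on any spectrum of the form $\mathsf{D}\wedge\mathsf{cKW}[\tfrac{1}{2}]$; the topological $p$-local equivalence from Theorem~\ref{thm:real-coop}; and preservation of $p$-local equivalences by $\mathbb{R}_\ast$. The organization differs in one small but worth-noting respect. You reduce $\gamma^\flat$ to $\mathbb{R}_\ast\mathbb{R}^\ast\gamma^\flat$, so you need the unit to be an equivalence on both source and target; moreover the deduction in your first paragraph that $\mathbb{R}^\ast\gamma^\flat$ is a $p$-local equivalence implicitly relies on the counit at $\mathbb{R}^\ast\bigl(\mathsf{cKW}\wedge\mathsf{cKW}[\tfrac{1}{2}]\bigr)$ being an equivalence, a fact that only becomes available once you have established the target-side unit equivalence in your second paragraph, so the order of the steps should be adjusted slightly. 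The paper sidesteps this by writing the adjoint of~(\ref{eq:2}) via the formula ``$\mathbb{R}_\ast$ applied to~(\ref{eq:2}), precomposed with the unit on the source,'' so only the source-side unit equivalence is needed and the counit never appears. Your observation that $\mathbb{R}_\ast$ commutes with inverting $p$ by taking $\mathsf{D}=\mathbf{1}[\tfrac{1}{p}]$ in the projection formula is a cleaner justification than the paper's appeal to descent of the adjunction to $p$-local categories.
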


\begin{proof}
  The adjunction $(\mathbb{R}^\ast,\mathbb{R}_\ast)$ of integral stable
  homotopy categories descends to the ``same'' adjunction of
  $p$-local stable homotopy categories. 
  Theorem~\ref{thm:real-coop} implies that the map~(\ref{eq:2})
  is a $p$-local equivalence. Its adjoint 
  $\Sigma^{s_j,0}B(j)\wedge \mathsf{cKW}[\tfrac{1}{2}] \to \mathbb{R}_\ast\mathbb{R}^\ast\bigl (\mathsf{cKW}\wedge \mathsf{cKW}[\tfrac{1}{2}]\bigr)$
  is computed as the image of~(\ref{eq:2}) under $\mathbb{R}_\ast$,
  composed with the unit of the adjunction. Since $\mathbb{R}_\ast$
  preserves $p$-local equivalences, it suffices to show
  that the unit
  \[ \Sigma^{s_j,0}B(j)\wedge \mathsf{cKW}[\tfrac{1}{2}] \to 
  \mathbb{R}_\ast \mathbb{R}^\ast \Sigma^{s_j,0}B(j)\wedge \mathsf{cKW}[\tfrac{1}{2}] \]
  is a $p$-local equivalence. However, it is in fact an equivalence,
  by Theorem~\ref{cor:real-points-ckt} and the projection formula from
  Theorem~\ref{thm:real-points}.
\end{proof}

Corollary~\ref{cor:ckt-coop} implies the same 
$p$-local equivalence
over any real closed field. 

\begin{theorem}\label{thm:ckt-coop-conc}
  Let $F$ be a field of characteristic zero.
  Then the map 
  \[ cu\wedge \mathsf{cKW}[\tfrac{1}{2}]\colon \mathsf{cKW}[\tfrac{1}{2}] \to \mathsf{cKW} \wedge \mathsf{cKW}[\tfrac{1}{2}]\]
  is 3-connective. In particular, $\underline{\pi}_{s,t}\mathsf{cKW}\wedge \mathsf{cKW}[\tfrac{1}{2}]$
  is trivial for $1\leq s-t\leq 3$.
\end{theorem}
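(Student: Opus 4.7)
The plan is to establish the theorem by assembling prime-local information: the rational case is handled directly from Ananyevskiy's rational splitting, while for each odd prime $p$ the $p$-local statement is extracted from the real closed field case via real étale descent. Since $\mathsf{cKW}[\tfrac{1}{2}]$ already has $2$ inverted, the integral statement follows from the rational statement and the $p$-local statements for all odd $p$. Throughout, I would work with the cofiber of $cu\wedge\mathsf{cKW}[\tfrac{1}{2}]$ and aim to show it is in fact $4$-connective (which is stronger than needed and matches $s_j\geq 4j\geq 4$ for $j\geq 1$); the claim on $\underline{\pi}_{s,t}$ in the range $1\leq s-t\leq 3$ then follows because $cu\wedge\mathsf{cKW}[\tfrac{1}{2}]$ is split by the multiplication $\mathsf{cKW}\wedge\mathsf{cKW}\to\mathsf{cKW}$, identifying the target as a direct sum of $\mathsf{cKW}[\tfrac{1}{2}]$ and the cofiber, combined with the concentration of $\mathsf{cKW}$ in degrees divisible by $4$.

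For the rational part, I would invoke \cite[Theorem 10.2]{ananyevskiy.coop}, which gives a wedge decomposition of $\mathsf{cKW}_\mathbb{Q}\wedge\mathsf{cKW}_\mathbb{Q}$ in terms of rational $\mathsf{cKW}$-modules analogous to~(\ref{eq:real-coop}), with the lowest-dimensional summand being $\mathsf{cKW}_\mathbb{Q}$ itself hit by $cu\wedge\mathsf{cKW}_\mathbb{Q}$ and all other summands $s_j$-connective with $s_j\geq 4$. For the $p$-local part with $p$ odd, the key is Proposition~\ref{prop:ckw-modules-real-etale}: since both $\mathsf{cKW}[\tfrac{1}{2}]$ and $\mathsf{cKW}\wedge\mathsf{cKW}[\tfrac{1}{2}]$ are $\mathsf{cKW}[\tfrac{1}{2}]$-modules, their Nisnevich-fibrant models are automatically real étale fibrant, so the Nisnevich homotopy sheaves of the cofiber agree with its real étale ones. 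Connectivity can then be checked on real étale stalks, which are henselian local rings with real closed residue fields. Base change compatibility for $\mathsf{cKW}$ (Theorem~\ref{thm:ckt-base} together with Corollary~\ref{cor:kt-cellular}) reduces the question further to real closed fields $K$, and over such $K$ the remark following Corollary~\ref{cor:ckt-coop} supplies the $p$-local wedge decomposition of $\mathsf{cKW}\wedge\mathsf{cKW}[\tfrac{1}{2}]$ in which the $j=0$ summand is precisely the image of $cu\wedge\mathsf{cKW}[\tfrac{1}{2}]$ and the remaining summands are $s_j$-connective with $s_j\geq 4$.

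The main obstacle I expect is the real étale descent step: one must justify that connectivity of the cofiber in the Nisnevich $t$-structure may genuinely be tested on real étale stalks, and that passing from a real closed residue field to the ambient henselian local ring preserves the relevant connectivity estimate. This uses in an essential way the characteristic zero hypothesis, because Proposition~\ref{prop:ckw-modules-real-etale} is stated in that setting; it also uses continuity of $\mathsf{cKW}$ under the filtered systems computing the real étale stalks, which follows from the base change and cellularity statements of Theorem~\ref{thm:ckt-base}. Once the real étale reduction is in place, everything reduces to the explicit topological input Theorem~\ref{thm:real-coop}, transported through real Betti realization by Theorem~\ref{thm:real-points} and Corollary~\ref{cor:real-points-ckt}, and the remaining step is routine bookkeeping of the connectivities $s_j\geq 4j$.
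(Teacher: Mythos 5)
Your strategy is essentially the paper's: real \'etale descent via Proposition~\ref{prop:ckw-modules-real-etale} (and \cite[Theorem 6.6]{ksw} for the source) reduces the connectivity question to the value of the homotopy sheaves at real \'etale stalks, where the rational decomposition of Witt cooperations handles the torsion-free part and Corollary~\ref{cor:ckt-coop} handles each odd prime. However, the step you correctly flag as the main obstacle is not resolved by the tools you cite. Having reduced to a real \'etale stalk --- a henselian local ring $R$ with real closed residue field $k$ --- you still need to pass from $R$ to $k$ so that Corollary~\ref{cor:ckt-coop} applies. Continuity (commutation with filtered colimits, which follows from compactness/cellularity) handles the passage from finite-type base rings up to the pro-object $R$, and base change for $\mathsf{cKW}$ identifies the motivic spectrum after pullback along $\operatorname{Spec}(k)\to\operatorname{Spec}(R)$, but neither of these compares the value of $\underline{\pi}_{s,t}$ at $R$ with its value at $k$: the residue field is a quotient of $R$, not a filtered colimit over which $R$ is the limit, so continuity says nothing here.

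What is actually needed is the rigidity property of Witt groups: for a henselian local ring $R$ with $2$ invertible, the map $W(R)\to W(k)$ to the residue field is an isomorphism. This is \cite[Satz 3.3]{knebusch} and is exactly what the paper invokes at this point, using that the relevant homotopy sheaves are built from (de)suspensions of the Witt sheaf. With that supplied, the rest of your argument goes through: \cite[Theorem 10.2]{ananyevskiy.coop} (the paper uses the more specific \cite[Corollary 3.5]{alp} and \cite[Theorem 3.4]{alp} to the same effect, and these hold over any characteristic-zero field, so you do not actually need the real \'etale reduction for the rational part) gives the $4$-connectivity of the rational cofiber, and Corollary~\ref{cor:ckt-coop} takes care of the $p$-torsion for each odd $p$, after which the splitting of $cu\wedge\mathsf{cKW}[\tfrac{1}{2}]$ by multiplication and the $4$-fold periodicity of $\mathsf{cKW}$ in the range $s-t\leq 3$ yield the stated vanishing.
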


\begin{proof}
  The source $\mathsf{cKW}[\tfrac{1}{2}]$ 
  of the map $cu\wedge \mathsf{cKW}[\tfrac{1}{2}]$
  satisfies real \'etale descent by \cite[Theorem 6.6]{ksw},
  and its target does so by Proposition~\ref{prop:ckw-modules-real-etale}.
  The statement follows by proving that the induced map
  \begin{equation}\label{eq:homotopy-sheaves}
    \underline{\pi}_{s,t} \mathsf{cKW}[\tfrac{1}{2}] \to \underline{\pi}_{s,t} \mathsf{cKW} \wedge \mathsf{cKW}[\tfrac{1}{2}] 
  \end{equation}
  on homotopy sheaves for the real \'etale topology
  is an isomorphism for all $s-t\leq 3$.
  Hence
  it suffices to prove that (\ref{eq:homotopy-sheaves})
  is an isomorphism for all $s-t\leq 3$ after evaluating
  at a henselian local ring with real closed residue field.
  The rigidity property of Witt groups \cite[Satz 3.3]{knebusch}
  provides that it
  suffices to evaluate (\ref{eq:homotopy-sheaves}) at
  real closed fields. 

  The main results 
  of \cite{alp} imply that $cu\wedge \mathsf{cKW}_{\mathbb{Q}}\colon \mathsf{cKW}_\mathbb{Q} \to 
  \mathsf{cKW} \wedge \mathsf{cKW}_\mathbb{Q}$
  is 3-connective. More precisely,
  \cite[Corollary 3.5]{alp} and \cite[Theorem 3.4]{alp}
  imply that 
  $\mathsf{cKW}_{\mathbb{Q}}\simeq \bigvee_{n\geq 0} \Sigma^{8n,4n}
  \mathbf{1}[\tfrac{1}{\eta}]_{\mathbb{Q}}$
  whence 
  \[ \mathsf{cKW}_{\mathbb{Q}}\wedge \mathsf{cKW}_{\mathbb{Q}} 
  \simeq \bigvee_{n\geq 0} \Sigma^{8n,4n} \vee_{j=0}^n 
  \mathbf{1}[\tfrac{1}{\eta}]_{\mathbb{Q}}. \]
  In particular,
  $\underline{\pi}_{s,t}\mathsf{cKW}\wedge \mathsf{cKW}_\mathbb{Q}$
  is trivial for $1\leq s-t\leq 3$. Hence it suffices 
  to show that $p$-torsion vanishes in that range one odd prime 
  $p$ at a time. But this is an immediate consequence of 
  Corollary~\ref{cor:ckt-coop}
  which holds for real closed fields.
\end{proof}

More precise information on Witt cooperations, at
least after inverting 2, can be deduced from
knowledge about the topological spectra $B(j)$.
They are given as Thom spectra of maps from even parts 
of the May filtration on the homotopy fiber of 
$\Omega^2 S^3\to S^1$ to $\mathsf{KO}$;
see \cite[Section 2]{mahowald.bo} for details.

\section{The vanishing}
\label{sec:vanishing}

As mentioned already in Section~\ref{section:introduction},
considering connective Witt theory with 2 inverted suffices
due to knowledge on $(2,\eta)$-completed groups from~\cite{rso.sphere}, 
which coincide with $2$-completed groups under the following
circumstances.

\begin{theorem}[Hu-Kriz-Ormsby]
\label{thm:hu-kriz-ormsby}
  Let $F$ be a field of finite virtual 2-cohomological
  dimension and of characteristic not two. Then
  the canonical map $\mathbf{1}^\wedge_2\to \mathbf{1}^\wedge_{2,\eta}$
  is an equivalence.
\end{theorem}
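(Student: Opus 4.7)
The plan is to prove that $\mathbf{1}^\wedge_2$ is already derived $\eta$-complete, equivalently that the canonical map $\mathbf{1}^\wedge_2 \to \mathbf{1}^\wedge_{2,\eta}$ is an isomorphism on all bi-graded homotopy groups. The natural strategy goes through the mod-$2$ motivic Adams spectral sequence, whose abutment is a priori the $(2,\eta)$-completion; the task is to upgrade convergence to the plain $2$-completion using the finite virtual $2$-cohomological dimension of $F$.

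First, I would build the mod-$2$ Adams tower based on the motivic Eilenberg--MacLane spectrum $H\mathbb{F}_2$ and identify its $E_2$-page as
\[
E_2^{s,t,w} = \operatorname{Ext}^{s,t,w}_{\mathcal{A}^{\ast,\ast}}\bigl(H^{\ast,\ast}(F;\mathbb{F}_2),\, H^{\ast,\ast}(F;\mathbb{F}_2)\bigr) \Longrightarrow \pi_{t-s,w}\mathbf{1}^\wedge_{2,\eta}.
\]
The algebraic Hopf map $\eta$ is detected in Adams filtration one by a class $h_1$ that is not nilpotent on $E_2$, which is precisely the reason the natural target of this spectral sequence is the $(2,\eta)$-completion and not the plain $2$-completion.

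Second, I would exploit the Voevodsky--Rost theorem (the Milnor conjecture) to identify the diagonal motivic cohomology $H^{p,p}(F;\mathbb{F}_2)$ with Galois cohomology $H^p(F;\mathbb{F}_2)$, which under the finite $\mathrm{vcd}_2$ hypothesis vanishes for $p$ above some fixed $d$. Combined with the Beilinson--Lichtenbaum vanishing $H^{p,q}(F;\mathbb{F}_2) = 0$ for $p > q$, this produces a sharp vanishing region for the motivic mod-$2$ cohomology of the base. Propagating this bound through the motivic Steenrod algebra and the cobar complex should yield a horizontal vanishing line on $E_2$, and hence on $E_\infty$, bounding Adams filtration uniformly in each stem and weight.

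Third, such a horizontal vanishing line implies strong convergence: the Adams tower filtering $\mathbf{1}^\wedge_2$ stabilizes in each bidegree after finitely many stages, so the $(2,\eta)$-completion agrees with the $2$-completion and the desired equivalence follows. The main obstacle, and the technical heart of the argument, is ensuring that the bound coming from $\mathrm{vcd}_2(F)$ propagates uniformly across all motivic weights; the virtual (as opposed to plain) $2$-cohomological dimension is what absorbs the contribution from real places, and the delicate point is verifying that the $h_1$-towers in $\operatorname{Ext}$ are truncated in a weight-uniform way so that the vanishing line is genuinely horizontal rather than merely of bounded slope in $(s,w)$.
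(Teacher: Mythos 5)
The paper does not actually reprove this theorem: its proof is a two-sentence citation. It quotes \cite[Theorem 1]{hu-kriz-ormsby} verbatim for fields of characteristic zero, and then observes that the only input missing to run the Hu--Kriz--Ormsby argument in odd characteristic is a finite-type cell presentation of $\mathsf{M}\mathbb{Z}/2$, which is supplied by \cite[Proposition 3.32]{rso.sphere}. Your proposal instead attempts to reconstruct Hu--Kriz--Ormsby's proof from scratch via the motivic Adams spectral sequence, and never touches the positive-characteristic case at all --- which is precisely the only new content the paper contributes here. So you have solved a harder problem than the paper actually poses, while skipping the part the paper does address.

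As a reconstruction of the Hu--Kriz--Ormsby argument, your sketch is in the right neighborhood but the central claim is too optimistic as stated. You assert a horizontal vanishing line on $E_2$ ``bounding Adams filtration uniformly in each stem and weight.'' But $h_1$ is genuinely non-nilpotent on $E_2$ and the $h_1$-towers are not truncated: over any field of characteristic not two, Morel's theorem gives $\pi_{n,n}\mathbf{1}[\tfrac{1}{\eta}] \cong W(F) \neq 0$, and correspondingly there are infinite $h_1$-towers surviving to $E_\infty$ (this is visible explicitly over $\mathbb{C}$ and $\mathbb{R}$ in the Guillou--Isaksen computations cited in the paper). So the $h_1$-towers are not ``truncated in a weight-uniform way''; they are infinite. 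The actual convergence argument has to cope with these towers rather than rule them out, and it does so by controlling the interaction between the $\eta$-Bockstein filtration and the 2-adic filtration, using finite $\mathrm{vcd}_2$ to bound the behavior of $\tau$ and the Galois cohomology of the base, not by producing a global horizontal vanishing line on $\operatorname{Ext}$. As written, your third step (``such a horizontal vanishing line implies strong convergence'') rests on a premise that fails, and the caveat you append at the end flags the gap without closing it.

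For the result the paper actually needs, you should instead: (i) cite \cite[Theorem 1]{hu-kriz-ormsby} for characteristic zero rather than reproving it; and (ii) explain why the argument extends to odd characteristic, which requires knowing that $\mathsf{M}\mathbb{Z}/2$ is cellular of finite type over such fields so that the Adams tower and its convergence analysis can be set up.
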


\begin{proof}
  The reference~\cite[Theorem 1]{hu-kriz-ormsby} provides this statement
  in the case of fields of characteristic zero. Their proof
  goes through for fields of odd characteristic
  with the amendment that the motivic Eilenberg-MacLane
  spectrum $\mathsf{M} \mathbb{Z}/2$ admits a cell presentation of finite type
  also over fields of characteristic not 
two \cite[Proposition 3.32]{rso.sphere}.
\end{proof}

\begin{lemma}\label{lem:unit-inv-2}
  Let $F$ a field of characteristic
  not two. The canonical map
  \[ \pi_{n+1,n}\mathbf{1} \to \pi_{n+1,n}\mathbf{1}[\tfrac{1}{2}]\]
  is injective for every natural number $n\geq 5$.
\end{lemma}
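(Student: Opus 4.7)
Denote the kernel in question by $K$. Since inverting $2$ kills precisely the $2$-power torsion, $K$ equals the $2$-primary torsion subgroup of $\pi_{n+1,n}\mathbf{1}$; the task is to show this vanishes for $n \geq 5$. My plan is to detect this torsion via $2$-completion, identify the $2$-completion with the $(2,\eta)$-completion using Theorem~\ref{thm:hu-kriz-ormsby}, and then invoke the explicit computations of the first line of the $(2,\eta)$-completed sphere carried out in \cite{rso.sphere}.

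The first step is to bound the exponent of the $2$-primary torsion in $\pi_{n+1,n}\mathbf{1}$, so that this torsion subgroup embeds into $\pi_{n+1,n}\mathbf{1}^\wedge_2$ without an $\mathrm{Ext}$-obstruction to detection. This boundedness should follow from the slice filtration of the first line, which by \cite[Sections 2--4]{rso.sphere} has only finitely many non-vanishing filtration layers in a fixed bidegree of the first line.

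For fields $F$ of finite virtual $2$-cohomological dimension, Theorem~\ref{thm:hu-kriz-ormsby} then identifies $\pi_{n+1,n}\mathbf{1}^\wedge_2$ with $\pi_{n+1,n}\mathbf{1}^\wedge_{2,\eta}$, and the computations in \cite[Section 5]{rso.sphere} show that the latter group is free of $2$-primary torsion in the range $n \geq 5$, forcing $K = 0$. For fields of infinite virtual $2$-cohomological dimension, I would write $F$ as a filtered colimit of its finitely generated subfields $F_\alpha$, each of which has finite virtual $2$-cohomological dimension, and use continuity of motivic stable homotopy groups along such essentially smooth filtered colimits of base fields to propagate the vanishing of $K$ from the $F_\alpha$ to $F$.

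The main obstacle is the first step: controlling the $2$-primary exponent of $\pi_{n+1,n}\mathbf{1}$ uniformly enough that passing to the $2$-completion loses no information. This requires the full force of the slice analysis of \cite{rso.sphere}, which in turn takes Theorem~\ref{thm:introduction} as an input; the restriction $n \geq 5$ presumably reflects exactly the point at which the slice spectral sequence has collapsed sufficiently for this exponent control, and for the $2$-primary-torsion-free conclusion on the $(2,\eta)$-completed first line, to hold.
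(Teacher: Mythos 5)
Your overall outline (reduce to fields of finite virtual $2$-cohomological dimension, pass to $2$-completion, identify with the $(2,\eta)$-completion via Theorem~\ref{thm:hu-kriz-ormsby}, then appeal to slice computations from \cite{rso.sphere}) matches the broad setup of the paper's proof, but the middle of your argument has two genuine gaps.

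First, the bounded-exponent step is not available by the route you describe. The slice spectral sequence of $\mathbf{1}$ converges (in the sense of \cite[Theorem 3.50]{rso.sphere}) to $\pi_{\ast,\ast}\mathsf{sc}(\mathbf{1})$, the homotopy of the \emph{slice completion}, not to $\pi_{\ast,\ast}\mathbf{1}$ itself. The comparison between $\pi_{n+1,n}\mathbf{1}$ and $\pi_{n+1,n}\mathsf{sc}(\mathbf{1})$ is precisely what the lemma's proof must mediate, so you cannot cite the slice analysis to control the $2$-primary exponent of $\pi_{n+1,n}\mathbf{1}$ without having already solved the problem. Second, your appeal to the computations of \cite[Section~5]{rso.sphere} to conclude that the first line of the $(2,\eta)$-completed sphere is $2$-torsion-free is circular: as stated in the introduction, Theorem~\ref{thm:introduction} --- whose proof rests on this lemma --- is an input to \cite[Section~5]{rso.sphere}. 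You gesture at this dependency at the end, but your plan does not resolve it.

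The paper sidesteps both issues simultaneously. Rather than trying to embed the $2$-torsion subgroup into the $2$-completion (which would need the exponent bound), it runs the arithmetic fracture square for $2$, whose long exact sequence $\pi_{n+2,n}\mathbf{1}^\wedge_2[\tfrac{1}{2}] \to \pi_{n+1,n}\mathbf{1} \to \pi_{n+1,n}\mathbf{1}[\tfrac{1}{2}] \oplus \pi_{n+1,n}\mathbf{1}^\wedge_2$ reduces the injectivity claim to two things: $\pi_{n+2,n}\mathbf{1}^\wedge_2 = 0$ for $n\geq 5$, and the vanishing of the component map $\pi_{n+1,n}\mathbf{1}\to\pi_{n+1,n}\mathbf{1}^\wedge_2$. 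Both are obtained by passing to $\mathsf{sc}(\mathbf{1})$ via Theorem~\ref{thm:hu-kriz-ormsby} and the convergence theorem, and then using only the slice-differential vanishing $\pi_{n+1,n}\mathsf{sc}(\mathbf{1})=0$ for $n\geq 3$, $\pi_{n+2,n}\mathsf{sc}(\mathbf{1})=0$ for $n\geq 5$, which comes from \cite[Lemma 4.1]{rso.sphere} (Section~4, hence not circular), together with a careful Mittag--Leffler analysis of Milnor's $\lim^1$ sequence. Your proposal would need to replace both of its problematic inputs with something along these lines to close the gaps.
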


\begin{proof}
  Since the base field is a filtered colimit
  of fields of finite virtual 2-cohomological dimension,
  and $\pi_{n+1,n}\mathbf{1}$ commutes with filtered colimits
  of fields, one may assume that the base field has
  finite virtual 2-cohomological dimension.
  Consider the arithmetic square for $2$ and $\mathbf{1}$.
  It induces a long exact sequence 
  \[ \dotsm \to \pi_{n+2,n}\mathbf{1}^\wedge_2[\tfrac{1}{2}]
  \to \pi_{n+1,n}\mathbf{1} \to \pi_{n+1,n}\mathbf{1}[\tfrac{1}{2}] \oplus \pi_{n+1,n}\mathbf{1}^\wedge_2 
  \to \pi_{n+1,n}\mathbf{1}^\wedge_2[\tfrac{1}{2}] \to \dotsm \]
  of homotopy groups. The argument below will provide 
  that $\pi_{n+2,n}\mathbf{1}^\wedge_2=0$. Theorem~\ref{thm:hu-kriz-ormsby}
  implies that the natural map
  $\pi_{s,t}\mathbf{1}^\wedge_2\to \pi_{s,t}\mathbf{1}^\wedge_{2,\eta}$
  is an isomorphism for all $s,t$. The
  convergence result \cite[Theorem 3.50]{rso.sphere}
  supplies a weak equivalence
  $\mathsf{sc}(\mathbf{1})\simeq \mathbf{1}^{\wedge}_{\eta}$ between the
  $\eta$-completion and the completion with respect to
  the slice filtration of $\mathbf{1}$, hence
  an isomorphism $\pi_{s,t}\mathbf{1}^\wedge_{2,\eta} \cong 
  \pi_{s,t}(\mathsf{sc}\mathbf{1})^\wedge_{2}$ for all $s,t$.
  Consider Milnor's derived limit exact sequence:
  \begin{equation}\label{eq:milnor-seq} 
    0 \to {\lim_{k}}^1 \pi_{s+1,t} (\mathsf{sc}\mathbf{1})/2^k 
    \to \pi_{s,t}(\mathsf{sc}\mathbf{1})^\wedge_{2} 
    \to \lim_k\pi_{s,t} (\mathsf{sc}\mathbf{1})/2^k \to 0 
  \end{equation}
  The canonical map $(\mathsf{sc}\mathbf{1})/2^k \to \mathsf{sc}(\mathbf{1}/2^k)$
  is an equivalence. As stated in the beginning of~\cite[Section 5]{rso.sphere},
  the computation of the first slice differential for the motivic sphere
  spectrum~\cite[Lemma 4.1]{rso.sphere} implies that
  $\pi_{n+1,n}\mathsf{sc}(\mathbf{1}) = 0$ for $n\geq 3$ and
  $\pi_{n+2,n}\mathsf{sc}(\mathbf{1}) = 0$ for $n\geq 5$. The long exact
  sequence of homotopy groups hence forces 
  \[ \pi_{n+2,n}\mathsf{sc}(\mathbf{1})/2^k = 0 \quad  \mathrm{for\ }
  n\geq 5 \mathrm{\ and\ }  k\geq 1 \]
  which -- together with Milnor's short exact 
  sequence~(\ref{eq:milnor-seq}) --  implies the isomorphisms
  \[ {\lim_{k}}^1 \pi_{n+3,n} (\mathsf{sc}\mathbf{1})/2^k 
  \xrightarrow{\cong} \pi_{n+2,n}(\mathsf{sc}\mathbf{1})^\wedge_{2} 
  \quad \mathrm{and}
  \quad \pi_{n+1,n}(\mathsf{sc}\mathbf{1})^\wedge_{2} 
  \xrightarrow{\cong} \lim_k\pi_{n+1,n} (\mathsf{sc}\mathbf{1})/2^k. \]
  It also implies that the canonical map 
  $\pi_{n+3,n}(\mathsf{sc}\mathbf{1})\to
  \pi_{n+3,n}(\mathsf{sc}\mathbf{1})/2^k$
  is surjective for all $n\geq 5$ and all $k\geq 1$. 
  Hence the canonical map 
  $\pi_{n+3,n}(\mathsf{sc}\mathbf{1})/2^{k+1}   
  \to  \pi_{n+3,n}(\mathsf{sc}\mathbf{1})/2^{k}$
  is surjective for all $k\geq 1$ by the commutative diagram
  \[ \xymatrix{
    \pi_{n+3,n}(\mathsf{sc}\mathbf{1})\ar[r] \ar[d]_-{\mathrm{id}} &
    \pi_{n+3,n}(\mathsf{sc}\mathbf{1})/2^{k+1} \ar[d] \\
    \pi_{n+3,n}(\mathsf{sc}\mathbf{1})\ar[r]  &
    \pi_{n+3,n}(\mathsf{sc}\mathbf{1})/2^{k}.}\]
  In particular,  
  \[ 0={\lim_{k}}^1 \pi_{n+3,n} (\mathsf{sc}\mathbf{1})/2^k 
  = \pi_{n+2,n}(\mathsf{sc}\mathbf{1})^\wedge_{2} = 
  \pi_{n+2,n}\mathbf{1}^\wedge_{2}\] for all $n\geq 5$.
  Thus the map 
  \[ \pi_{n+1,n}\mathbf{1} \to \pi_{n+1,n}\mathbf{1}[\tfrac{1}{2}] \oplus \pi_{n+1,n}\mathbf{1}^\wedge_2 \]
  is injective for all $n\geq 5$. It remains to observe that the
  map $\pi_{n+1,n}\mathbf{1} \to  \pi_{n+1,n}\mathbf{1}^\wedge_2$ is
  the zero map for all $n\geq 5$. In fact, the isomorphisms
  $\pi_{n+1,n}\mathbf{1}^\wedge_2 \cong \pi_{n+1,n}\mathsf{sc}(\mathbf{1})^\wedge_2
  \cong \lim_k\pi_{n+1,n} (\mathsf{sc}\mathbf{1})/2^k$ explained above
  show that it suffices to prove the triviality of the canonical
  map $\pi_{n+1,n}\mathbf{1} \to  \pi_{n+1,n}\mathsf{sc}(\mathbf{1})/2^k$
  for all $n\geq 5$ and all $k\geq 1$, which again follows from the
  vanishing of $\pi_{n+2,n}\mathsf{sc}(\mathbf{1})$ for all $n\geq 5$.
\end{proof}

The statement in Lemma~\ref{lem:unit-inv-2} is not optimal. The 
computation \cite[Theorem 5.5]{rso.sphere} which is based on
Theorem~\ref{thm:vanishing} below implies that $\pi_{n+1,n}\mathbf{1}=0$
for $n\geq 3$. The map 
$ \pi_{3,2}\mathbf{1} \to \pi_{3,2}\mathbf{1}[\tfrac{1}{2}]$
is not injective; up to isomorphism it is the projection
$\mathbb{Z}/24\to \mathbb{Z}/3$ by \cite[Theorem 5.5]{rso.sphere}.

\begin{theorem}\label{thm:vanishing}
  Let $F$ be a field of characteristic not two. Then
  $\underline{\pi}_{n+1,n}\mathbf{1}[\tfrac{1}{\eta}] = \underline{\pi}_{n+2,n}\mathbf{1}[\tfrac{1}{\eta}] = 0$ for every integer $n$.
\end{theorem}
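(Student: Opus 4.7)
The plan is to assemble the vanishing from two separate pieces -- a 2-inverted vanishing and a 2-completed vanishing -- glued by the arithmetic fracture square at the prime 2. A useful preliminary observation is that $\eta\colon S^{1,1}\to S^{0,0}$ is invertible in $\mathbf{1}[\tfrac{1}{\eta}]$, so multiplication by powers of $\eta$ yields isomorphisms $\underline{\pi}_{n+1,n}\mathbf{1}[\tfrac{1}{\eta}] \cong \underline{\pi}_{n+k+1,n+k}\mathbf{1}[\tfrac{1}{\eta}]$ for all integers $k$ (and analogously on the $n+2$ line). It therefore suffices to prove the vanishing for $n$ in any single residue class, in particular for sufficiently large $n$. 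Sheaf vanishing may be checked on stalks, and by a filtered colimit argument (homotopy groups of a fixed motivic spectrum commute with filtered colimits of fields) one may further restrict to fields of finite virtual 2-cohomological dimension.

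For the 2-inverted vanishing I distinguish by characteristic. When $F$ has odd characteristic, Theorem~\ref{thm:eta-sphere-odd-char} already gives contractibility of $\mathbf{1}[\tfrac{1}{\eta},\tfrac{1}{2}]$, hence the vanishing. When $F$ has characteristic zero, the cofiber sequence arguments underlying Propositions~\ref{prop:coop-1} and~\ref{prop:1implies2} still apply after smashing throughout with the Moore spectrum for $\mathbb{Z}[\tfrac{1}{2}]$: they yield identifications $\underline{\pi}_{n+2,n}(\mathsf{cKW}\wedge\mathsf{cKW})[\tfrac{1}{2}]\cong \underline{\pi}_{n+1,n}\mathbf{1}[\tfrac{1}{\eta},\tfrac{1}{2}]$ and, bootstrapped from the first, $\underline{\pi}_{n+3,n}(\mathsf{cKW}\wedge\mathsf{cKW})[\tfrac{1}{2}]\cong \underline{\pi}_{n+2,n}\mathbf{1}[\tfrac{1}{\eta},\tfrac{1}{2}]$. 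Both sources vanish by Theorem~\ref{thm:ckt-coop-conc}, which guarantees $\underline{\pi}_{s,t}(\mathsf{cKW}\wedge\mathsf{cKW})[\tfrac{1}{2}]=0$ for $1\le s-t\le 3$.

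For the 2-completed vanishing I invoke Theorem~\ref{thm:hu-kriz-ormsby} to identify $\mathbf{1}^\wedge_2$ with the $(2,\eta)$-completion and thus to express $\pi_{\ast,\ast}\mathbf{1}[\tfrac{1}{\eta}]^\wedge_2$ via the slice completion $\mathsf{sc}(\mathbf{1})$. The vanishing of the low-degree slice groups computed in \cite[Sections 2--4]{rso.sphere}, together with Milnor's derived limit exact sequence as exploited in the proof of Lemma~\ref{lem:unit-inv-2}, gives $\pi_{n+1,n}\mathbf{1}[\tfrac{1}{\eta}]^\wedge_2 = \pi_{n+2,n}\mathbf{1}[\tfrac{1}{\eta}]^\wedge_2 = 0$ for all sufficiently large $n$, which is all that is needed thanks to $\eta$-periodicity.

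Assembling via the arithmetic fracture square at the prime 2 then yields the integral statement: the long exact sequence
\[ \cdots \to \pi_{s+1,t}\mathbf{1}[\tfrac{1}{\eta}]^\wedge_2[\tfrac{1}{2}] \to \pi_{s,t}\mathbf{1}[\tfrac{1}{\eta}] \to \pi_{s,t}\mathbf{1}[\tfrac{1}{\eta},\tfrac{1}{2}] \oplus \pi_{s,t}\mathbf{1}[\tfrac{1}{\eta}]^\wedge_2 \to \cdots \]
has both neighbouring terms zero in the bidegrees of interest, forcing the middle term to vanish. The main obstacle is the 2-completed step: one must carefully verify that the computations imported from \cite{rso.sphere} do not circularly depend on Theorem~\ref{thm:introduction}, a point reassured by the remark that only Sections 2--4 of \cite{rso.sphere} enter the proof, whereas Theorem~\ref{thm:introduction} is used only in Section~5 there.
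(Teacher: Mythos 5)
Your treatment of the $2$-inverted part (characteristic dichotomy via Theorem~\ref{thm:eta-sphere-odd-char}, then Propositions~\ref{prop:coop-1} and~\ref{prop:1implies2} together with Theorem~\ref{thm:ckt-coop-conc} in characteristic zero) matches the paper. The genuine gap is in your $2$-completed step, and it is not a small one: you propose to ``express $\pi_{\ast,\ast}\mathbf{1}[\tfrac{1}{\eta}]^\wedge_2$ via the slice completion $\mathsf{sc}(\mathbf{1})$'' by way of Theorem~\ref{thm:hu-kriz-ormsby}, but these tools are intrinsically about the sphere $\mathbf{1}$ and become vacuous after inverting $\eta$. Concretely, since $\eta$ acts invertibly on $\mathbf{1}[\tfrac{1}{\eta}]$, every cofiber $\mathbf{1}[\tfrac{1}{\eta}]/\eta^m$ is contractible, hence $\mathbf{1}[\tfrac{1}{\eta}]^\wedge_{(2,\eta)}=0$; likewise $\mathsf{sc}(\mathbf{1})\simeq \mathbf{1}^\wedge_\eta$ has trivial $\eta$-inversion. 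So neither Hu--Kriz--Ormsby nor the slice convergence result gives any handle on $\mathbf{1}[\tfrac{1}{\eta}]^\wedge_2$, and indeed over $\mathbb{C}$ Theorem~\ref{thm:gi-am} shows that $\mathbf{1}[\tfrac{1}{\eta}]$ is a nonzero $2$-complete spectrum with large homotopy, so $\mathbf{1}[\tfrac{1}{\eta}]^\wedge_2$ is emphatically not computed by the $(2,\eta)$-completion of the $\eta$-inverted sphere. There is also a commutation issue you are silently relying on: $\pi_{s,t}\mathbf{1}[\tfrac{1}{\eta}]$ is the colimit of $\pi_{s+k,t+k}\mathbf{1}$, and $2$-completion does not commute with filtered colimits, so a vanishing of $\pi_{s+k,t+k}\mathbf{1}^\wedge_2$ for $k\gg 0$ would not directly yield $\pi_{s,t}\mathbf{1}[\tfrac{1}{\eta}]^\wedge_2=0$ even if you had it.

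The paper takes a different, and more careful, route precisely to avoid this trap. It does not try to control $\mathbf{1}[\tfrac{1}{\eta}]^\wedge_2$ at all. Instead, Lemma~\ref{lem:unit-inv-2} establishes, \emph{before} inverting $\eta$, that the canonical map $\pi_{n+1,n}\mathbf{1}\to\pi_{n+1,n}\mathbf{1}[\tfrac{1}{2}]$ is injective for $n\geq 5$. Here the fracture square is applied to $\mathbf{1}$ itself, where Hu--Kriz--Ormsby and the slice completion are actually applicable, and the input from \cite[Sections 2--4]{rso.sphere} is the vanishing of $\pi_{n+1,n}\mathsf{sc}(\mathbf{1})$ for $n\geq 3$ and of $\pi_{n+2,n}\mathsf{sc}(\mathbf{1})$ for $n\geq 5$, fed into Milnor's exact sequence to kill $\pi_{n+2,n}\mathbf{1}^\wedge_2$. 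Injectivity is then preserved under the filtered colimit defining $\eta$-inversion, which (together with $\eta$-periodicity of $\mathbf{1}[\tfrac{1}{\eta}]$ to dispose of small $n$) reduces the integral statement to the $2$-inverted one. If you want to keep your fracture-square structure, you must similarly apply it to $\mathbf{1}$ rather than $\mathbf{1}[\tfrac{1}{\eta}]$, and pass to the $\eta$-colimit only at the level of the resulting injectivity statement.
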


\begin{proof}
  Lemma~\ref{lem:unit-inv-2} (which holds on the level
  of homotopy sheaves, for example by \cite[Theorem 2.7]{hoyois}) shows that it suffices to
  prove the statement for $\mathbf{1}[\tfrac{1}{\eta},\tfrac{1}{2}]$.
  Thanks to Theorem~\ref{thm:eta-sphere-odd-char},
  it is enough to consider fields of characteristic zero.
  Proposition~\ref{prop:coop-1} implies the isomorphism
  \[ \underline{\pi}_{n+1,n} \mathbf{1}[\tfrac{1}{\eta},\tfrac{1}{2}] \cong \underline{\pi}_{n+2,n}\mathsf{cKW}\wedge \mathsf{cKW}[\tfrac{1}{2}] \]
  for every integer $n$. Hence this sheaf of homotopy groups 
  vanishes by Theorem~\ref{thm:ckt-coop-conc}.
  This in turn shows that $\mathbf{1}[\tfrac{1}{\eta}] \to \mathsf{cKW}$ 
  is $2$-connective,
  whence Proposition~\ref{prop:1implies2} implies an isomorphism
  \[ \underline{\pi}_{n+2,n} \mathbf{1}[\tfrac{1}{\eta},\tfrac{1}{2}] \cong \underline{\pi}_{n+3,n}\mathsf{cKW}\wedge \mathsf{cKW}[\tfrac{1}{2}] \]
  for every integer $n$.
  The latter sheaf again vanishes by Theorem~\ref{thm:ckt-coop-conc}.
\end{proof}

Theorem~\ref{thm:vanishing} shows that Theorem~\ref{thm:ckt-coop-conc}
already holds before inverting two. The vanishing of 
homotopy sheaves in Theorem~\ref{thm:vanishing}
implies the vanishing of the homotopy groups
\[ {\pi}_{n+1,n}\mathbf{1}[\tfrac{1}{\eta}] = {\pi}_{n+2,n}\mathbf{1}[\tfrac{1}{\eta}] = 0\] for all integers $n$ stated in Theorem~\ref{thm:introduction}.

\myaddress{Institute of Mathematics, University of Osnabr\"uck, Osnabr\"uck, Germany}
\myemail{oliver.roendigs@uni-osnabrueck.de}


\begin{thebibliography}{99}
\bibitem{ananyevskiy.coop}
A.~Ananyevskiy.
\newblock Stable operations and cooperations in derived {W}itt theory with
  rational coefficients.
\newblock {\em {Ann.~K-Theory}}, 2(4):517--560, 2017.

\bibitem{alp}
A.~Ananyevskiy, M.~Levine, and I.~Panin.
\newblock Witt sheaves and the $\eta$-inverted sphere spectrum.
\newblock {\em J.~Topology}, 10(2):370--385, 2017.

\bibitem{andrews-miller}
M.~Andrews and H.~Miller.
\newblock Inverting the {H}opf map.
\newblock {\em J.~Topology}, 10(4):1145--1168, 2017.

\bibitem{ayoub.galois-1}
J.~{Ayoub}.
\newblock {L'alg\`ebre de Hopf et le groupe de Galois motiviques d'un corps de
  caract\'eristique nulle. I.}
\newblock {\em {J. Reine Angew. Math.}}, 693:1--149, 2014.

\bibitem{bachmann.conservative}
T.~Bachmann.
\newblock On the conservativity of some functors in motivic homotopy theory.
\newblock {\tt arXiv:1506.07375v2}.

\bibitem{bachmann.real}
T.~Bachmann.
\newblock Motivic and real {\'e}tale stable homotopy theory.
\newblock {\tt arXiv:1608.08855}.

\bibitem{davis.bo}
D.~M. Davis.
\newblock Odd primary {$b{\rm o}$}-resolutions and {$K$}-theory localization.
\newblock {\em Illinois J. Math.}, 30(1):79--100, 1986.

\bibitem{dugger-isaksen.cell}
D.~Dugger and D.~C. Isaksen.
\newblock Motivic cell structures.
\newblock {\em Algebr. Geom. Topol.}, 5:615--652, 2005.

\bibitem{fhm}
H.~Fausk, P.~Hu, and J.~P.~May.
\newblock Isomorphisms between left and right adjoints.
\newblock {\em Theory Appl. Categ.}, 11(4):107--131, 2003.

\bibitem{guillou-isaksen}
B.~J. Guillou and D.~C. Isaksen.
\newblock The {$\eta$}-local motivic sphere.
\newblock {\em J. Pure Appl. Algebra}, 219(10):4728--4756, 2015.

\bibitem{guillou-isaksen.real}
B.~J. Guillou and D.~C. Isaksen.
\newblock The {$\eta$}-inverted {$\mathbb{R}$}-motivic sphere.
\newblock {\em Algebr. Geom. Topol.}, 16(5):3005--3027, 2016.

\bibitem{grso}
J.~J. Guti{\'e}rrez, O.~R{\"o}ndigs, M.~Spitzweck, and P.~A. {\O}stv{\ae}r.
\newblock Motivic slices and coloured operads.
\newblock {\em J. Topology}, 5(3):727--755, 2012.

\bibitem{ho}
J.~Heller and K.~Ormsby.
\newblock Galois equivariance and stable motivic homotopy theory.
\newblock {\em {Trans. Am. Math. Soc.}}, 368(11):8047--8077, 2016.

\bibitem{hornbostel.nil}
J.~Hornbostel.
\newblock Some comments on motivic nilpotence.
\newblock With an appendix by M.~Zibrowius. 
{\em {Trans. Am. Math. Soc.}}, 
{\tt https://doi.org/10.1090/tran/7324}.

\bibitem{hornbostel.hermitian}
J.~Hornbostel.
\newblock {$\mathbb{A}\sp 1$}-representability of hermitian {$K$}-theory and
  {W}itt groups.
\newblock {\em Topology}, 44(3):661--687, 2005.

\bibitem{hoyois}
M.~Hoyois.
\newblock From algebraic cobordism to motivic cohomology.
\newblock {\em J. Reine Angew. Math.}, 702:173--226, 2015.

\bibitem{hu-kriz-ormsby}
P.~Hu, I.~Kriz, and K.~Ormsby.
\newblock Convergence of the motivic {A}dams spectral sequence.
\newblock {\em J. K-Theory}, 7(3):573--596, 2011.

\bibitem{kane}
R.~M.~Kane.
\newblock Operations in connective {$K$}-theory.
\newblock {\em Mem. Amer. Math. Soc.}, 34(254):vi+102, 1981.

\bibitem{ksw}
M.~Karoubi, M.~Schlichting, and C.~Weibel.
\newblock The {W}itt groups of real algebraic varieties.
\newblock {\em J. Topology}, 9(4):1257--1302, 2016.

\bibitem{knebusch}
M.~Knebusch.
\newblock Isometrien \"uber semilokalen {R}ingen.
\newblock {\em Math. Z.},{108:255--268}, 1969.

\bibitem{lellmann}
W.~Lellmann.
\newblock Operations and co-operations in odd-primary connective {$K$}-theory.
\newblock {\em J. London Math. Soc. (2)}, 29(3):562--576, 1984.

\bibitem{mahowald.bo}
M.~Mahowald.
\newblock {$b{\rm o}$}-resolutions.
\newblock {\em Pacific J. Math.}, 92(2):365--383, 1981.

\bibitem{milgram.bo}
R.~J.~Milgram.
\newblock The {S}teenrod algebra and its dual for connective {$K$}-theory.
\newblock In {\em Conference on homotopy theory ({E}vanston, {I}ll., 1974)},
  volume~1 of {\em Notas Mat. Simpos.}, pages 127--158. Soc. Mat. Mexicana,
  M\'exico, 1975.

\bibitem{morel.pi0}
Fabien {Morel}.
\newblock {On the motivic $\pi_0$ of the sphere spectrum.}
\newblock In {\em {Axiomatic, enriched and motivic homotopy theory. Proceedings
  of the NATO Advanced Study Institute, Cambridge, UK, September 9--20, 2002}},
  pages 219--260. Kluwer Academic Publishers, 2004.

\bibitem{MorelICM2006}
F.~Morel.
\newblock {$\mathbb{A}^1$}-algebraic topology.
\newblock In {\em International {C}ongress of {M}athematicians. {V}ol. {II}},
  pages 1035--1059. Eur. Math. Soc., Z\"urich, 2006.

\bibitem{morel.field}
F.~Morel.
\newblock {\em {$\mathbb{A}^1$}-algebraic topology over a field}, volume 2052
  of {\em Lecture Notes in Mathematics}.
\newblock Springer, Heidelberg, 2012.

\bibitem{mv}
F.~{Morel} and V.~{Voevodsky}.
\newblock {$\mathbb A^1$-homotopy theory of schemes.}
\newblock {\em {Publ. Math., Inst. Hautes \'Etud. Sci.}}, 90:45--143, 1999.

\bibitem{paninwalterBO}
I.~Panin and C.~Walter.
\newblock On the motivic commutative ring spectrum {$BO$}.
\newblock {\tt arXiv:1101.0650}.

\bibitem{roendigs-oestvaer.hermitian}
O.~R{\"o}ndigs and P.~A. {\O}stv{\ae}r.
\newblock Slices of hermitian ${K}$-theory and {M}ilnor's conjecture on
  quadratic forms.
\newblock {\em Geometry \& Topology\/} 20:1157--1212, 2016.

\bibitem{rso.kqcell}
O.~R{\"o}ndigs, M.~Spitzweck, and P.~A. {\O}stv{\ae}r.
\newblock Cellularity of hermitian ${K}$-theory and {W}itt theory.
\newblock To appear in the Proceedings of the International
Colloquium on $K$-Theory, TIFR (2016), {\tt arXiv:1603.05139}.

\bibitem{rso.sphere}
O.~R{\"o}ndigs, M.~Spitzweck, and P.~A. {\O}stv{\ae}r.
\newblock The first motivic stable homotopy groups of spheres.
\newblock {\tt arXiv:1604.00365}.

\bibitem{scharlau}
W.~Scharlau.
\newblock {\em Quadratic and {H}ermitian forms}, volume 270 of {\em Grundlehren
  der Mathematischen Wissenschaften}.
\newblock Springer-Verlag, Berlin, 1985.

\bibitem{schlichting}
M.~Schlichting.
\newblock Hermitian ${K}$-theory, derived equivalences, and {K}aroubi's
  fundamental theorem.
\newblock {\em J. Pure Appl. Algebra}, 221(7):1729--1844, 2017.

\bibitem{spivakovsky.smoothing}
M.~Spivakovsky.
\newblock A new proof of {D}. {P}opescu's theorem on smoothing of ring
  homomorphisms.
\newblock {\em J. Amer. Math. Soc.}, 12(2):381--444, 1999.

\end{thebibliography}
\end{document}